\documentclass[12pt, twoside]{article}
\usepackage[final]{graphicx}
\usepackage{amsmath}
\usepackage{amsthm}
\usepackage{amssymb}
\usepackage{hyperref}
\usepackage{mathtools}
\usepackage{comment}
\usepackage{subfigure}
\usepackage{times}

\pagestyle{myheadings}
\markboth{C. M. Elliott and P. J. Herbert}{Membrane mediated point particle interactions on near spherical biomembranes}

\usepackage{verbatim}
\theoremstyle{definition}
\newtheorem{theorem}{Theorem}[section]
\newtheorem{corollary}[theorem]{Corollary}
\newtheorem{lemma}[theorem]{Lemma}
\newtheorem{proposition}[theorem]{Proposition}
\newtheorem{definition}[theorem]{Definition}
\newtheorem{remark}[theorem]{Remark}
\newtheorem{problem}[theorem]{Problem}
\newtheorem{assumption}[theorem]{Assumption}
\newtheorem{example}[theorem]{Example}

\numberwithin{equation}{section}

\frenchspacing

\textwidth=167mm
\textheight=23cm
\parindent=16pt
\oddsidemargin=-0.5cm
\evensidemargin=-0.5cm
\topmargin=-0.5cm

\newcommand{\subjclass}[1]{\bigskip\noindent\emph{2010 Mathematics Subject Classification:}\enspace#1}
\newcommand{\keywords}[1]{\noindent\emph{Keywords:}\enspace#1}

\newcommand{\Div}{{\rm div}}
\newcommand{\curl}{{\rm curl}}
\newcommand{\R}{\mathbb{R}}
\newcommand{\g}{{\Gamma_0}}

\DeclareMathOperator*{\argmin}{arg\,min}
\newcommand{\divg}{{\rm div}_\Gamma}
\newcommand{\id}[1]{\,{\rm id}_{#1}\,}
\newcommand{\D}[1]{\underline{D}_{#1}}

\newcommand{\cee}{\mathcal{C}}
\newcommand{\dee}{{\rm d}}
\newcommand{\Jstar}{J^*}
\newcommand{\Estar}{\mathcal{E}^*}
\newcommand{\I}{{\rm I}}

\begin{document}
\baselineskip=17pt
\title{A formula for membrane mediated point particle interactions on near spherical biomembranes}

\author{Charles M. Elliott\\
Mathematics Institute, Zeeman Building, University of Warwick,\\Coventry, CV4 7AL. UK\\
c.m.elliott@warwick.ac.uk\\
\\
Philip J. Herbert\\
Mathematics Institute, Zeeman Building, University of Warwick,\\Coventry, CV4 7AL. UK\\
p.j.herbert@warwick.ac.uk
}
\date{}

\maketitle
\begin{abstract}
	We consider a model of a biomembrane with attached proteins. The membrane is represented by a near spherical continuous surface and
	attached proteins are described as discrete rigid structures which attach to the membrane at a finite number of points. The resulting surface 
	 minimises a quadratic elastic energy (obtained by a perturbation of the Canham-Helfrich energy) subject to the point constraints which are imposed by the attachment of the proteins.  
	 We calculate the derivative of the  energy  with respect to protein configurations. The proteins are constrained to  
	 move tangentially by translation and by rotation  in the axis normal to a reference point. Previous studies have typically restricted themselves to a nearly flat membrane and circular inclusions.
A  numerically accessible representation of this derivative is derived and employed in some numerical experiments.

\subjclass{35J35, 
26B05, 
65N30 
}

\keywords{
Membrane-mediated interaction, Canham-Helfrich,
surface PDE,
point Dirichlet constraints,
mixed finite elements,
domain mapping}
\end{abstract}
\section{Introduction}

The morphology of cell membranes and a variety of functions are well-known to be regulated 
by the interplay between surface proteins  and the curvature of the membrane. 
Biological membranes are composed of a lipid bilayer, this layer is believed to act like a 
fluid in the lateral direction and elastically in the normal direction.
This means that in principle, any proteins which may be embedded into or attached to the 
surface of the membrane may move freely.
This means that not only can the proteins influence the shape of the membrane, 
but also the protein interaction will be  membrane mediated.

Indeed, although direct protein-protein interactions are important,
\cite{GouBruPin93} demonstrated that the long range interactions are predominantly membrane mediated.
An overview of membrane mediated interactions is given in \cite{BitConFou19}.
An assumption of symmetry of the protein inclusion allows for either analytic representation or approximation by an asymptotic expansion of the interactions \cite{KimNeuOst98,WeiKozHel98,DomFou02,YolHauDes14,FouGal15}.
Frequently the studies of these interactions were restricted to a nearly flat membrane with circular or single point inclusions.
It is known that the shape of the inclusion has a significant impact on the interaction \cite{KimNeuOst00}.
In the recent work of \cite{SchKoz15}, they consider a near spherical membrane which is deformed by particles which attach along segments of an ellipsoid or hyperbolid and in \cite{GraKie17}, they consider arbitrary, sufficiently regular, particle inclusions on a flat membrane.
Recent work has looked at shape formation of multiple smaller particles into larger structures \cite{Wei18,Gov18}.
The article \cite{DhaPer20} considers generic elastic energies on a manifold with embedded point particles which have a given interaction potential.
A variational formulation for equilibria of the surface and particle system is presented, along a discretisation.
Numerical validations are given, in particular, a Helfrich problem is presented.
We further note the work of \cite{ButNaz11} which considers point constraints in a Kirchoff plate, this bears a striking similarity to the biological problems of optimising the locations of constraints with respect to the an elastic membrane energy.

It is widely accepted in the literature that the near stationary 
state of lipid membranes are minimisers of the Canham-Helfrich energy \cite{Can70,Hel73},
\begin{equation}\label{eq:CH}
	\int_\mathcal{M} \left(\frac{\kappa}{2}(H-c_0)^2 + \sigma + \kappa_G K \right){\rm d} \mathcal{M}.
\end{equation}
Where the membrane is assumed to be thin and well modelled by a 2-dimensional surface $\mathcal{M}$, 
with the quantities $\kappa>0$, $\kappa_G\in \R$ are the bending rigidities associated to the mean and 
Gauss curvature respectively and $\sigma\geq 0$ is the surface tension.
For the principle curvatures of $\mathcal{M}$, $\kappa_1,\kappa_2$ we take $H:= \kappa_1+\kappa_2$ 
to be 2 times the usual value of the mean curvature and $K:= \kappa_1 \kappa_2$ the typical 
Gauss curvature.
The value $c_0\in \R$ is the spontaneous curvature, this corresponds to a mis-match between the inner and outer layers of the membrane, for example differing lipid composition.

We make some simplifying assumptions. The first is to set  $c_0 =0$, 
corresponding  to a physical assumption that the mismatch between the layers is rather small.
Another assumption is to neglect the  Gauss curvature term. This may  be justified by taking the rigidity $\kappa_G$ to be  constant and applying  the Gauss-Bonnet theorem, which states that 
when $\mathcal{M}$ is closed, the quantity $\int_\mathcal{M} K$ depends only on the Euler characteristic of $\mathcal{M}$. As we are considering a fixed topology of near-spherical membranes, we may ignore  this constant.
This leads to
\begin{equation}\label{eq:CHSimplified}
	\int_\mathcal{M} \left(\frac{\kappa}{2}H^2 + \sigma \right) {\rm d}\mathcal{M}.
\end{equation}

It is natural to introduce a volume constraint corresponding  to the membrane being 
impermeable and the fluid contained within the membrane being incompressible.
Indeed, without the volume constraint, it is known \eqref{eq:CHSimplified} is bounded below by $8\pi\kappa$ \cite{Wil65}
and the degenerate sequence $\mathbb{S}^2\left(0,\frac{1}{n}\right)$ for $n\to \infty$ is a minimising sequence.
Further to this, we are interested in constraining $\mathcal{M}$ to contain a set of points, 
this corresponds to a protein in a fixed location being attached to the membrane.

We assume that the attached proteins are rigid, that is to say they do not bend and can 
only move by translations or rotations.
It is of clear interest to consider the force that the membrane exerts on these attached proteins.
This is relevant to, say, calculate locally minimising configuration of multiple proteins via a 
gradient flow, to estimate statistical quantities using over-damped Langevin 
Dynamics \cite[Section 2.2.2]{LelRouSto10} or as a step for a full model for the problem of particles in membranes.
For further details on estimation of the  free energy of a particle  membrane, see \cite{Kie19}.

The derivative of the energy with respect to particle location is calculated as a shape derivative in \cite{EllGraHob16}, and appears by use of a 
pull back method in \cite{GraKie17}, both in the case of large particles on a nearly flat membrane.
We will follow many of the ideas of this second work, making use of methods from \cite{ChuDjuEll19} 
to deal with the fact we are on a surface rather than a flat domain.

One motivation for constructing a formula for the membrane mediated particle interactions may be seen from the following example.
For $\bar{\mathcal{E}}(p)$ the total energy of the particle system (the membrane energy with electrostatic interaction) in configuration $
p$, one might be interested in finding $p^*$ such that $\bar{\mathcal{E}}(p^*)$ is minimal.
One may choose to do this with a gradient descent algorithm
in which an update step might be:
\[
	p_{n+1} = p_{n} - \alpha_n \nabla_p \bar{\mathcal{E}} (p_n),
\]
for some $\alpha_n>0$ which may depend on $n$.
Clearly one may approximate the derivative $\nabla_p \bar{\mathcal{E}} (p_n)$ by taking a difference quotient.  
However this will be expensive, as  one would require solving $3 N +1$ linear systems - the system associated to the state $p_n$ and the $3 N$ directions that $\nabla_p$ corresponds to.
With the explicit formula we find, the algorithm to construct the gradient would require solving $1$ linear system and evaluating $3 N$ functionals, where these functionals 
are relatively cheap to evaluate  compared to a linear solve for a fourth order PDE.

\subsection{Outline}
The quadratic energy
approximating  the general  Canham-Helfrich energy \eqref{eq:CHSimplified}
is presented in Section \ref{sec:Membrane+Particle} 
along with   precise definitions and notation for the attachments of  particles to the membrane.
The formula for the derivative of the minimising energy with respect to the location of the particles is    derived  in Section \ref{sec:CalculateDerivative}. 
Some numerical examples are presented in  Section \ref{sec:NumericalAnalysis}. 
In a finite element setting we calculate and compare   derivatives using the  formula and a difference quotient of the energies for comparison.

\subsection{Surface PDE preliminaries}
For completion, we now provide several definitions and results on the topic of surface PDEs 
which we will later need, the results may be found in \cite{DziEll13}.
For $\Gamma$ a closed sufficiently smooth  hypersurface in $\R^3$, 
there is a bounded domain $\Omega \subset \R^3$ such that $\Gamma = \partial \Omega$.
The unit normal to $\Gamma$, $\nu$, that points away from $\Omega$ is called the outwards unit normal.
Define $P_\Gamma:= \I - \nu \otimes \nu$ on $\Gamma$ to be, at each point $x \in \Gamma$, the projection onto 
the tangent space at that point, $T_x\Gamma$, where $\I$ is the identity matrix and for $a,b\in \R^n$, $a\otimes b:=a b^T$.
For a differentiable function $f$ on $\Gamma$, we define the tangential gradient
\[
	\nabla_\Gamma f:= P_\Gamma \nabla \tilde{f},
\]
where $\tilde{f}$ is a differentiable extension of $f$ to an open neighbourhood of $\Gamma$ in $\R^3$.
Here $\nabla$ is the standard derivative on $\R^3$.
Lemma 2.4 of \cite{DziEll13} shows this definition is independent of the choice of extension $\tilde{f}$.
The components of the tangential gradient are denoted
\[
	\underline{D}_i f := (\nabla_\Gamma f)_i.
\]
The map $\mathcal{H}:=\nabla_\Gamma \nu$ is called the extended Weingarten map and is symmetric with 
zero eigenvalue in the normal direction.
The mean curvature $H$ is given as the trace of $\mathcal{H}$.
For a twice differentiable function, the Laplace-Beltrami operator is defined to be
\[
	\Delta_\Gamma f := \nabla_\Gamma \cdot \nabla_\Gamma f = \sum_{i=1}^{3}\underline{D}_i \underline{D}_i f.
\]
We write $D_\Gamma^2 f$ to be the surface Hessian and Lemma 2.6 in 
\cite{DziEll13} shows that the surface Hessian is, in general, not symmetric with the relation
\begin{equation}\label{eq:SurfaceCommutate}
	\underline{D}_i \underline{D}_j f - \underline{D}_j \underline{D}_i f
	=
	(\mathcal{H}\nabla_\Gamma f)_j \nu_i
	-
	(\mathcal{H}\nabla_\Gamma f)_i \nu_j.
\end{equation}
It is well-known \cite[Lemma 2.8]{DziEll13} that there is a small neighbourhood 
around $\Gamma$ of 
width $\delta>0$, $\mathcal{N}_\delta$, and 
maps $d\colon \mathcal{N}_\delta \to \R$, the oriented distance function, and $\pi\colon \mathcal{N}_\delta\to \Gamma$, the closest point projection,
such that for any $\tilde{X} \in \mathcal{N}_\delta$ we may uniquely decompose 
\begin{equation}\label{eq:decompose}
	\tilde{X} = \pi(\tilde{X}) + d(\tilde{X})\nu(\pi(\tilde{X})).
\end{equation}

\section{Membrane and particle model}\label{sec:Membrane+Particle}
We begin with the deformation model for  the membrane along with model for the 
particles and their attachment  to the membrane.

\subsection{Membrane model}
We now fix $\Gamma:= \mathbb{S}^2(0,R)$ to be  the 2-sphere of radius $R$, for a given $R>0$.
In light of this, we see that for $X \in \R^3 \setminus\{0\}$, $\pi(X)= R \frac{X}{|X|}$ and $d(X) = |X|-R$.
We are interested in finding a surface which is  a near spherical membrane of the form  
\[
	\mathcal{M}(v):= \left\{ x+ \rho v(x) \nu(x) : x \in \Gamma\right\}.
\]
 where $\rho$ is small and $v$ is  sufficiently smooth. Thus  $\mathcal M(v)$ is a graph over $\Gamma$. 
We use the following energy:
\begin{equation}\label{eq:QuadraticSurfaceEnergy}
	J(v) := \frac{1}{2}\int_{\Gamma} \kappa(\Delta_\Gamma v)^2 + \left( \sigma - \frac{2\kappa}{R^2} \right)|\nabla_\Gamma v|^2 - \frac{2\sigma}{R^2}v^2
\end{equation}
derived by \cite{EllFriHob17}.
It is seen for $\int_\Gamma v = 0$ that $J(v)$ is the first non-trivial term of the Taylor expansion in $\rho$ of the Lagrangian induced by the Canham-Helfrich energy for surfaces with enclosed volume constrained to be $\frac{4}{3}\pi R^3$ around the critical point $(\Gamma,-\frac{\sigma}{R})$.
This energy is analogous to the Monge-Gauge for a nearly flat membrane, \cite{EllGraHob16}, which is   formally obtained by  taking the limit $R \to \infty$.

\begin{definition}
We define a bilinear form  $a\colon H^2(\Gamma) \times H^2(\Gamma)\to \R$ to be
	\begin{equation}\label{eq:BilinearForm}
	a(\eta,v):= \int_\Gamma\kappa  \Delta_\Gamma \eta \Delta_\Gamma v + \left(\sigma - \frac{2\kappa}{R^2}\right)\nabla_\Gamma \eta \cdot \nabla_\Gamma v - \frac{2\sigma}{R^2}\eta v ~~ \forall \eta,v \in H^2(\Gamma),
	\end{equation}
	which is the bilinear form given by the first variation of \eqref{eq:QuadraticSurfaceEnergy}.
	We define the space
	\[
		U := \left\{ v \in H^2(\Gamma) : \int_\Gamma v = 0\right\}.
	\]
\end{definition}
\begin{remark}
We note that under the small deformation methodology of {\cite{EllFriHob17}} that one may deal with appropriately small $c_0$ as considered in {\cite{EllHatHer20,EllHat19}}.
\end{remark}

\subsection{An energy  minimising membrane subject to point constraints}
With the above definitions, one may now write the following problem:
\begin{problem}\label{prob:SingleParticleEnergyMinimisation}
Given $Z \in \R^K$ and $\mathcal C=\{X_j\in \Gamma,\, j=1,...,K\}$, find $u \in U$ such that $J(u)$ is minimised subject to $u(X_j) = Z_j$ for $j=1,...,K$.
\end{problem}

This defines $K$  point constraints on $u$  and  is admissible for $u\in H^2(\Gamma)$   because of  the well known 
embedding for 2 dimensions, $H^2(\Gamma)\subset C(\Gamma)$ \cite{AdaFou03}.

We have the following well-posedness and regularity result.
The well-posedness follows from \cite[Theorem 5.1]{EllHer20-A} while the regularity result may be found in Appendix \ref{app:EllReg}.
\begin{theorem}\label{thm:ExistenceRegularity}
	Suppose $K\geq 4$ and the points of $\mathcal{C}$ do not lie in a single plane.
	Then there there is a unique $u\in U$ which solves Problem \ref{prob:SingleParticleEnergyMinimisation}.
	Furthermore, for any $p \in (1,2)$, it holds that $u \in W^{3,p}(\Gamma)$.
\end{theorem}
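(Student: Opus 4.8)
The plan is to get existence and uniqueness from coercivity of $a$ on the constrained space together with the projection theorem, and to get the $W^{3,p}$ regularity from the Euler--Lagrange system by $L^p$ elliptic estimates.

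First I would pin down the degeneracy of $a$. Expanding $v\in U$ in the spherical harmonics $Y_\ell$ on $\mathbb{S}^2(0,R)$, which satisfy $\Delta_\Gamma Y_\ell=-\tfrac{\ell(\ell+1)}{R^2}Y_\ell$, the modes decouple and the $\ell$-th mode contributes to $a(v,v)$ the factor $g(\lambda_\ell)$ with $g(\lambda)=\kappa\lambda^2+\big(\sigma-\tfrac{2\kappa}{R^2}\big)\lambda-\tfrac{2\sigma}{R^2}$ and $\lambda_\ell=\tfrac{\ell(\ell+1)}{R^2}$. One checks $g(\lambda_1)=0$ while $g(\lambda_\ell)>0$ for every $\ell\ge 2$ (the mode $\ell=0$ being excluded by the mean-zero condition defining $U$). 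Hence $a$ is positive semidefinite on $U$ with kernel $N=\mathrm{span}\{x_1|_\Gamma,x_2|_\Gamma,x_3|_\Gamma\}$ and coercive on the $H^2$-orthogonal complement of $N$ in $U$. This is the step where the hypotheses on $\mathcal{C}$ enter: the affine constraint set $V:=\{v\in U:v(X_j)=Z_j\}$ and its tangent space $V_0:=\{v\in U:v(X_j)=0\}$ are closed in $H^2(\Gamma)$ by the embedding $H^2(\Gamma)\subset C(\Gamma)$, and a nonzero $v=c\cdot x|_\Gamma\in N$ lies in $V_0$ only if every $X_j$ lies on the plane through the origin normal to $c$; since $K\ge 4$ and the $X_j$ are not coplanar this forces $c=0$, so $V_0\cap N=\{0\}$. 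A short compactness argument ($N$ being finite-dimensional) then promotes this to coercivity of $a$ on $V_0$; minimising the coercive quadratic $w\mapsto\tfrac12 a(u_0+w,u_0+w)$ over the closed subspace $V_0$, where $u_0\in U$ is any fixed interpolant of the data, yields the unique solution $u=u_0+w^*$. This is essentially the content of \cite[Theorem 5.1]{EllHer20-A}.

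For the regularity I would introduce Lagrange multipliers $\lambda_0\in\R$ for $\int_\Gamma v=0$ and $\lambda_1,\dots,\lambda_K\in\R$ for the point constraints, so that $u$ satisfies $a(u,\phi)=\lambda_0\int_\Gamma\phi+\sum_{j=1}^K\lambda_j\phi(X_j)$ for all $\phi\in H^2(\Gamma)$; integrating by parts on the closed surface, $u$ solves
\[
	\kappa\,\Delta_\Gamma^2 u-\Big(\sigma-\tfrac{2\kappa}{R^2}\Big)\Delta_\Gamma u-\tfrac{2\sigma}{R^2}\,u=\lambda_0+\sum_{j=1}^K\lambda_j\,\delta_{X_j}
\]
in the sense of distributions on $\Gamma$. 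For $p\in(1,2)$ one has $p'\in(2,\infty)$ and the two-dimensional embedding $W^{1,p'}(\Gamma)\subset C(\Gamma)$, so each Dirac mass $\delta_{X_j}$, hence the whole right-hand side, belongs to $W^{-1,p}(\Gamma)=\big(W^{1,p'}(\Gamma)\big)^*$. The operator on the left is a fourth-order elliptic operator on the closed surface $\Gamma$ with principal part $\kappa\Delta_\Gamma^2$; applying the $L^p$ a priori elliptic estimate, which gains four derivatives in the form $\|u\|_{W^{3,p}}\le C(\|Lu\|_{W^{-1,p}}+\|u\|_{L^p})$ and holds notwithstanding the smooth finite-dimensional kernel $N$, to $u\in H^2(\Gamma)\subset L^p(\Gamma)$ gives $u\in W^{3,p}(\Gamma)$.

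The main obstacle is this last step. The right-hand side is a borderline distribution --- in two dimensions $\delta_x\notin H^{-1}(\Gamma)$ --- which is exactly why the conclusion cannot be improved to $H^3=W^{3,2}$ and forces the use of $L^p$ elliptic theory with $p<2$; some care is also needed to set up the fourth-order elliptic estimate in the presence of the nontrivial (but smooth) kernel of $L$, for instance by splitting off its $N$-component or by keeping an $L^p$ remainder term.
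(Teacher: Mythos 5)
Your proposal is correct, but its regularity half takes a genuinely different route from the paper. For existence and uniqueness the paper simply cites \cite[Theorem 5.1]{EllHer20-A}; your spherical-harmonics computation identifying the kernel of $a$ on $U$ with the span of the coordinate functions, the observation that non-coplanarity forces $V_0\cap N=\{0\}$, and the finite-dimensional compactness upgrade to coercivity on $V_0$ essentially reconstructs that cited result (and is consistent with the paper's remark on non-uniqueness for coplanar constraint points). For the $W^{3,p}$ statement, the paper (Appendix \ref{app:EllReg}) starts from the same Lagrange-multiplier identity but does not pass to fourth-order $L^p$ elliptic theory with $W^{-1,p}$ data: it introduces the second-order splitting $\eta:=-\Delta_\Gamma u-\tfrac{2}{R^2}u\in L^2(\Gamma)$ and shows $\eta\in W^{1,p}(\Gamma)$ by duality, testing against $v$ solving $-\kappa\Delta_\Gamma v+\sigma v=\phi$ and using the second-order bound $\|v\|_{0,\infty}\le C\|\phi\|_{-1,q}$ from \cite{Nec11}, so that $|\langle\phi,\eta\rangle|\le C\|\phi\|_{-1,q}$; it then concludes $u\in W^{3,p}(\Gamma)$ from the self-contained surface estimate of Proposition \ref{app:Prop:EllReg}, proved via the inf-sup condition of \cite{EllFriHob19} and commutator identities, which upgrades $\Delta_\Gamma u\in W^{1,p}$ to $u\in W^{3,p}$. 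Your route --- writing the Euler--Lagrange equation distributionally with Dirac masses in $W^{-1,p}(\Gamma)$ and invoking the a priori estimate $\|u\|_{3,p}\le C\left(\|Lu\|_{-1,p}+\|u\|_{0,p}\right)$ for the fourth-order elliptic operator on the closed surface --- is legitimate and shorter, and your handling of the smooth finite-dimensional kernel is fine, but it leans on parametrix/Calder\'on--Zygmund theory for negative-order data on a manifold, which is precisely the black box the paper's argument avoids: the paper only needs second-order $L^\infty$ estimates plus an inf-sup inequality, and its splitting mirrors the second-order splitting used in the numerical scheme of \cite{EllHer20-A}. Both arguments correctly locate the obstruction at $p=2$, since $\delta_{X_j}\notin H^{-1}(\Gamma)$ in two dimensions.
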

\begin{remark}\hspace{0.1cm}
\begin{itemize}
\item
The fact the solution of Problem \ref{prob:SingleParticleEnergyMinimisation} has three weak derivatives will be used to give a more convenient representation of the derivative we calculate.
\item
A related  problem has been considered in {\cite{EllFriHob17}},
where the authors consider the minimisation over a smaller space which enforces a fixed centre of mass for the membrane.
\item
The works {\cite{EllGraHob16,GraKie17,GraKie19}} consider a larger solution space whereby the particles may, in some sense, tilt.
The problem for this tilting on a sphere, or general domain, is of interest and may be studied in future work.
\item
An example of non-uniqueness for $K>4$ would be to consider $\mathcal{C}\subset \{ x \in \Gamma : x_1 = 0\}$.
Then for $u$ a solution of Problem \ref{prob:SingleParticleEnergyMinimisation}, we see that $u + \alpha \nu_1 \in U$ and $J(u + \alpha\nu_1) = J(u)$ for any $\alpha \in \R$.
\end{itemize}
\end{remark}

\subsection{A single particle model}
 We wish to model the attachment of proteins  to a biomembrane. A protein is considered to be a rigid discrete
 structure which is attached to the membrane at a finite number of fixed points. An example would be  a protein such as 
 FCHo2 F-BAR domains, where it is understood that a small number of atoms are more likely to attach to the 
 membrane \cite{HenBouMei10,HenKenFor07}. This is in contrast to the case mainly considered in \cite{GraKie19}, 
 where the protein is modelled as being embedded in the membrane and attached along a curved boundary. The protein biomembrane interaction is modelled by attachment at these points.
To begin, we restrict ourselves to a single protein in order to establish notation.
We describe the protein by a finite set of distinct points $\mathcal{G}:=\{\tilde X_i \in  \R^3,\, i=1,...,M\}$. 
The points of $\mathcal{G}$ correspond to charged ends of the protein which attach to the membrane.
The attachment constraint is the requirement that $\mathcal{G}$ is contained in the graph $\mathcal{M}(u)$ which we write as 
\begin{equation}\label{eq:GeometricInclusion}
	\mathcal{G}\subset \mathcal{M}(u).
\end{equation}
It follows that
any $\tilde{X} \in \mathcal{G}$ may be uniquely decomposed into
\[
	\tilde{X} = \pi(\tilde{X}) + d(\tilde{X}) \nu(\pi(\tilde{X})) = R \frac{\tilde{X}}{|\tilde{X}|} + \left(|\tilde{X}|-R\right)\frac{\tilde{X}}{|\tilde{X}|}
\]
and the condition \eqref{eq:GeometricInclusion} becomes
\begin{equation}\label{eq:geometricConstraint2}
	u(\pi(\tilde{X})) = d(\tilde{X}) ~~ \forall \tilde{X} \in \mathcal{G}.
\end{equation}
For ease of notation, we write $X:= \pi(\tilde{X})$, $z:=d(\tilde{X})$ and index the points of $\mathcal{G}$ so that $\{\tilde{X}_i\}_{i=1}^M = \mathcal{G}$,
hence we may write \eqref{eq:geometricConstraint2} as
\begin{equation}\label{eq:geometricConstraint3}
	u(X_i) = z_i~~\forall i=1,...,M.
\end{equation}
\begin{definition}\label{def:SitesOfAttach}
We write $\cee:= \{ \pi(\tilde{X}):\tilde{X} \in \mathcal{G}\}= \{X_i\}_{i=1}^M $ to be the sites of attachment.
Furthermore, we write
\[
	u|_{\cee} = Z
\]
to be shorthand for \eqref{eq:geometricConstraint3}.
\end{definition}


\subsection{Parametrisation of a single particle}\label{subsec:ParaSingleParticle}
We now parameterise the movement of a single particle.
We attempt to keep our notation as similar as possible to that of \cite{GraKie17} which deals with the movement of curves in a flat domain, in contrast to our points which move on a sphere.

The assumption  that the protein is rigid  is meant in the sense that any 
movement of $\mathcal{G}$ should preserve the orientation and the distance between points.
There are 6 degrees of freedom by which $\mathcal{G}$ can be moved, this is translation and rotation.
We further restrict to lateral (i.e. tangential) movement of $\mathcal{G}$ over the membrane.
This means that the height of attachment above $\Gamma$, the values $Z$, will be independent of any movement.
In the flat setting these lateral movements correspond to rotation perpendicular to the plane and 
translation within the plane.
Although this is a strong restriction to make to the full model, it is important in this setting to avoid the particle moving out of the graph-like description.

The configuration of a single   particle $\mathcal G$ is defined by a rigid transformation from a fixed position.  
We associate one point  $X_\mathcal G\in \Gamma$ with $\mathcal G$. We call $X_\mathcal G$   the {\it centre} of $\mathcal G$. 
The configuration of the particle is defined by a rotation about the axis defined by $\nu(X_\mathcal{G})$ 
together with a \emph{tangential translation} of $X_\mathcal{G}$ along the surface of $\Gamma$.
A rotation around $\nu(X_\mathcal G)$ is characterised by an angle, $\alpha\in \mathbb R$.
A tangential translation is characterised by a tangent vector $\tau \in T_{X_\mathcal{G}} \Gamma\cong \R^2$.
For this tangent vector, the idea is to consider the transport of $X_\mathcal{G}$ along the geodesic defined by $\tau$ and that the other points should follow with a rigid transformation.
In the setting of a sphere, this corresponds to rotating the points by angle $|\tau|$ in the axis perpendicular to both $\nu(X_\mathcal{G})$ and $\tau$.
Thus for a particle with 
centre $X_\mathcal G$ we write $\mathcal G(p)$, $p=(\alpha,\tau)$ to be as described above, leading to the following  definition of particle configuration.

\begin{definition}\label{def:MoveSingleParticle}
Given particle $\mathcal{G}\subset \R^3$ with centre $X_\mathcal G$ and $p = (\alpha,\tau) \in \R \times T_{X_\mathcal{G}} \Gamma$, we write
\[
	\mathcal{G}(p) := \{\phi(p,\tilde{X}):\tilde{X} \in \mathcal{G} \},
\]
with
\begin{equation}\label{eq:RigidBodyMotion}
	\phi(p,x):=R_T(\tau) R_n(\alpha)x~~ \forall x \in \R^3,
\end{equation}
where $R_n(\alpha)$ is given by
\[
	R_n(\alpha)x := (\nu(X_\mathcal{G}) \otimes \nu(X_\mathcal{G})) x 
	+ \cos(\alpha) (\nu(X_\mathcal{G}) \times x) \times \nu(X_\mathcal{G}) 
	+ \sin(\alpha) (\nu(X_\mathcal{G}) \times x),
\]
and for $\tau \neq 0$, define $\tilde{\tau}:= \nu(X_\mathcal{G})\times\frac{\tau}{|\tau|}$, $R_T(\tau)$ is given by
\[
	R_T(\tau) x := (\tilde{\tau}\otimes \tilde{\tau})x
	+
	\cos(|{\tau}|) (\tilde{\tau}\times x)\times \tilde{\tau}
	+ \sin(|{\tau}|)(\tilde{\tau}\times x),
\]
and $R_T(0) x = x$.
A diagram showing the transformations $R_n$ and $R_T$ may be found in Figure \ref{fig:DiagramRotations}.
%
%
Furthermore, write
\[
	\cee(p):= \{\phi(p,X): X \in \cee\}, 
\]
this coincides with the projection of $\mathcal{G}(p)$ onto $\Gamma$.
\end{definition}
\begin{figure}\begin{center}
\subfigure[Diagram for $R_n$.]{\includegraphics[scale=0.07125]{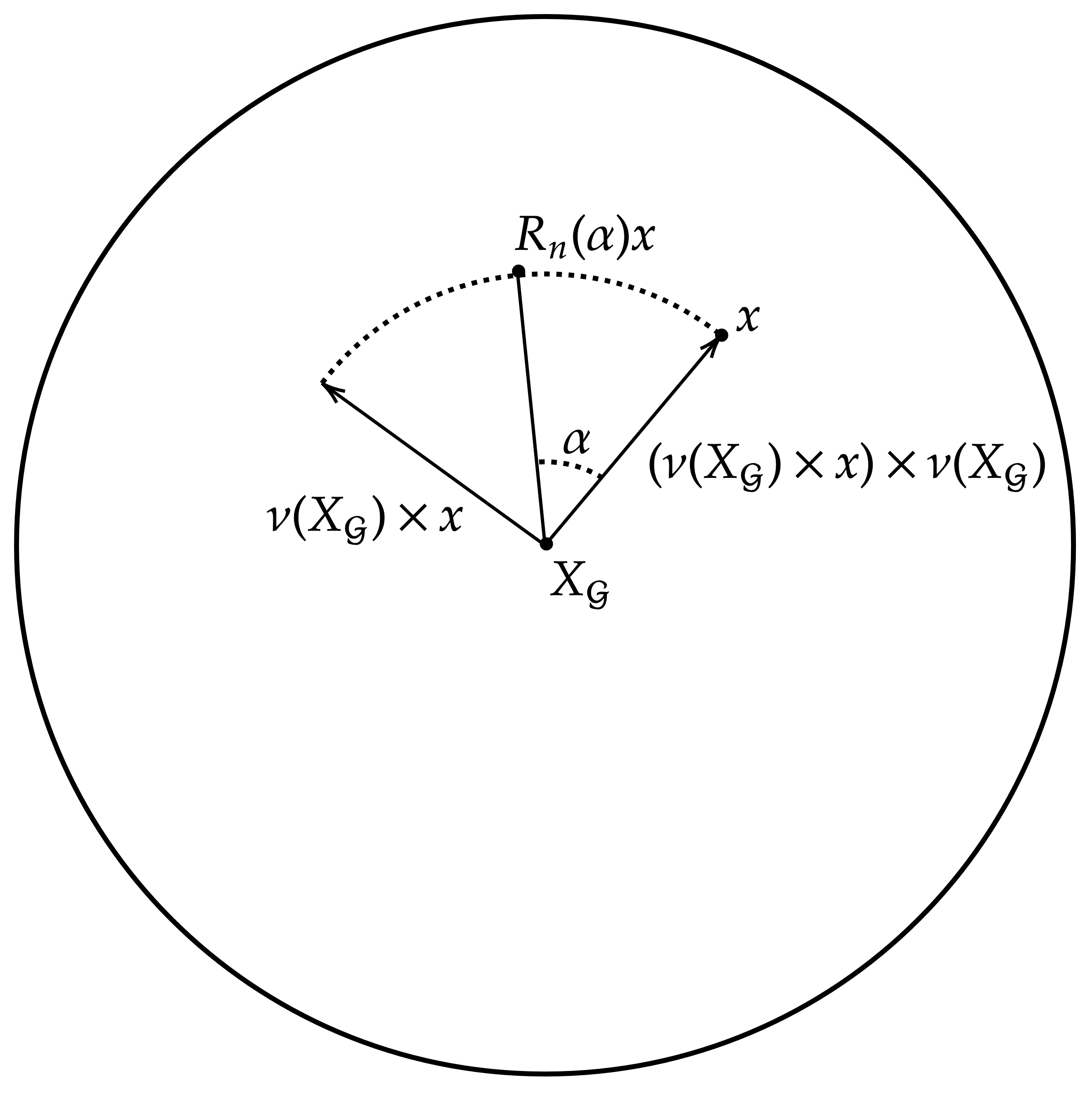}}
\subfigure[Diagram for $R_T$.
]{\includegraphics[scale=0.07125]{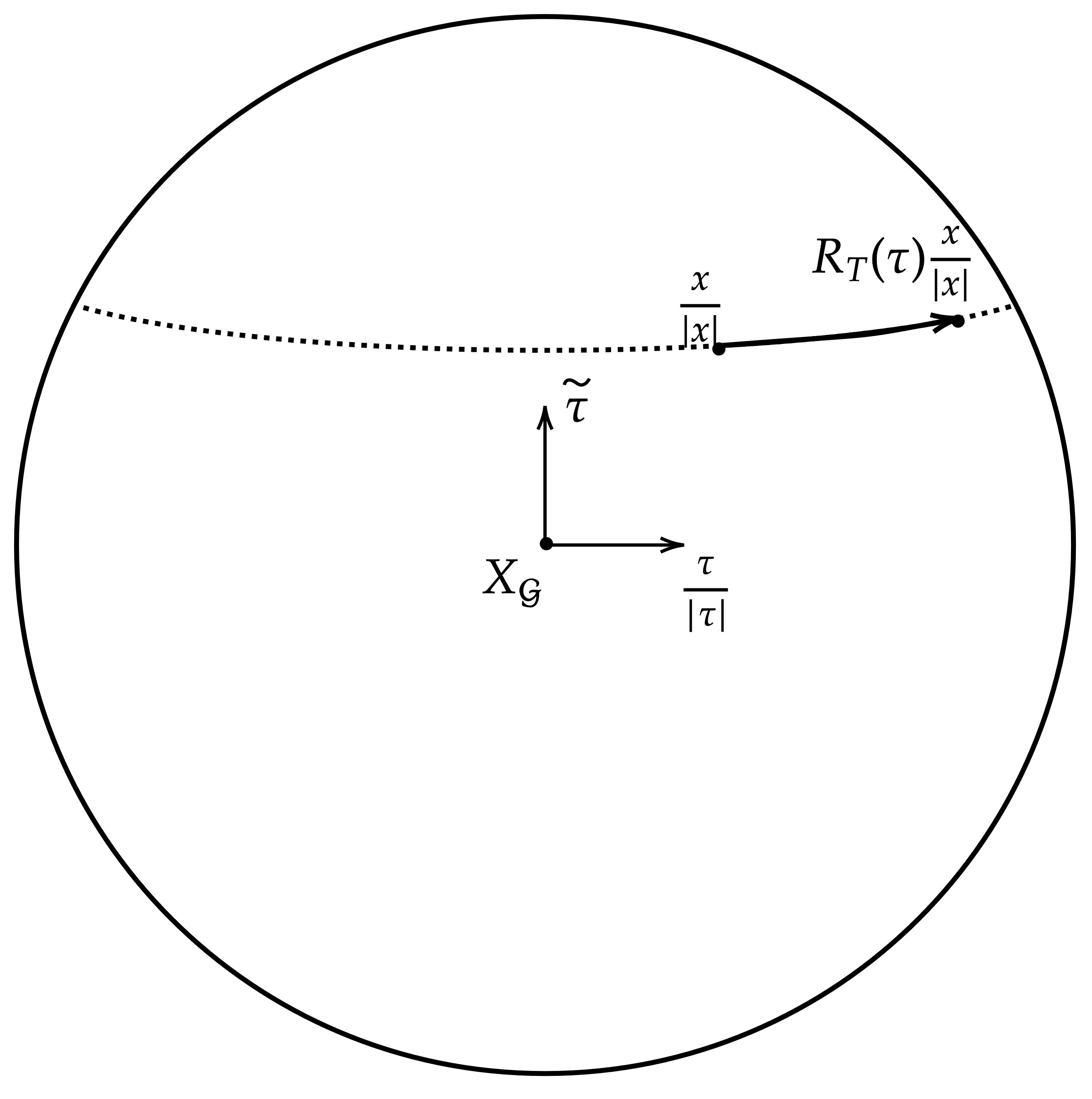}}
\caption{Diagrams demonstrating the transformations $R_n$ and $R_T$, both with $\nu(X_\mathcal{G})$ coming out of the page.}
\label{fig:DiagramRotations}
\end{center}
\end{figure}
\begin{remark}
	The choice that $\phi(p,x):= R_T(\tau)R_n(\alpha)x$ rather than $R_n(\alpha)R_T(\tau)x$ is arbitrary.
	It is clear that they will both generate the same family of configurations.
\end{remark}
We notice $\mathcal G=\mathcal{G}(0) $ and similarly $\cee= \cee(0)$.
We further note that $p$ is periodic in the following sense.
For $p = (\alpha,\tau)$, $\bar{p} = (\alpha+ 2\pi,\tau)$ and $\tilde{p}= (\alpha,\tau + 2\pi \frac{\tau}{|\tau|})$ it holds,
\[
	\phi(p,\cdot) \equiv \phi(\bar{p},\cdot ) \equiv \phi(\tilde{p},\cdot).
\]
Further note that if $\mathcal{G}$ contains only one point, $\tilde{X}_1$, and one sets $X_\mathcal{G}= X_1$, it is seen that $\alpha$ becomes a redundant parameter.

\subsection{Configuration of particles}
We now make the extension to multiple groups of particles.

\begin{definition}\label{def:MultiplePointsMove}
Given discrete sets with finite number of points,
\[
	\mathcal{G}_1,...,\mathcal{G}_N \subset \mathcal{N}_\delta \subset \R^3,
\]
we write
\[
	\cee_i := \{\pi(\tilde{X}):\tilde{X}\in \mathcal{G}_i \}  ~\mbox{for}~ i=1,...,N,
\]
the projection of $\mathcal{G}_i$ onto $\Gamma$.
Let the $\mathcal{G}_{1},...,\mathcal{G}_N$ have centres $X_{\mathcal{G}_1},...,X_{\mathcal{G}_N}$  and let $p= (p_1,...,p_N) \in \prod_{i=1}^N (\R \times T_{X_{\mathcal{G}_i}} \Gamma)$, 
 where  $p_i=(\alpha_i,\tau_i) \in \R \times T_{X_{\mathcal{G}_i}} \Gamma$ we define
\[
	\phi_i(p,x) := R_{T_i}(\tau_i)R_{n_i}(\alpha_i)x~~ \forall x \in \R^3,
\]
where the operators $R_{T_i}(\tau_i),\, R_{n_i}(\alpha_i)$ are defined relative to the centres $X_{\mathcal{G}_i}$, as in Definition \ref{def:MoveSingleParticle}.

Further define
\[
	\mathcal{G}_i(p) := \{\phi_i(p,\tilde{X}): \tilde{X}\in \mathcal{G}_i \}~\mbox{for}~ i=1,...,N,
\]
and
\[
	\cee_i(p):= \left\{\phi_i(p,X) : X\in \cee_i \right\}  ~\mbox{for}~ i=1,...,N,
\]
the projection of $\mathcal{G}_i(p)$ onto $\Gamma$. Observe that
\[ \mathcal G_i(0)=\mathcal G_i~~\mbox{and}~~~\mathcal C_i(0)=\mathcal C_i,~~~i=1,...,N.
\]

\end{definition}

\begin{definition}
	We define the set of feasible particle configurations to be
	\[
		\Lambda^{\circ} := \left\{p \in \prod_{i=1}^N (\R \times T_{X_{\mathcal{G}_i}} \Gamma ): \forall i,\,j=1,...,N, ~i\neq j,~ \cee_i(p) \cap \cee_j(p) = \emptyset \right\}.
	\]
	We define the closure of the set of feasible particle configuration by $\Lambda := \overline{\Lambda^\circ}$.
	Furthermore, for $p \in \Lambda$ we define
	\[
		\Gamma(p):= \Gamma \setminus \bigcup_{i=1}^N \cee_i(p).
	\]
\end{definition}
We first note that $0\in \prod_{i=1}^N(\R\times T_{X_{\mathcal{G}_i}})$ is not a distinguished configuration. 
Given any  non-overlapping initial configuration of particles $\{\mathcal C_i\}_{i=1}^N$, it is clear that $\Lambda^\circ$ is the set of all possible configurations of 
particles which have been moved by the rigid motions parametrised by $p$ described at the start of Section \ref{subsec:ParaSingleParticle}.
\begin{figure}\begin{center}
\includegraphics[width=0.9\linewidth]{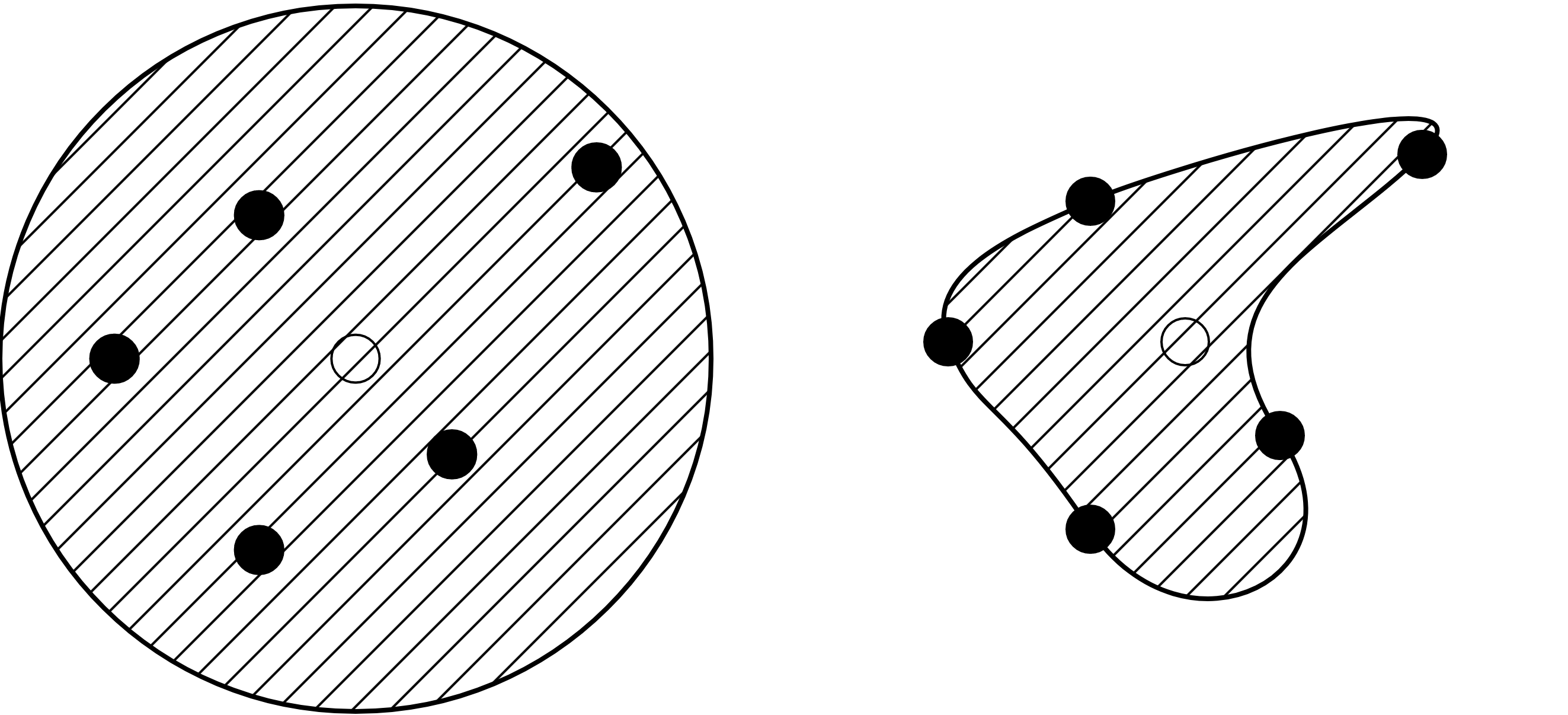}
\caption{Diagram indicating the different areas which might be excluded from having part of another particle in for two idential particles.
The left is the radius approach, on the right the area is given by the interior of a curve passing through all the points.}\label{fig:ExclusionAreas}
\end{center}
\end{figure} 

\begin{remark}
Notice that for $p \in \Lambda^\circ$, it may hold that the 'interiors' of particles overlap.
As such one might want to consider a subset of $\Lambda^\circ$ whereby one defines an appropriate interior of particles and assumes that the intersection of these is empty, or perhaps one may also assign a 'radius' to each particle and consider the set where there are no points from another particle which lie inside this radius.
Two ideas of these exclusion areas are shown in Figure \ref{fig:ExclusionAreas}.
In this diagram, the clear dot is the centre of a particle and the black dots are the points of the particle and the exclusion area is signified by the hatched lines.
The choice of this subset is not of importance when constructing the derivative, but is important when considering which particle configurations are admissible.
Requiring that the particles do not overlap could be included as part of a Lennard-Jones potential, see (4.18) in \cite{EllGraHob16}, where it could be seen that this discussion pertains to a choice of the distance function in their formula.
\end{remark}


For each $p\in \Lambda^\circ$ we have a set of point constraints on elements of $H^2(\Gamma)$. This motivates the following parameterised trace operators.
\begin{definition}\label{def:DefineT} Given $p \in \Lambda^\circ$:-
\begin{itemize}
\item
	 For $i=1,...,N$, define the maps $T_i(p)\colon H^2(\Gamma) \to \R^{|\cee_i|}$ by
	\[
		T_i(p) \colon v \mapsto \left( v\circ \phi_i(p,\cdot)\right)|_{\cee_i},
	\]
	where $\phi_i(p,\cdot)|_{\cee_i}$ is meant as in Definition \ref{def:SitesOfAttach}.
	\item
	For $v \in H^2(\Gamma),$ $Z\in \prod_{i=1}^N \R^{|\cee_i|},$ we say $T(p)v = Z$ when
	\[
		T_i(p) v = Z_i\in \R^{|\cee_i|} ~\mbox{for} ~ i=1,...,N
	\]
	where $Z$ is given by the particles $\mathcal{G}_1,...,\mathcal{G}_N$.
	\item Define the following  subsets of $H^2(\Gamma)$
	\[
		U(p):= \left\{ v \in U :  T(p) v =Z\right\},
	\]
	\[
		U_0(p):= \left\{ v \in U : T(p) v = 0\right\}.
	\]
\end{itemize}
\end{definition}

\begin{assumption}Henceforth, we assume that there is $l$, $1\leq l \leq N$, such that $\cee_l$ is not coplanar.
\end{assumption}
\begin{definition}[Membrane configurational energy]\label{def:solution}
	Given $p\in \Lambda^\circ$, we define $u(p) \in U(p)$ by
	\[
		u(p) := \argmin_{v \in U(p)} J(v)
	\]
	and we define the membrane configurational energy $\mathcal{E}\colon \Lambda^\circ \to \R$ by
	\[
		\mathcal{E}(p) := J(u(p)).
	\]
\end{definition}

It is clear that, by a trivial extension to Theorem \ref{thm:ExistenceRegularity}, $u(p)$ exists, is unique and satisfies $u(p) \in W^{3,2-\delta}(\Gamma)$ for any $\delta \in (0,1)$.
For $p \in \partial\Lambda^\circ$ we do not necessarily have that a $u(p)$ exists, this is due to $U(p)$ possibly being empty.

\begin{remark}
	Notice that $\mathcal{E}$ may not be the total energy associated to the particle-membrane configuration. For example,  $\mathcal{E}$ may be augmented with a pairwise interaction 
between particles modeling  forces between different particles.
\end{remark}

\section{Gradient of the energy  with respect to configuration changes}\label{sec:CalculateDerivative}
In this section  we find a formula for  the derivative of $\mathcal{E}(p)$ with with respect to 
changes in the configuration $p$.
\begin{definition}[Derivative of the configurational energy]
	The configurational energy is differentiable at $p\in \Lambda^\circ$ in the direction   
	$e\in\prod_{i=1}^N (\R \times T_{X_{\mathcal{G}_i}}\Gamma)$ if the derivative 
	\[
		\frac{\dee}{\dee t}\mathcal{E}(p+te)|_{t=0},
	\]
	 exists and we denote this by
	 $\partial_e \mathcal{E}(p)$.
\end{definition}

The difficulty lies in the implicit definition of the energy 
$\mathcal{E}(p)$ in terms of the minimisation of the  quadratic energy $J(v)$ over the configurational space $U(p)$ requiring the evaluation of
$$
\frac{\dee}{\dee t}J(u(p+te))|_{t=0}$$
which involves the minimisation of $J(\cdot)$ over $U(p+te)$.  In order to achieve this we fix $p$ and  employ suitable 
 local isomorphisms on the vector spaces $U(p)$ via appropriate diffeomorphisms of the domain $\Gamma(p)$.
This is applied locally to transform the 
energy \eqref{eq:QuadraticSurfaceEnergy} and the related minimisation problems over a reference function space. 

We make the following assumption:
\begin{assumption}\label{ass:ThereExistsDiffeo}
	Let $k\geq 3$.  For each $p \in \Lambda^\circ$ there exists an open ball $\mathcal{B}\subset \prod_{i=1}^N \left( \R \times T_{X_{\mathcal{G}_i}} \Gamma\right)$ containing $0$  and a family of $C^k$-diffeomorphisms $\chi\colon \mathcal{B}\times\Gamma\to\Gamma$ such that
	$$\chi(0,\cdot) \mbox{ is the identity on}~ \Gamma$$
and	for all $q \in \mathcal{B}$, $p+q \in \Lambda^\circ$ and   
	 
	\begin{equation}\label{eq:ConditionsForDiffeo}
		 v\circ\chi(q,\cdot)^{-1} \in U(p+q) \iff v \in {U}(p).
	\end{equation}

\end{assumption}
	
We now define what we mean by the derivative of $\chi$ with respect to $e$.
	\begin{definition}
		Given $q \in \mathcal{B}$ and $e \in \prod_{i=1}^N \left( \R \times T_{X_{\mathcal{G}_i}} \Gamma\right)$, for each $x \in \Gamma$, the derivative of $\chi(\cdot,x)$ at $q$ in direction $e$ is defined to be
		\[
			\partial_e \chi(q,x):= \frac{\dee}{\dee t} \chi(q+t e, x) |_{t=0}.
		\]
\end{definition}

	\begin{remark}
	
 Notice that:
 \begin{itemize}
 \item
 The dependence on $p$ of  $\mathcal B$ and $\chi$ has been suppressed.
 \item
 For our purposes we will not require full knowledge of the diffeomorphism $\chi$, only  the derivative $\partial_e \chi(0,\cdot)$.
\item
The fact that $\Lambda$ may be identified as a subset of the finite dimensional space $\R^{3\times N}$ will be  exploited to reduce the 
problem of differentiability of $\mathcal{E}$ to be an application of the 
Implicit Function Theorem applied to a reformulated interaction energy.
\item
The condition \eqref{eq:ConditionsForDiffeo} may be decomposed into three parts: $T(p+q)\left( v \circ \chi^{-1}\right) = T(p)v$ for all $v \in H^2(\Gamma)$, $\int_\Gamma v \circ \chi^{-1} = \int_\Gamma v$ for all $v \in H^2(\Gamma)$ and $v \in H^2(\Gamma) \iff v \circ \chi^{-1} \in H^2(\Gamma)$.
\item 
The condition on $\chi$ that $\int_\Gamma v\circ \chi(q,\cdot)^{-1} = \int_\Gamma v$ for all $v \in H^2(\Gamma)$ is equivalent to requiring that $\det\left( \nabla_\Gamma \chi(q,\cdot) + \nu(\cdot) \circ \chi(q,\cdot) \otimes \nu(\cdot) \right)=1$ on $\Gamma$.
As such, it is sufficient to have $\partial_e \det\left( \nabla_\Gamma \chi(q,\cdot) + \nu(\cdot) \circ \chi(q,\cdot) \otimes \nu(\cdot) \right)=0$ for any $e \in \prod_{i=1}^N \left( \R \times T_{X_{\mathcal{G}_i}} \Gamma\right)$ .
We will later see that, for $q = 0$, this is the same as requiring $\divg \partial_e \chi(0,\cdot)$ vanishes.
\end{itemize}
	\end{remark}



\subsection{The transformed functional and its derivative}


Using the  $\chi$ satisfying  Assumption \ref{ass:ThereExistsDiffeo}, we have the following functional.
\begin{definition}
Let $\Jstar \colon \mathcal{B}\times U(p) \to \R$ be given by
\[
	\Jstar \colon (q,v) \mapsto J (v\circ \chi^{-1}(q,\cdot)),~J^*(0,v)=J(v)~\forall v\in U(p).
\]
We call  $J^*(\cdot,v)$ the transformed membrane energy.
Given $e \in \prod_{i=1}^N \left( \R \times T_{X_{\mathcal{G}_i}}\right)$, if, for any $v \in U(p)$, the derivative
\[
	\frac{\dee}{\dee t}\Jstar(te,v)|_{t=0}
\]
exists, we denote it $\partial_e \Jstar(0,v)$.
\end{definition}

We now define some terms which appear in \cite{ChuDjuEll19} which are useful to give an explicit representation of $\Jstar$.
\begin{definition}
	Given $q \in \mathcal{B}$, we define on $\Gamma$  the matrices and determinant
	\begin{align*}
		B=&B(q,\cdot) := \nabla_\Gamma \chi(q,\cdot) + \nu(\cdot) \circ \chi(q,\cdot) \otimes \nu(\cdot),
		\\
		G=&G(q,\cdot):=B(q,\cdot)^TB(q,\cdot),
		\\
		b=&b(q,\cdot):=\det(B(q,\cdot)).
	\end{align*}
\end{definition}

The following, convenient representation of $\Jstar$ is immediate from the results Lemmas \ref{lem:FirstOrderTrans} and \ref{lem:SecondOrderTrans} in the appendix.
\begin{lemma}
Given $v \in U(p)$, $q \in \mathcal{B}$,
it holds that
\begin{equation}\begin{split}
\Jstar(q,v)=&\frac{\kappa}{2}\int_{\Gamma} \frac{1}{b}{\left(\divg (b G^{-1} \nabla_\Gamma v)\right)^2}
		+\left(\frac{\sigma}{2} - \frac{\kappa}{R^2}\right) \int_\Gamma b\nabla_\Gamma v \cdot G^{-1} \nabla_\Gamma v
		-\frac{\sigma}{R^2}\int_\Gamma b v^2.\end{split}
\end{equation}
\end{lemma}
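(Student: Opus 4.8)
The statement is that the transformed energy $J^*(q,v) = J(v\circ\chi^{-1}(q,\cdot))$ admits the explicit representation in terms of $b$, $G^{-1}$, and the surface gradient and divergence on $\Gamma$. The plan is to compute each of the three terms of $J(w)$ with $w = v\circ\chi^{-1}(q,\cdot)$ by a change of variables $y = \chi(q,x)$ (equivalently $x = \chi^{-1}(q,y)$), converting integrals over $\Gamma$ (in the $y$ variable) into integrals over $\Gamma$ (in the $x$ variable). Since the excerpt tells us this is ``immediate from Lemmas \ref{lem:FirstOrderTrans} and \ref{lem:SecondOrderTrans} in the appendix,'' the real content is assembling those two lemmas; I would structure the proof as (i) recalling what those lemmas give for first-order (gradient) and second-order (Laplace-Beltrami) terms under the pullback, then (ii) substituting and collecting.

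\textbf{Key steps.} First I would recall the geometric setup: $\chi(q,\cdot):\Gamma\to\Gamma$ is a $C^k$-diffeomorphism, and its intrinsic derivative is encoded by $B = \nabla_\Gamma\chi + (\nu\circ\chi)\otimes\nu$, which is the natural extension of $\nabla_\Gamma\chi$ to an invertible $3\times 3$ matrix acting correctly in the normal direction (mapping $\nu(x)$ to $\nu(\chi(x))$), with $G = B^TB$ the associated first fundamental form and $b = \det B$ the area element, so that $\int_\Gamma f(\chi(x))\, b(x)\,\dee x = \int_\Gamma f(y)\,\dee y$. Second, for the zeroth-order term $\int_\Gamma w^2$: writing $w(\chi(x)) = v(x)$, the change of variables gives $\int_\Gamma w^2 = \int_\Gamma b\, v^2$, yielding the $-\frac{\sigma}{R^2}\int_\Gamma b v^2$ term. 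Third, for the Dirichlet term $\int_\Gamma |\nabla_\Gamma w|^2$: by the chain rule on surfaces (Lemma \ref{lem:FirstOrderTrans}), $\nabla_\Gamma(w\circ\chi)(x)$ relates to $(\nabla_\Gamma w)(\chi(x))$ through $B$, so that after the change of variables $\int_\Gamma |\nabla_\Gamma w|^2 = \int_\Gamma b\, \nabla_\Gamma v\cdot G^{-1}\nabla_\Gamma v$; combined with the coefficient $\frac12(\sigma - \frac{2\kappa}{R^2})$ from \eqref{eq:QuadraticSurfaceEnergy} this gives the middle term $(\frac{\sigma}{2}-\frac{\kappa}{R^2})\int_\Gamma b\,\nabla_\Gamma v\cdot G^{-1}\nabla_\Gamma v$. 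Fourth, the main term $\int_\Gamma (\Delta_\Gamma w)^2$: here one uses the identity (Lemma \ref{lem:SecondOrderTrans}) expressing $(\Delta_\Gamma w)(\chi(x))$ in terms of $v$; the cleanest route is the weak/divergence-form identity $\int_\Gamma \phi\,\Delta_\Gamma w = -\int_\Gamma \nabla_\Gamma\phi\cdot\nabla_\Gamma w$, transformed so that $(\Delta_\Gamma w)\circ\chi = \frac1b\divg(b\,G^{-1}\nabla_\Gamma v)$ in the appropriate (distributional/$L^2$ after the $W^{3,p}$ regularity) sense, whence $\int_\Gamma(\Delta_\Gamma w)^2 = \int_\Gamma \frac1b \big(\divg(b\,G^{-1}\nabla_\Gamma v)\big)^2$. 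Multiplying by $\frac{\kappa}{2}$ gives the first term. Finally, I would sum the three contributions to obtain the claimed formula, noting that $v\in U(p)$ ensures $w = v\circ\chi^{-1}\in U(p+q)$ by Assumption \ref{ass:ThereExistsDiffeo}, so the expression is well-defined, and that the regularity $v\in W^{3,p}(\Gamma)$ from Theorem \ref{thm:ExistenceRegularity} (trivially extended) justifies all the integrations by parts.

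\textbf{Main obstacle.} The delicate point is the second-order term: one must be careful that $\Delta_\Gamma$ does not commute with the pullback in the naive way, and the correct transformation rule for the Laplace-Beltrami operator under a non-isometric diffeomorphism produces precisely the conformal-type factor $\frac1b\divg(b\,G^{-1}\nabla_\Gamma\cdot)$ — this is the surface analogue of the well-known flat-domain formula and is exactly the content of Lemma \ref{lem:SecondOrderTrans}. Verifying this identity requires handling the tangential Hessian, which by \eqref{eq:SurfaceCommutate} is not symmetric, and tracking how the normal-direction components of $B$ enter; the extended matrix $B$ (rather than just $\nabla_\Gamma\chi$) is designed precisely so these bookkeeping terms close up correctly. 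Since the excerpt defers this to the appendix, in the body I would simply invoke Lemmas \ref{lem:FirstOrderTrans} and \ref{lem:SecondOrderTrans} and perform the substitution, so the proof of this lemma itself is short.
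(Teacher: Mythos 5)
Your proposal is correct and is essentially the paper's own argument: a change of variables with area element $b$ combined with Lemma \ref{lem:FirstOrderTrans} (giving $\nabla_\Gamma v\cdot G^{-1}\nabla_\Gamma v$ for the Dirichlet term) and the trace of Lemma \ref{lem:SecondOrderTrans} (giving $(\Delta_\Gamma w)\circ\chi=\frac1b\divg(b\,G^{-1}\nabla_\Gamma v)$), which is exactly what the paper means by ``immediate from'' those lemmas. The only small caveat is that your appeal to $W^{3,p}$ regularity and integration by parts is unnecessary: the appendix identities hold already for $H^2$ functions, so the formula is valid for every $v\in U(p)$ without extra smoothness.
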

Note that we wish to differentiate $\Jstar$ with respect to $q$ and that the $q$ dependence is located in the coefficients $B(q)$. 
\begin{lemma}
	Suppose $\mathcal{B}\subset \prod_{i=1}^N(\R\times T_{X_{\mathcal{G}_i}}\Gamma)$ is sufficiently 
	small with $0 \in \mathcal{B}$ and $\chi\in C^k(\mathcal{B}\times\Gamma;\Gamma)$, then $\Jstar \in C^{k-2} (\mathcal{B}\times U(p);\R)$.
\end{lemma}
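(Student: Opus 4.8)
The plan is to establish the regularity $\Jstar \in C^{k-2}(\mathcal{B}\times U(p);\R)$ by inspecting the explicit representation
\[
\Jstar(q,v)=\frac{\kappa}{2}\int_{\Gamma} \frac{1}{b}\left(\divg (b G^{-1} \nabla_\Gamma v)\right)^2
+\left(\frac{\sigma}{2} - \frac{\kappa}{R^2}\right) \int_\Gamma b\nabla_\Gamma v \cdot G^{-1} \nabla_\Gamma v
-\frac{\sigma}{R^2}\int_\Gamma b v^2,
\]
and tracking how many derivatives in $q$ survive. The key observation is that $v$ enters linearly in each factor (quadratically in each integrand), so the dependence on $v$ is smooth and the only delicate point is the $q$-dependence through the coefficients $B(q,\cdot)=\nabla_\Gamma\chi(q,\cdot)+\nu\circ\chi(q,\cdot)\otimes\nu$. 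Since $\chi\in C^k(\mathcal{B}\times\Gamma;\Gamma)$, the map $q\mapsto \nabla_\Gamma\chi(q,\cdot)$ loses one derivative and lies in $C^{k-1}(\mathcal{B};C^0(\Gamma))$ (more precisely, in $C^{k-1}$ into a space of continuous matrix fields), and $q\mapsto \nu\circ\chi(q,\cdot)$ is $C^k$ since $\nu$ is smooth on $\Gamma$; hence $q\mapsto B(q,\cdot)$ is $C^{k-1}$.

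First I would note that, after possibly shrinking $\mathcal{B}$, the diffeomorphism property guarantees $b(q,\cdot)=\det B(q,\cdot)$ is bounded away from zero uniformly on $\mathcal{B}\times\Gamma$ and $G(q,\cdot)=B^TB$ is uniformly positive definite; therefore $q\mapsto b(q,\cdot)$, $q\mapsto 1/b(q,\cdot)$, and $q\mapsto G^{-1}(q,\cdot)$ are all $C^{k-1}$ maps into $C^0(\Gamma)$ (matrix inversion and reciprocal are smooth on the relevant open sets, so composition preserves the $C^{k-1}$ regularity). Next I would examine the second and third terms: $q\mapsto \int_\Gamma b\,\nabla_\Gamma v\cdot G^{-1}\nabla_\Gamma v$ and $q\mapsto \int_\Gamma b v^2$ are, for fixed $v\in U(p)\subset H^2(\Gamma)$, compositions of the $C^{k-1}$ coefficient maps with bounded bilinear pairings against $\nabla_\Gamma v\in H^1(\Gamma)^3$ and $v\in H^2(\Gamma)$ respectively, hence jointly $C^{k-1}$ in $(q,v)$ — these terms are in fact one degree better than claimed and cause no trouble.

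The main obstacle is the first (bending) term, because the integrand contains $\divg(bG^{-1}\nabla_\Gamma v)$, i.e. one tangential divergence of the product of a $q$-dependent coefficient $bG^{-1}$ with $\nabla_\Gamma v$. Expanding by the product rule, $\divg(bG^{-1}\nabla_\Gamma v)=\nabla_\Gamma\cdot(bG^{-1})\cdot\nabla_\Gamma v + bG^{-1}:D_\Gamma^2 v$ (schematically), so the factor $\nabla_\Gamma(bG^{-1})$ appears, costing one more $q$-derivative: $q\mapsto \nabla_\Gamma(b(q,\cdot)G^{-1}(q,\cdot))$ is only $C^{k-2}$ into $C^0(\Gamma)$, since differentiating $B$ once spatially already consumed one of the $k$ derivatives of $\chi$ and we now need a second spatial derivative for the $\divg$. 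Meanwhile $v\in W^{3,p}$ is not available for general $v\in U(p)$ — only $H^2$ — so one must be careful to pair $D_\Gamma^2 v\in L^2$ against $bG^{-1}$ (no spatial derivative needed there, $C^{k-1}$) and pair $\nabla_\Gamma v\in L^2$ against $\nabla_\Gamma(bG^{-1})\in C^0$ (the $C^{k-2}$ term). Squaring and integrating against $1/b$ (which is $C^{k-1}$, hence at least $C^{k-2}$), the worst term is thus $C^{k-2}$ jointly in $(q,v)$, with $v$-dependence being a continuous quadratic form whose coefficients depend $C^{k-2}$-smoothly on $q$. I would conclude by assembling these three estimates: the sum is $C^{k-2}(\mathcal{B}\times U(p);\R)$, the bending term being the binding constraint. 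A clean way to package the differentiation is to observe that each term is of the form $q\mapsto Q(q)[v,v]$ where $Q(q)$ is a bounded symmetric bilinear form on $H^2(\Gamma)$ (or on $U(p)$) depending on $q$ through finitely many coefficient fields, apply the chain rule for compositions of $C^{k-1}$ (resp.\ $C^{k-2}$) maps with the smooth algebraic operations (product, determinant, matrix inverse, reciprocal) and with the bounded multilinear evaluation map $(A,v)\mapsto \int_\Gamma A(\nabla_\Gamma v,\nabla_\Gamma v)$, and read off the loss of exactly two derivatives from the single instance of $\divg$ applied to a coefficient times $\nabla_\Gamma v$ in the bending term.
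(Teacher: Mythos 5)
Your proposal is correct and takes essentially the same route as the paper: both work from the explicit representation of $\Jstar$, observe that the integrand depends on $B$ and its first tangential derivative (hence the loss of two derivatives relative to $\chi$) while depending only quadratically, hence smoothly, on $v \in H^2(\Gamma)$, and use $B(0)=\I$ plus continuity to shrink $\mathcal{B}$ so that $\det B$ stays bounded away from zero, making $1/b$ and $G^{-1}$ well defined and regular in $q$. The only difference is presentational: the paper concludes by differentiating under the integral via dominated convergence, whereas you package the same step as a composition of $C^{k-2}$ coefficient maps with bounded bilinear evaluation on $H^2(\Gamma)$.
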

\begin{proof}
	It is clear from the expression for $\Jstar$ that it depends on $B$, the derivative of $B$ 
	and smoothly (in $H^2(\Gamma)$) on $v$.
	Since $B(0)= \I$, the identity matrix,  $B$ depends continuously on $q$ and $\det$ is a continuous map, 
	thus for a sufficiently small neighbourhood $\mathcal{B}\ni 0$, $\det(B(q)) > c >0$ it hold that $B$ is non-singular.
	Thus by smoothness of the integrand, we may apply the dominated convergence 
	theorem to obtain $\Jstar \in C^{k-2}(\mathcal{B}\times U(p);\R)$.
\end{proof}

\begin{theorem}\label{thm:applicationOfIFT}
	There exists  an open neighbourhood $\hat{\mathcal{B}}$ of $0$ in $\prod_{i=1}^N(\R\times T_{X_{\mathcal{G}_i}}\Gamma)$ such that $\mathcal{E}(p + \cdot) \in C^{k-2}(\hat{\mathcal{B}};\R)$.
	In particular, for $k\geq 3$ and $u = \argmin_{v\in U(p)} J(v)$,
	\[
		\partial_e \mathcal{E}(p) = \partial_e \Jstar(0,u).
	\]
\end{theorem}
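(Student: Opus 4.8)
The plan is to reformulate the minimization that defines $\mathcal{E}(p+q)$ over the $q$-dependent space $U(p+q)$ as a minimization over the fixed space $U(p)$, using the diffeomorphism $\chi$ from Assumption \ref{ass:ThereExistsDiffeo}. Precisely, by \eqref{eq:ConditionsForDiffeo} the map $v \mapsto v\circ\chi(q,\cdot)^{-1}$ is a linear isomorphism $U(p)\to U(p+q)$, so
\[
	\mathcal{E}(p+q) = \min_{w\in U(p+q)} J(w) = \min_{v\in U(p)} J\!\left(v\circ\chi(q,\cdot)^{-1}\right) = \min_{v\in U(p)} \Jstar(q,v).
\]
Thus $\mathcal{E}(p+\cdot)$ is the value function of a parametrized family of quadratic minimization problems on the \emph{fixed} Hilbert space $U(p)$, with $\Jstar \in C^{k-2}(\mathcal{B}\times U(p);\R)$ by the preceding lemma. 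The strategy is then the standard one for value functions of strictly convex problems: show the minimizer $v(q)=\argmin_{v\in U(p)}\Jstar(q,v)$ depends $C^{k-2}$-smoothly on $q$ via the Implicit Function Theorem applied to the first-order optimality condition $\partial_v \Jstar(q,v)=0$ in $U(p)$, and then differentiate $\mathcal{E}(p+q)=\Jstar(q,v(q))$, using that the derivative in the $v$-slot vanishes at the minimizer.

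The steps, in order, are as follows. First, record that for each fixed $q\in\mathcal{B}$ the functional $v\mapsto\Jstar(q,v)$ is quadratic and, for $\mathcal{B}$ small enough, coercive and strictly convex on $U(p)$ — this follows because $\Jstar(0,\cdot)=J$ is coercive on $U$ (the well-posedness underlying Theorem \ref{thm:ExistenceRegularity}, i.e. \cite[Theorem 5.1]{EllHer20-A}) and the coefficients $b,G^{-1}$ are uniformly close to those of $J$ when $B(q)$ is uniformly close to $\I$; hence $v(q)$ exists and is unique, and $v(0)=u$. Second, write the optimality condition: $v(q)\in U(p)$ is characterized by $\partial_v\Jstar(q,v(q))[\phi]=0$ for all $\phi\in U(p)$, i.e. $F(q,v(q))=0$ where $F\colon\mathcal{B}\times U(p)\to U(p)^*$ is $F(q,v)=\partial_v\Jstar(q,v)$. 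Since $\Jstar\in C^{k-2}$ in $(q,v)$ jointly, $F\in C^{k-3}$; but because $\Jstar$ is quadratic in $v$, $F$ is affine in $v$ and $\partial_v F(q,v)$ is the bounded linear operator $U(p)\to U(p)^*$ associated to the bilinear form of $\Jstar(q,\cdot)$, which is $C^{k-2}$ in $q$ and coercive (hence invertible) uniformly on a possibly smaller ball. Third, apply the Implicit Function Theorem to $F$ at $(0,u)$: shrinking $\mathcal{B}$ to $\hat{\mathcal{B}}$, there is a unique $C^{k-2}$ map $q\mapsto v(q)$ with $F(q,v(q))=0$ and $v(0)=u$. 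Fourth, set $\mathcal{E}(p+q)=\Jstar(q,v(q))$ and differentiate in $q$ along a direction $e$:
\[
	\partial_e\mathcal{E}(p+q)\big|_{q=0} = \partial_e\Jstar(0,u) + \partial_v\Jstar(0,u)[\partial_e v(0)] = \partial_e\Jstar(0,u),
\]
the last term vanishing because $u$ is the minimizer of $\Jstar(0,\cdot)$ over $U(p)$ and $\partial_e v(0)\in U(p)$ (the tangent space to the affine space $U(p)$ is $U_0(p)$, on which the first variation of $J$ at $u$ also vanishes by optimality — both phrasings work). Chaining the $C^{k-2}$ regularity of $q\mapsto v(q)$ with the $C^{k-2}$ regularity of $\Jstar$ gives $\mathcal{E}(p+\cdot)\in C^{k-2}(\hat{\mathcal{B}};\R)$, which for $k\geq 3$ in particular yields differentiability and the stated formula.

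The main obstacle is the uniform invertibility of $\partial_v F(q,\cdot)$, equivalently the uniform coercivity of the bilinear form underlying $\Jstar(q,\cdot)$ on $U(p)$ for $q$ in a neighbourhood of $0$. One must check that the perturbation from the $q=0$ form (whose coercivity on $U$, and a fortiori on the closed subspace $U(p)$, is known) is small in operator norm; this uses that $B(q)\to\I$ in $C^0$ together with control of $\nabla_\Gamma\chi(q,\cdot)$, so that $b(q)\to 1$ and $G(q,\cdot)^{-1}\to\I$ uniformly, making the top-order term $\tfrac{\kappa}{2}\int_\Gamma b^{-1}(\divg(bG^{-1}\nabla_\Gamma v))^2$ a small perturbation of $\tfrac{\kappa}{2}\int_\Gamma(\lb{\Gamma}v)^2$ in the relevant norm. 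A secondary subtlety, purely bookkeeping, is that $U(p)$ is an \emph{affine} space, not linear; one handles this by fixing any $u_p\in U(p)$ and working with the linear space $U_0(p)$, noting $v(q)-u_p\in U_0(p)$ and that the constraint is $q$-independent after the $\chi$-pullback by the very construction \eqref{eq:ConditionsForDiffeo}. Everything else — the chain rule to pass from $\partial_e\Jstar$ and $\partial_e v$ back to $\partial_e\mathcal{E}$, and the loss of at most two derivatives from $\chi\in C^k$ to $\Jstar\in C^{k-2}$ — is routine.
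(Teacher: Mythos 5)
Your proposal is correct and follows essentially the same route as the paper: pull the minimisation back to the fixed space via $\chi$, shift by the minimiser $u$ to work on the linear space $U_0(p)$, apply the Implicit Function Theorem to the optimality condition $F(q,v)=D_v\Jstar(q,u+v)=0$ using coercivity of $a(\cdot,\cdot)$, and then differentiate the value function $\Jstar(q,\hat u(q))$, with the $v$-slot term vanishing by optimality. The only difference is cosmetic: you spell out the uniform coercivity of $\Jstar(q,\cdot)$ for small $q$, which the paper simply asserts.
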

\begin{proof}
	In the following we suppress the dependence on $p$ and write $u=u(p), U_0=U_0(p)$. Define    $\mathcal J\in C^{k-2}(\mathcal{B}\times U_0;\R)$ by $$\mathcal J(q,v) := \Jstar(q,u+v)~~\mbox{for}~~~ (q,v) \in \mathcal{B} \times U_0.$$
	For fixed $q$, $\mathcal J(q,\cdot)$ is a quadratic functional and by the definition of $u$ we have that the minimiser of the  functional $\mathcal J(0,v)$ over $U_0$ is given by $v=0$.
	Define  $F\in C^{k-2}(\mathcal{B}\times U_0;U_0^*)$ by $$ F(q,v):= D_v\mathcal J(q,v)$$
	 where, for fixed $q$, $D_v
	\mathcal J$ is  the first  variation of $\mathcal J(q,\cdot)$  over $U_0$. For each $(q,v)$, $F(q,v)$ is a linear functional.
	Since $J(0,v)$ attains minima at $v=0$, it follows that $F(0,0) = D_v\mathcal{J}(0,0) = 0 \in U_0^*$.
Furthermore, the first variation of $F$ at $(0,0)$,
\begin{align*}
	D_vF(0,0)\colon (\xi,\eta) \in U_0\times U_0 \mapsto D_v F(0,0)[\xi,\eta]
	= D_{vv} \mathcal{J}(0,0)[\xi,\eta]
	=&
	a(\xi,\eta),
\end{align*}
is a strictly coercive bilinear form over $U_0\times U_0$.
As a consequence, it follows that the map $U_0\ni v \mapsto D_vF(0,v) \in U_0^*$ is invertible.
	
	It therefore holds that we may apply the implicit function theorem, Theorem \ref{Thm:ImplictFnThm}, to $f=F$, with $(a,b) = (0,0)$, $\mathcal X= \prod_{i=1}^N (\R \times T_{X_{\mathcal{G}_i}}\Gamma)$, $\mathcal Y= U_0$, $\mathcal Z=\mathcal Y^*$ and $\Omega = \mathcal{B}\times \mathcal Y$.
	As such, there is neighbourhood of $0$, $\hat{\mathcal{B}}=V\subset \mathcal{B}$ and a function $\hat{v}\in C^{k-2}(\hat{\mathcal{B}};U_0(p))$ such that $\hat{v}(0)=0$ and $F(q,\hat{v}(q))=0$.
	That is to say $\Jstar_v(q,\hat{v}(q)+u) = 0$, so $\hat{v}(q)+u$ is a critical point of $\Jstar(q,\cdot)$.
	By coercivity of $\Jstar(q,\cdot)$ over $U(p)$, $\hat{u}(q):=\hat{v}(q)+u$ is the unique minimiser.
	Hence
	\[
		\mathcal{E}(p+q)
		= \min_{\eta\in U(p+q)}J(\eta)
		= \min_{\eta \in U(p)} \Jstar(q,\eta)
		= \Jstar(q,\hat{u}(q)).
	\]
	Since $\hat{u}\in C^{k-2}(\hat{\mathcal{B}};U(p))$, $\Jstar\in C^{k-2}(\mathcal{B}\times U(p);\R)$, it follows $\mathcal{E}(p+\cdot) \in C^{k-2}(\hat{\mathcal{B}};\R)$.
	Taking the derivative of $\mathcal{E}$ gives
	\[
		\partial_e \mathcal{E}(p)
		=
		\frac{\dee}{\dee t}\mathcal{E}(p+te)|_{t=0}
		=
		\frac{\dee}{\dee t}\Jstar(te,u)|_{t=0} + \frac{\dee}{\dee t}\Jstar(0,\hat{u}(te))|_{t=0}= \partial_e \Jstar(0,u),
	\]
	where $\frac{\dee}{\dee t}\Jstar(0,\hat{u}(te))|_{t=0} = D_v \Jstar(0,u)\left[ \frac{\dee}{\dee t}\hat{u}(te)|_{t=0}\right]$ vanishes since $D_v\Jstar(0,u)=0$.
\end{proof}

\begin{remark}
Although  $\Jstar$  depends on the choice of $\chi$,
the derivative $\partial_e \mathcal{E}(p)$  is independent of the choice of $\chi$.
One may consider  a different diffeomorphism, say, $\tilde{\chi}$ with energy $\tilde{\Jstar}$, one would then have that
\[
	\min_{\eta\in U(p+q)} \Jstar(q,\eta) = \min_{\tilde{\eta} \in U(p+q)} \tilde{\Jstar}(q,\tilde{\eta})
\]
and arrive at $\partial_e \mathcal{E}(p) = \partial_e \tilde{\Jstar}(0,u) = \partial_e \Jstar(0,u)$.
\end{remark}

\subsection{An explicit formula for the derivative}


It is convenient to define the following.
\begin{definition}\label{def:DevineV}
	Define the tangential vector field $V\colon \prod_{i=1}^N\left(\R \times  T_{X_{\mathcal{G}_i}}\Gamma \right)\times \Gamma\to \R^3$ by
	\[
		V(e,x):= \partial_e \chi(0,x),
	\]
	which is tangential in the sense that $V(e,x) \in T_x \Gamma$ for all $(e,x) \in \prod_{i=1}^N\left(\R \times  T_{X_{\mathcal{G}_i}}\Gamma \right)\times  \Gamma$.
\end{definition}
\begin{proposition}
\label{prop:RawDerivativeFormula}
	Given $e \in \prod_{i=1}^N\left(\R \times  T_{X_{\mathcal{G}_i}}\Gamma \right)$,
	set $\mathcal{A}:= (\divg V)I - (\nabla_\Gamma V + \nabla_\Gamma V^T)$ then for $\eta\in H^2(\Gamma)$ 

	\begin{align*}
		\partial_e \Jstar(0,\eta)
		=&
		\kappa \int_\Gamma  (\mathcal{A}: D_\Gamma^2 \eta - \Delta_\Gamma V \cdot \nabla_\Gamma \eta)\Delta_\Gamma \eta \\
		&
		-\frac{\kappa}{R^2}\int_\Gamma (V \cdot \nabla_\Gamma	 \eta + \frac{1}{2}\divg V  \Delta_\Gamma \eta)\Delta_\Gamma \eta
		\\
		&+
		\left(\frac{\sigma}{2} - \frac{\kappa}{R^2}\right)\int_\Gamma \nabla_\Gamma \eta \cdot\mathcal{A}\nabla_\Gamma \eta
		-
		\frac{\sigma}{R^2}\int_\Gamma \divg V \eta^2.
	\end{align*}
\end{proposition}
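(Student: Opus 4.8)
The plan is to differentiate the explicit representation of $\Jstar$ given in the previous lemma, namely
\[
\Jstar(q,v)=\frac{\kappa}{2}\int_\Gamma \frac1b\bigl(\divg(bG^{-1}\nabla_\Gamma v)\bigr)^2 + \Bigl(\frac{\sigma}{2}-\frac{\kappa}{R^2}\Bigr)\int_\Gamma b\,\nabla_\Gamma v\cdot G^{-1}\nabla_\Gamma v - \frac{\sigma}{R^2}\int_\Gamma b\,v^2,
\]
with respect to $q$ at $q=0$, in the direction $e$. Since $v=\eta$ is held fixed, all the $q$-dependence is in $B=B(q,\cdot)$, hence in $G=B^TB$ and $b=\det B$. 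So the first step is to record the three elementary derivative identities at $q=0$: using $B(0,\cdot)=\I$ (as a map on $\R^3$ restricted to act as the identity on $T_x\Gamma$, with the $\nu\otimes\nu$ block), one has $\partial_e B(0,\cdot)=\nabla_\Gamma V + (\nu\cdot V)\,\nu\otimes\nu$-type terms, but since $V$ is tangential the normal block contributes only through $\nu\circ\chi$; the key outcomes are $\partial_e b(0,\cdot)=\divg V$ (this is the standard formula for the derivative of the Jacobian of a tangential flow, matching the remark in the excerpt that $\partial_e\det(\cdots)=\divg V$), $\partial_e G(0,\cdot)=\nabla_\Gamma V+\nabla_\Gamma V^T$ restricted to tangential directions, and hence $\partial_e G^{-1}(0,\cdot)=-(\nabla_\Gamma V+\nabla_\Gamma V^T)$. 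Combining, $\partial_e(bG^{-1})(0,\cdot)=(\divg V)\I-(\nabla_\Gamma V+\nabla_\Gamma V^T)=\mathcal A$ on tangent vectors, which is exactly the matrix named in the statement.

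**Assembling the three terms.** With those identities in hand I would differentiate term by term. For the bending term, write $w:=bG^{-1}\nabla_\Gamma\eta$, so at $q=0$, $w=\nabla_\Gamma\eta$ and $\divg(bG^{-1}\nabla_\Gamma\eta)=\Delta_\Gamma\eta$; then
\[
\partial_e\Bigl[\tfrac1b(\divg(bG^{-1}\nabla_\Gamma\eta))^2\Bigr]\Big|_0 = -(\divg V)(\Delta_\Gamma\eta)^2 + 2\,\Delta_\Gamma\eta\;\divg\bigl(\partial_e(bG^{-1})(0,\cdot)\nabla_\Gamma\eta\bigr) = -(\divg V)(\Delta_\Gamma\eta)^2 + 2\Delta_\Gamma\eta\,\divg(\mathcal A\nabla_\Gamma\eta).
\]
One then expands $\divg(\mathcal A\nabla_\Gamma\eta)=\mathcal A:D_\Gamma^2\eta + (\divg\mathcal A)\cdot\nabla_\Gamma\eta$ and computes $\divg\mathcal A$: since $\divg((\divg V)\I)=\nabla_\Gamma\divg V$ and $\divg(\nabla_\Gamma V+\nabla_\Gamma V^T)=\Delta_\Gamma V+\nabla_\Gamma\divg V$ (up to curvature terms that must be tracked on $\Gamma$ via \eqref{eq:SurfaceCommutate}), one gets $\divg\mathcal A=-\Delta_\Gamma V$ plus possible curvature corrections. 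For the gradient term the derivative is immediately $\int_\Gamma\nabla_\Gamma\eta\cdot\mathcal A\nabla_\Gamma\eta$, and for the lowest-order term it is $-\frac{\sigma}{R^2}\int_\Gamma(\divg V)\eta^2$. Collecting and separating the $\Delta_\Gamma\eta$ coefficient into the part proportional to $\kappa$ and the part proportional to $\kappa/R^2$ (recalling the bending coefficient in $J$ is $\kappa/2$ of the square, and that the gradient coefficient already carries the $-\kappa/R^2$), one recovers the four displayed lines, with the $-\frac{\kappa}{R^2}(V\cdot\nabla_\Gamma\eta+\frac12\divg V\,\Delta_\Gamma\eta)\Delta_\Gamma\eta$ term arising from the cross-differentiation of the $b$ and $G^{-1}$ factors in the $-\kappa/R^2$ piece together with the corresponding piece pulled out of the bending term's lower-order contribution.

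**Main obstacle.** The routine part is the product rule; the genuinely delicate step is the surface calculus, specifically computing $\partial_e B$, $\partial_e G^{-1}$ and $\divg\mathcal A$ correctly \emph{on the sphere}, where the Hessian is not symmetric and the commutation relation \eqref{eq:SurfaceCommutate} introduces curvature terms through $\mathcal H=\nabla_\Gamma\nu$. One must verify that the term $\nu\circ\chi(q,\cdot)\otimes\nu$ in $B$ differentiates, via $\partial_e(\nu\circ\chi)(0,\cdot)=\mathcal H V$ (tangential since $\mathcal H$ annihilates $\nu$), in such a way that its contribution to $\partial_e G$ and $\partial_e b$ either vanishes or is absorbed — this is what makes $\partial_e b=\divg V$ clean — and that all curvature terms generated in $\divg(\mathcal A\nabla_\Gamma\eta)$ reorganize precisely into the stated $-\Delta_\Gamma V\cdot\nabla_\Gamma\eta$ plus the $1/R^2$ corrections rather than leaving spurious curvature residue. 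I would handle this by doing the computation in the ambient $\R^3$ coordinates using the decomposition \eqref{eq:decompose} and the explicit $\pi(X)=RX/|X|$ for the sphere, then projecting, and cross-checking the flat-domain limit $R\to\infty$ against the formula in \cite{GraKie17}. The regularity $\eta\in H^2(\Gamma)$ is enough to make every integral here well defined since no more than two derivatives of $\eta$ appear and $V$ is $C^{k-1}$ with $k\ge 3$.
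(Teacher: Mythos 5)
Your overall strategy coincides with the paper's: differentiate the pulled-back functional term by term using $B(0)=\I$, $\partial_e\det B=\divg V$, and $\partial_e(bG^{-1})$ at $q=0$. The genuine gap is the step where you assert $\partial_e(bG^{-1})(0,\cdot)=\mathcal{A}$ ``on tangent vectors'' and then work only with $\divg(\mathcal{A}\nabla_\Gamma\eta)$. In fact $\partial_e B(0,\cdot)=\nabla_\Gamma V+(\mathcal{H}V)\otimes\nu$, so
\[
\partial_e\bigl(bG^{-1}\bigr)(0,\cdot)=\mathcal{A}-(\mathcal{H}V)\otimes\nu-\nu\otimes(\mathcal{H}V),
\]
and the discarded blocks do not act trivially in the bending term: applied to the tangential vector $\nabla_\Gamma\eta$ they leave the normal component $-\nu\,(\mathcal{H}V\cdot\nabla_\Gamma\eta)$, whose surface divergence contributes $-H\,\mathcal{H}V\cdot\nabla_\Gamma\eta$, while their contraction with the non-symmetric surface Hessian (using $\nu\cdot\nabla_\Gamma\eta=0$, which gives $D_\Gamma^2\eta\,\nu=-\mathcal{H}\nabla_\Gamma\eta$) contributes $+\mathcal{H}^2\nabla_\Gamma\eta\cdot V$. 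On the sphere $\mathcal{H}=P_\Gamma/R$ and $H=2/R$, so these combine to $-\tfrac{1}{R^2}V\cdot\nabla_\Gamma\eta$, which is precisely the $-\tfrac{\kappa}{R^2}\int_\Gamma V\cdot\nabla_\Gamma\eta\,\Delta_\Gamma\eta$ term of the statement; with your simplification that term is simply lost. Moreover your alternative accounting for the second line is incorrect: the $(\tfrac{\sigma}{2}-\tfrac{\kappa}{R^2})\int_\Gamma b\,\nabla_\Gamma v\cdot G^{-1}\nabla_\Gamma v$ term produces no such cross term, since the normal blocks are annihilated when sandwiched between two tangential gradients, so its derivative is exactly $(\tfrac{\sigma}{2}-\tfrac{\kappa}{R^2})\int_\Gamma\nabla_\Gamma\eta\cdot\mathcal{A}\nabla_\Gamma\eta$; and the $(\divg V)(\Delta_\Gamma\eta)^2$ contribution comes solely from $\partial_e(1/b)=-\divg V$ in the bending term, with coefficient $\kappa/2$ (compare Corollary \ref{cor:ReformulationSOS}), not from any ``cross-differentiation'' in the $\kappa/R^2$ piece.

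So the ``main obstacle'' you flag is not a verification to be deferred; it is the substance of the proof. The paper's argument consists exactly of keeping the $(\mathcal{H}V)\otimes\nu$ and $\nu\otimes(\mathcal{H}V)$ blocks, computing $\divg\,\partial_e(\det B\,G^{-1})=-\Delta_\Gamma V-\nu\,(\nabla_\Gamma\cdot\mathcal{H})\cdot V-H\mathcal{H}V$ (the normal part dying against the tangential $\nabla_\Gamma\eta$), computing the contraction with $D_\Gamma^2\eta$ as above via \eqref{eq:SurfaceCommutate}, and only then inserting the sphere values of $H$ and $\mathcal{H}$, the combination $\mathcal{H}^2-H\mathcal{H}=-P_\Gamma/R^2$ producing the $1/R^2$ correction. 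Until you carry out this curvature bookkeeping (your proposed ambient-coordinate computation could do it), your sketch establishes only the flat-space limit of the formula, not the stated result.
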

\begin{proof}
	We will make use of the fact that $B(0) = \I$ and $\det(B(0)) =1$.
	To simplify notation when taking derivative $\partial_e$, we assume that we are evaluating at $q=0$, if there is no argument given.
	The product rule gives
	\begin{equation}\label{eq:FirstForcingProof1}\begin{split}
		\partial_e \Jstar(0,\eta) =& \frac{\kappa}{2}\int_\Gamma 2 \divg \frac{\dee}{\dee t}(\det(B(te)) G(te)^{-1} \nabla_\Gamma \eta)|_{t=0}\Delta_\Gamma \eta -  \left(\Delta_\Gamma \eta \right)^2 \frac{\dee}{\dee t}\det(B(te))|_{t=0}
		\\
		&+ \left( \frac{\sigma}{2} -\frac{\kappa}{R^2}\right)\int_\Gamma \nabla_\Gamma \eta \cdot \frac{\dee}{\dee t}(\det(B(te)) G(te)^{-1})|_{t=0} \nabla_\Gamma \eta
		\\
		&-\frac{\sigma}{R^2}\int_\Gamma \frac{\dee}{\dee t}\det(B(te))|_{t=0} \eta^2.
	\end{split}\end{equation}
	Where we calculate
	\begin{align*}
		\partial_e B
		=&
		\nabla_\Gamma V + (\mathcal{H} V) \otimes \nu,
		\\
		\partial_e \det(B) =& 
		\divg V,
		\\
	\partial_e B^{-1}=&
	 -\nabla_\Gamma V - (\mathcal{H}V) \otimes \nu.
	\end{align*}
	Since $G := B^T B$ one has,
	\begin{align*}
		\frac{\dee}{\dee t} (\det(B(te))G(te)^{-1})|_{t=0}
		=& (\divg V)\, \I -\nabla_\Gamma V - (\mathcal{H}V) \otimes \nu -\nabla_\Gamma V^T - \nu\otimes(\mathcal{H}V).
	\end{align*}
	We are also required to calculate the surface divergence of the above quantity
	\begin{align*}
		\divg \partial_e (\det(B)G^{-1}&)
		\\
		=&
		\divg \left((\divg V)\, \I -\nabla_\Gamma V - (\mathcal{H}V) \otimes \nu -\nabla_\Gamma V^T - \nu\otimes(\mathcal{H} V)\right)
		\\
		=&
		\sum_{k=1}^{n+1} (\nabla_\Gamma \underline{D}_k - \underline{D}_k\nabla_\Gamma )V_k - \Delta_\Gamma V - \divg( (\mathcal{H}V) \otimes \nu + \nu \otimes (\mathcal{H}V) )
		\\
		=&
		-\Delta_\Gamma V - H \mathcal{H}V
		+ \sum_{k=1}^{n+1} (\mathcal{H}\nabla_\Gamma V_k)_k \nu - (\mathcal{H}\nabla_\Gamma V_k) \nu_k - \underline{D}_k(\nu (\mathcal{H}V)_k ) .
	\end{align*}
	It is possible to see
	\[
		\sum_{k=1}^{n+1} (\mathcal{H}\nabla_\Gamma V_k)_k
		=
		\mathcal{H}: \nabla_\Gamma V,
	\]
	by using that $V \cdot \nu =0$,
	\begin{align*}
		\sum_{k=1}^{n+1} (\mathcal{H}\nabla_\Gamma V_k)_j \nu_k
		=
		-(\mathcal{H}^2 V)_j.
	\end{align*}
	Furthermore,
	\begin{align*}
		\sum_{k=1}^{n+1} \underline{D}_k (\nu (\mathcal{H}V)_k)
		=&
		\mathcal{H}^2 V + (\mathcal{H}: \nabla_\Gamma V + (\nabla_\Gamma \cdot \mathcal{H})\cdot V)\nu.
	\end{align*}
	Together this gives,
	\begin{align*}
		\divg(\partial_e (\det(B)G^{-1}))
		=
		-\Delta_\Gamma V - \nu (\nabla_\Gamma \cdot \mathcal{H})\cdot V - H \mathcal{H}V,
	\end{align*}
	where the middle term will vanish when multiplied against a tangential vector field.
	We are left with
	\begin{align*}
		\partial_e (\det(B(q))G^{-1}) : D_\Gamma^2 \eta
		=
		\mathcal{A}:D_\Gamma^2 \eta &- (\mathcal{H}V)\otimes \nu :D_\Gamma^2 \eta \\
		&- \nu \otimes (\mathcal{H}V) : D_\Gamma^2 \eta,
	\end{align*}
	where one may recall that for $b,c$ vectors and matrix $A$, $A:(b\otimes c) = b^T A c$.
	Thus
	\[
		\partial_e (\det(B)G^{-1}) : D_\Gamma^2 \eta
		= 
		\mathcal{A}:D_\Gamma^2 \eta + \mathcal{H}^2\nabla_\Gamma \eta \cdot V,
	\]
	which completes the result when evaluating $H$ and $\mathcal{H}$ for a sphere.
\end{proof}

By Theorem \ref{thm:applicationOfIFT}, when evaluating this at the solution of Problem \ref{prob:SingleParticleEnergyMinimisation}, we will obtain the derivative we seek.
We notice that it might be convenient to integrate by parts to remove the surface Hessian.  
This will give an alternate formula which is better suited for the numerical methods considered in \cite{EllFriHob19,EllHer20-A}.
\begin{corollary}\label{cor:ReformulationSOS}
	Under the assumptions of Proposition \ref{prop:RawDerivativeFormula} it may be seen that, for $\eta \in W^{3,p}(\Gamma),\, p<2$,
	\begin{equation}\label{eq:DerivativeFullForm}\begin{split}
		\partial_e \Jstar(q,\eta)|_{q=0}
		=&
		-\kappa \int_\Gamma 
		\frac{1}{2}\left(\divg V \right)\left(\Delta_\Gamma \eta\right) ^2 +  \nabla_\Gamma \Delta_\Gamma \eta\cdot \mathcal{A} \nabla_\Gamma \eta
		\\
		&+
		\frac{1}{2}\left(\sigma - \frac{2\kappa}{R^2}\right)\int_\Gamma \nabla_\Gamma \eta \cdot \mathcal{A}\nabla_\Gamma \eta
		-
		\frac{\sigma}{R^2}\int_\Gamma \left(\divg V\right) \eta^2.
	\end{split}\end{equation}
\end{corollary}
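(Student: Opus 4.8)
The plan is to start from the identity of Proposition \ref{prop:RawDerivativeFormula} and integrate by parts in the one term that still carries the surface Hessian, namely $\kappa\int_\Gamma(\mathcal{A}:D_\Gamma^2\eta)\Delta_\Gamma\eta$, moving one tangential derivative off $D_\Gamma^2\eta$ and onto $\mathcal{A}\,\Delta_\Gamma\eta$. Because the right-hand side of \eqref{eq:DerivativeFullForm} only makes classical sense once $\nabla_\Gamma\Delta_\Gamma\eta$ is an honest function, the hypothesis $\eta\in W^{3,p}(\Gamma)$ with $p<2$ — exactly the regularity furnished by Theorem \ref{thm:ExistenceRegularity} for the solution $u$ — is used throughout. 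I would first record that every integrand appearing in Proposition \ref{prop:RawDerivativeFormula} and in \eqref{eq:DerivativeFullForm} lies in $L^1(\Gamma)$ and depends continuously on $\eta$ in the relevant topology: on the two-dimensional $\Gamma$ one has $W^{3,p}\hookrightarrow W^{2,2}=H^2$ as well as $W^{3,p}\hookrightarrow W^{2,r}\hookrightarrow C^0(\Gamma)$ and $W^{1,p}\hookrightarrow L^{r}(\Gamma)$ for some $r>2$, while $\mathcal{A}$, built from $V=\partial_e\chi(0,\cdot)\in C^{k-1}(\Gamma)$ with $k\geq 3$, is bounded with bounded derivatives; hence all the products land in $L^1$ with room to spare. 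Since the Proposition formula is already the value of $\partial_e\Jstar(0,\eta)$ for every $\eta\in H^2(\Gamma)$, it then suffices to prove that the two expressions coincide, and by density of $C^\infty(\Gamma)$ in $W^{3,p}(\Gamma)$ it is enough to do so for smooth $\eta$ and pass to the limit.

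For smooth $\eta$ I would apply the surface Gauss–Green formula for a closed hypersurface (no boundary term), $\int_\Gamma (\underline{D}_i f)\,g = -\int_\Gamma f\,\underline{D}_i g + \int_\Gamma f g\,H\nu_i$ from \cite{DziEll13}, taking $f=\mathcal{A}_{ij}\Delta_\Gamma\eta$ and $g=\underline{D}_j\eta$ and summing over $i,j$. Using that $\mathcal{A}$ is symmetric, this gives
\[
\kappa\int_\Gamma(\mathcal{A}:D_\Gamma^2\eta)\Delta_\Gamma\eta
= -\kappa\int_\Gamma \nabla_\Gamma\eta\cdot(\divg\mathcal{A})\,\Delta_\Gamma\eta
-\kappa\int_\Gamma \nabla_\Gamma\Delta_\Gamma\eta\cdot\mathcal{A}\nabla_\Gamma\eta
+\kappa H\int_\Gamma (\Delta_\Gamma\eta)\,\nabla_\Gamma\eta\cdot\mathcal{A}\nu ,
\]
the middle term being one of the terms sought in \eqref{eq:DerivativeFullForm}. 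It then remains to compute $\divg\mathcal{A}$ and $\mathcal{A}\nu$. Here I would use the definition $\mathcal{A}=(\divg V)\I-(\nabla_\Gamma V+\nabla_\Gamma V^{T})$, the non-commutation identity \eqref{eq:SurfaceCommutate} applied to $\underline{D}_i\underline{D}_jV_k$, the tangentiality of $V$ — from $\nabla_\Gamma(V\cdot\nu)=0$ one gets $(\nabla_\Gamma V^{T})\nu=-\mathcal{H}V$ and $(\nabla_\Gamma V)\nu=0$ — and finally the spherical identities $\mathcal{H}=R^{-1}P_\Gamma$, $H=2/R$, $\mathcal{H}V=R^{-1}V$, which collapse the extended Weingarten map to a scalar multiple of the projection. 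A short computation then yields $\divg\mathcal{A}=-\Delta_\Gamma V + R^{-2}V + R^{-1}(\divg V)\nu$ and $\mathcal{A}\nu=R^{-1}V+(\divg V)\nu$.

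Substituting these back and contracting against the tangential field $\nabla_\Gamma\eta$ (so every $\nu$-component is annihilated), the $-\Delta_\Gamma V$ contribution of $\divg\mathcal{A}$ cancels the $-\kappa\int_\Gamma(\Delta_\Gamma V\cdot\nabla_\Gamma\eta)\Delta_\Gamma\eta$ already present in Proposition \ref{prop:RawDerivativeFormula}, and the three contributions proportional to $R^{-2}(V\cdot\nabla_\Gamma\eta)\Delta_\Gamma\eta$ — the one from $\divg\mathcal{A}$, the one from the term $-\kappa R^{-2}\int_\Gamma(V\cdot\nabla_\Gamma\eta)\Delta_\Gamma\eta$ of the Proposition, and the $+2\kappa R^{-2}$ piece produced by $\kappa H\,\mathcal{A}\nu$ — sum to zero. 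What survives is $-\kappa\int_\Gamma\nabla_\Gamma\Delta_\Gamma\eta\cdot\mathcal{A}\nabla_\Gamma\eta$, the curvature-free $(\divg V)(\Delta_\Gamma\eta)^2$-term carried over from Proposition \ref{prop:RawDerivativeFormula}, the unchanged $(\tfrac{\sigma}{2}-\tfrac{\kappa}{R^2})\int_\Gamma\nabla_\Gamma\eta\cdot\mathcal{A}\nabla_\Gamma\eta$, and the unchanged $-\tfrac{\sigma}{R^2}\int_\Gamma(\divg V)\eta^2$; rewriting $\tfrac{\sigma}{2}-\tfrac{\kappa}{R^2}=\tfrac12(\sigma-\tfrac{2\kappa}{R^2})$ gives precisely \eqref{eq:DerivativeFullForm}. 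Passing to the limit over smooth $\eta$ extends the identity to all $\eta\in W^{3,p}(\Gamma)$, $p<2$. (One may additionally note that, since $V=\partial_e\chi(0,\cdot)$ with $\chi$ as in Assumption \ref{ass:ThereExistsDiffeo}, $\divg V\equiv 0$, so the two $\divg V$-terms vanish identically and one in fact recovers the reduced formula; they are retained only for transparency of the derivation.)

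I expect the main obstacle to be the careful bookkeeping of the curvature and commutator terms: the surface Hessian is not symmetric, so \eqref{eq:SurfaceCommutate} must be applied with attention to index placement when computing $\divg\mathcal{A}$, and the several lower-order terms proportional to $R^{-2}(V\cdot\nabla_\Gamma\eta)\Delta_\Gamma\eta$ must be tracked with the correct signs for the cancellation to become visible. The functional-analytic point — legitimising the integration by parts at the borderline regularity $W^{3,p}(\Gamma)$ with $p<2$ — is genuine but routine once the two-dimensional Sobolev embeddings above are in place, since after the embeddings no product is worse than a function in $L^p$ against a bounded function.
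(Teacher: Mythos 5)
Your argument is correct, and I checked the key computations: on the sphere $\mathcal{H}=P_\Gamma/R$, $H=2/R$ one indeed gets $\divg \mathcal{A} = -\Delta_\Gamma V + R^{-2}V + R^{-1}(\divg V)\nu$ and $\mathcal{A}\nu = R^{-1}V + (\divg V)\nu$, and with the curvature term $\int_\Gamma H\,(\mathcal{A}\nu)\cdot\nabla_\Gamma\eta\,\Delta_\Gamma\eta$ from the surface Gauss--Green formula the $\Delta_\Gamma V$ terms and the three $R^{-2}(V\cdot\nabla_\Gamma\eta)\Delta_\Gamma\eta$ contributions cancel exactly as you claim, leaving \eqref{eq:DerivativeFullForm}; the density/embedding justification at the $W^{3,p}$, $p<2$, level is also fine. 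The paper proceeds differently in one respect: it integrates by parts directly in \eqref{eq:FirstForcingProof1}, i.e.\ on $\int_\Gamma \divg\bigl(\partial_e(\det(B)G^{-1})\nabla_\Gamma\eta\bigr)\Delta_\Gamma\eta$ \emph{before} expanding $\partial_e(\det(B)G^{-1})$, and then "follows through" the Proposition's computation; in that order the normal parts $(\mathcal{H}V)\otimes\nu + \nu\otimes(\mathcal{H}V)$ are annihilated when contracted against the tangential fields $\nabla_\Gamma\eta$, $\nabla_\Gamma\Delta_\Gamma\eta$, and the Gauss--Green curvature term vanishes identically, so no cancellation bookkeeping is needed. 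Your route — integrating by parts on the already-expanded formula of Proposition \ref{prop:RawDerivativeFormula} — costs the explicit computation of $\divg\mathcal{A}$, $\mathcal{A}\nu$ and the tracking of the commutator/curvature terms, but has the virtue of serving as an independent consistency check of the Proposition; note in passing that your (correct) accounting carries the $(\divg V)(\Delta_\Gamma\eta)^2$ term with coefficient $-\kappa/2$, consistent with \eqref{eq:FirstForcingProof1} and with \eqref{eq:DerivativeFullForm}, whereas the printed statement of the Proposition places it inside the $-\kappa/R^2$ bracket, which appears to be a typo.
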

\begin{proof}
This follows from integration by parts in \eqref{eq:FirstForcingProof1} and following through with the proof above.
The integration by parts is admissible by the regularity of $\eta$.
\end{proof}


By the additional regularity shown in Theorem \ref{thm:ExistenceRegularity}, we see that we may pick $\eta = \argmin_{v \in U(p)}J(v)$ in the above.
This gives the main result of the work which follows from the previous results.
\begin{theorem}
	Let $p \in \Lambda^\circ$, $u = \argmin_{v\in U(p)} J(v)$ and $\mathcal{A}:= (\divg V)I - \nabla_\Gamma V - \nabla_\Gamma V^T$, then 
	\begin{equation}\label{eq:explicitDerivativeFormula}\begin{split}
		\partial_e \mathcal{E}(p)
		=&
		-\kappa \int_\Gamma 
		\frac{1}{2}\left(\divg V \right)\left(\Delta_\Gamma u\right) ^2 +  \nabla_\Gamma \Delta_\Gamma u\cdot \mathcal{A} \nabla_\Gamma u
		\\
		&+
		\frac{1}{2}\left(\sigma - \frac{2\kappa}{R^2}\right)\int_\Gamma \nabla_\Gamma u \cdot \mathcal{A}\nabla_\Gamma u
		-
		\frac{\sigma}{R^2}\int_\Gamma \left(\divg V\right) u^2.\end{split}
	\end{equation}
\end{theorem}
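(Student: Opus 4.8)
The plan is to assemble the statement from three ingredients that are already in place: the reduction of the configurational derivative to the derivative of the transformed energy (Theorem \ref{thm:applicationOfIFT}), the integrated-by-parts representation of $\partial_e \Jstar(0,\cdot)$ (Corollary \ref{cor:ReformulationSOS}), and the elliptic regularity $u \in W^{3,p}(\Gamma)$ for every $p \in (1,2)$ (Theorem \ref{thm:ExistenceRegularity} together with the extension to $u=u(p)$ noted after Definition \ref{def:solution}).

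First I would invoke Theorem \ref{thm:applicationOfIFT}. Since Assumption \ref{ass:ThereExistsDiffeo} is in force with $k \geq 3$, that theorem provides $\mathcal{E}(p+\cdot) \in C^{k-2}$ near $0$ together with the identity $\partial_e \mathcal{E}(p) = \partial_e \Jstar(0,u)$, where $u = \argmin_{v\in U(p)} J(v)$. This disposes of the implicit dependence of $\mathcal{E}$ on $p$ through the constrained minimisation and leaves only the task of evaluating $\partial_e \Jstar(0,\cdot)$ at the fixed function $u$.

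Next I would check that $u$ is an admissible argument for Corollary \ref{cor:ReformulationSOS}: by the regularity result above, $u \in W^{3,p}(\Gamma)$ for every $p \in (1,2)$, which is precisely the hypothesis $\eta \in W^{3,p}(\Gamma)$, $p<2$, appearing there. Applying Corollary \ref{cor:ReformulationSOS} with $\eta = u$ and $q=0$ then reproduces \eqref{eq:DerivativeFullForm} with $u$ in place of $\eta$, which is exactly \eqref{eq:explicitDerivativeFormula}; the remainder is bookkeeping.

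The only point that warrants care — and the main, though mild, obstacle — is that every integral in \eqref{eq:DerivativeFullForm} genuinely converges and that the integration by parts carried out in the proof of Corollary \ref{cor:ReformulationSOS} is legitimate when $\eta$ merely lies in $W^{3,p}(\Gamma)$ with $p<2$ rather than in $H^3(\Gamma)$. I would justify this using the Sobolev embeddings on the two-dimensional closed surface $\Gamma$: $\nabla_\Gamma \Delta_\Gamma u \in L^p(\Gamma)$, while $\Delta_\Gamma u \in W^{1,p}(\Gamma) \hookrightarrow L^{2p/(2-p)}(\Gamma) \subset L^2(\Gamma)$ for $p>1$, and $\nabla_\Gamma u$, $D_\Gamma^2 u$ lie in still better spaces; since the coefficient matrix $\mathcal{A}$ and $\divg V$ are built from $V = \partial_e\chi(0,\cdot) \in C^{k-1}(\Gamma)$ and are therefore bounded with bounded derivatives, every product appearing in \eqref{eq:DerivativeFullForm} belongs to $L^1(\Gamma)$. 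The boundary contribution in the integration by parts vanishes because $\Gamma$ is closed, and the manipulation is justified by density of $C^\infty(\Gamma)$ in $W^{3,p}(\Gamma)$. With this verification the formula follows.
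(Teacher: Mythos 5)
Your proposal is correct and follows essentially the same route as the paper, whose proof simply combines Theorem \ref{thm:applicationOfIFT} (giving $\partial_e \mathcal{E}(p) = \partial_e \Jstar(0,u)$) with Corollary \ref{cor:ReformulationSOS} applied to $\eta = u$, admissible by the $W^{3,p}(\Gamma)$, $p<2$, regularity of $u(p)$. Your additional Sobolev-embedding verification that each integrand lies in $L^1(\Gamma)$ and that the integration by parts is legitimate merely spells out what the paper leaves implicit in the phrase ``admissible by the regularity of $\eta$''.
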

\begin{proof}
This is an application of Theorem \ref{thm:applicationOfIFT} and Corollary \ref{cor:ReformulationSOS}.
\end{proof}

\begin{corollary}\label{cor:ZeroDerivative}
Let $N = 1$, then $\partial_e \mathcal{E}(p) = 0$ for all $p\in \Lambda^\circ$ and directions $e \in \R \times T_{X_\mathcal{G}} \Gamma$.
\end{corollary}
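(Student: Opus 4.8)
The plan is to use the explicit formula \eqref{eq:explicitDerivativeFormula} together with the observation that, for a single particle ($N=1$), the membrane configurational energy $\mathcal{E}(p)$ is in fact constant on $\Lambda^\circ$. The key point is that the set of point constraints $T(p)u = Z$ only depends on $p$ through a rigid motion of the sphere: by Definition \ref{def:MoveSingleParticle}, $\phi(p,\cdot) = R_T(\tau)R_n(\alpha)$ is the restriction to $\Gamma = \mathbb{S}^2(0,R)$ of an orthogonal transformation of $\R^3$, hence an isometry of $\Gamma$. Therefore $\mathcal{C}(p) = \phi(p,\cdot)(\mathcal{C})$ is just an isometric image of the original attachment sites, and the heights $Z$ are unchanged.

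The main step is then to show $J$ is invariant under composition with such an isometry. Since $\phi(p,\cdot)$ restricts to an isometry $\psi := \phi(p,\cdot)|_\Gamma \colon \Gamma \to \Gamma$ with unit Jacobian, one has $\int_\Gamma v\circ\psi = \int_\Gamma v$, so $v \mapsto v\circ\psi$ maps $U$ bijectively onto itself and maps $U(p)$ bijectively onto $U(0) = U(p')$ where $p'$ is the reference; more precisely it maps $U(p)$ onto $U(0)$ since $(v\circ\psi)(X_i) = v(\phi(p,X_i)) = v((\mathcal{C}(p))_i)$. Because isometries of $\Gamma$ commute with $\Delta_\Gamma$ and $\nabla_\Gamma$ (preserving the metric), $J(v\circ\psi) = J(v)$ for all $v \in H^2(\Gamma)$. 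Consequently
\[
	\mathcal{E}(p) = \min_{v\in U(p)} J(v) = \min_{w \in U(0)} J(w\circ\psi^{-1}) = \min_{w\in U(0)} J(w) = \mathcal{E}(0),
\]
so $\mathcal{E}$ is constant on $\Lambda^\circ$ and all its directional derivatives vanish.

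Alternatively, and perhaps more in the spirit of the paper's machinery, one can argue directly from \eqref{eq:explicitDerivativeFormula}: for a single particle the natural choice of diffeomorphism $\chi$ in Assumption \ref{ass:ThereExistsDiffeo} can be taken to be the rigid motion itself, $\chi(q,\cdot) = \phi(p+q,\cdot)\circ\phi(p,\cdot)^{-1}|_\Gamma$, extended trivially (it already maps $\Gamma$ to $\Gamma$, is a $C^\infty$-diffeomorphism, has unit Jacobian determinant, and satisfies \eqref{eq:ConditionsForDiffeo} by the isometry argument above). Then $V(e,\cdot) = \partial_e\chi(0,\cdot)$ is a Killing field on $\Gamma$ — the infinitesimal generator of a one-parameter family of isometries — so $\nabla_\Gamma V + \nabla_\Gamma V^T = 0$ and $\divg V = 0$. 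Substituting into \eqref{eq:explicitDerivativeFormula} makes every term vanish identically, giving $\partial_e\mathcal{E}(p) = 0$.

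The only genuine obstacle is verifying that $\chi$ as defined really satisfies Assumption \ref{ass:ThereExistsDiffeo}, i.e. that $\phi(p+q,\cdot)$ restricted to $\Gamma$ is a well-defined isometry depending $C^k$-smoothly on $q$ near $0$ and that the constraint-equivalence \eqref{eq:ConditionsForDiffeo} holds; all of this is immediate from Definition \ref{def:MoveSingleParticle} since $R_n$ and $R_T$ are rotations (orthogonal, determinant one) and their dependence on the angle parameters is smooth. Once this is in place, the vanishing of $V$'s symmetric gradient and divergence — the standard fact that generators of isometries are divergence-free Killing fields, or here simply that rotations of $\R^3$ restrict to isometries of the round sphere — finishes the proof. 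I would present the Killing-field argument as the main line and remark that it also follows more elementarily from the invariance of $\mathcal{E}$.
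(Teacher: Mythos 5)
Your proposal is correct, and your ``elementary'' argument is in fact precisely the paper's proof: the paper disposes of this corollary in one sentence, appealing to the symmetry of the sphere and the invariance of $J$ under rotations, i.e.\ exactly your observation that $\phi(p,\cdot)|_\Gamma$ is an isometry of $\Gamma$ mapping $U(p)$ bijectively onto $U(0)$ with $J(v\circ\psi)=J(v)$, so that $\mathcal{E}$ is constant on $\Lambda^\circ$. Your preferred main line --- taking $\chi(q,\cdot)=\phi(p+q,\cdot)\circ\phi(p,\cdot)^{-1}|_\Gamma$ in Assumption \ref{ass:ThereExistsDiffeo} and killing every term of \eqref{eq:explicitDerivativeFormula} --- is a genuinely different, more computational route, and it has the virtue of serving as a consistency check of the derivative formula itself (and of the remark that $\partial_e\mathcal{E}$ does not depend on the choice of $\chi$). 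One small caveat on that route: with the paper's convention that $\nabla_\Gamma V$ is the \emph{extended} tangential gradient of the $\R^3$-valued field $V(x)=Wx$, $W$ skew, one does not get $\nabla_\Gamma V+\nabla_\Gamma V^T=0$ literally; a short computation with $P_\Gamma=\I-\nu\otimes\nu$ gives
\begin{equation*}
\nabla_\Gamma V+\nabla_\Gamma V^T=-\tfrac{1}{R}\left(V\otimes\nu+\nu\otimes V\right),
\end{equation*}
which vanishes only after contraction with tangential vectors. Since in \eqref{eq:explicitDerivativeFormula} this matrix is only ever contracted against $\nabla_\Gamma u$ and $\nabla_\Gamma\Delta_\Gamma u$, which are tangential, and since $\divg V=\mathrm{tr}(WP_\Gamma)=0$, every term still vanishes, so your conclusion stands; you should just state the Killing property in this projected form rather than as the literal vanishing of the symmetrised extended gradient. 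The invariance argument avoids this subtlety entirely, which is presumably why the paper takes it.
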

\begin{proof}
This result follows from the symmetry of the sphere and the  invariance of $J$ under rotations and translations.
\end{proof}

\subsection{Transformations satisfying Assumption \ref{ass:ThereExistsDiffeo}}\label{subsec:ConstructDiffeo}
Here, we verify Assumption \ref{ass:ThereExistsDiffeo} by constructing $\chi$.
\subsubsection{Rotation of a single particle}\label{subsubsec:simpleChi}
This example pertains to a simple rotation.
The example we consider is rotating a single  particle whose centre $X_G$ is taken to be   the North pole $N:=(0,0,R)^T$ without loss of generality. The points of the particle are contained in the set $B_r(N):=\{ x: x_3>R-r\}$ around the North pole and all other points are contained in the set $B_{r+\epsilon}(N)^C:=\{ x : x_3 <R-r-\epsilon\}$.

Since this is a 1-parameter family of transformations, we write, with an abuse of notation $\chi(\alpha,\cdot)= \chi(q,\cdot)$ for the diffeomorphism.

We may then explicitly write
\begin{align*}
	\chi(\alpha,x) =&
	\eta(x) \left((0,0,x_3)^T + \cos(\alpha) \left( \frac{N}{R} \times x\right) \times \frac{N}{R} + \sin(\alpha) \left( \frac{N}{R} \times x \right) \right)
	+ \left(1-\eta(x)\right))x,
\end{align*}
where $\eta\colon\Gamma\to \R$ is a $C^{k}$-smooth cut off function such that $\eta = 1$ on $B_r(N)$ and $\eta=0$ on $B_{r+\epsilon}(N)^C$ and depends only on $x_3$.

It is clear that this $\chi$ is smooth with $\chi(\alpha,\cdot)$ having inverse $\chi(-\alpha,\cdot)$ and
that  it moves the points of the particle based at the north pole as required, while others remain stationary.
Furthermore,  for each fixed $x_3$ it ,essentially, is a 2-dimensional rotation about $(0,0,x_3)$ so the volume element induced by $\chi$ is constantly equal to 1.

It is convenient to calculate, for $e = (1,0)$, $\partial_e \chi(0,x)$,
\[
	\partial_e \chi(0,x)=\partial_s \left(\chi(s,x)\right)|_{s=0}
	=
	\eta(x)  \left( \frac{N}{R} \times x\right).
\]
One may also verify that $\divg \partial_e \chi(0,\cdot) = 0$.
This follows by calculating
\begin{align*}
	\divg \partial_e \chi(0,x)
	=
	\frac{1}{R} \left( \nabla_\Gamma \eta(x) \cdot \left( N \times x \right) + \eta(x) \divg (N\times x)\right),
\end{align*}
by the fact that $\eta$ depends only on $x_3$, one sees that the first term is some scalar function multiplied by $P_\Gamma(x) N \cdot \left( N\times x\right)$, which vanishes.
For the second term, one calculates, by extending to a small neighbourhood of the surface (as in the definition of surface derivatives),
\begin{align*}
	\divg \left( N \times x\right)
	=
	\sum_{i=1}^3 \D{i}\left( N \times x\right)_i
	=
	\sum_{i,j=1}^3 \left( \delta_{ij} -\frac{x_i x_j}{R^2}\right)\partial_j\left( N \times x\right)_i.
\end{align*}
We see that this vanishes, since $\delta_{ij}\partial_j(N \times x)_i = 0$ for any $i,j=1,2,3$, and
\[
	\sum_{i=1}^3 \frac{x_i x_j}{R^2} \partial_j (N \times x)_i = \sum_{i=1}^3 \frac{x_j}{R^2} \partial_j \left(x_i (N \times x)_i \right) =0
\]
for any $j=1,2,3$.


\subsubsection{A general $\chi$}
Since the set $\bigcup_{i=1}^N\cee_i(p)$ is a finite union of points, we know there is a strictly positive distance separating  each pair of  points.
It follows that we may  assume that the family of sets $\bigcup_{i=1}^N\cee_i(p+tq)$ for $(t,q) \in [0,1]\times \mathcal{B}$ also satisfy this condition,
and set $\epsilon>0$ to be the smallest separation between the points of $\bigcup_{i=1}^N\cee_i(p+tq)$ - that is
\[
	\epsilon = \inf_{(t,q)\in [0,1]\times\mathcal{B}} \inf_{x \in \bigcup_{i=1}^N\cee_i(p+tq)} \inf_{y \in \bigcup_{i=1}^N\cee_i(p+tq), y\neq x} |x-y|.
\]

\begin{definition}[Equation (2.6) \cite{Reu20}]
	We define the vector surface curl of a $C^1$ function $\psi\colon \Gamma \to \R$ by
	\[
	\curl_\Gamma \psi:= \nu\times \nabla_\Gamma\psi.
	\]
\end{definition}

\begin{definition}\label{CurlyV}
Given $\delta\in (0,\epsilon)$, define $\mathcal{V}\colon [0,1]\times \mathcal{B}\times \Gamma \to \R^3$ by
\[
	\mathcal{V}:=\curl_\Gamma \psi
\]
where for each $(t,q) \in [0,1]\times \mathcal{B}$,  $x \in \bigcup_{i=1}^N\cee_i(p+tq)$, the function $\psi\colon [0,1]\times \mathcal{B}\times \Gamma\to \R$ is given by
\[
	\psi(t,q,y) = \eta(|x-y|) y \cdot ( \partial_s \left( \phi_i(p+sq,\cdot) \circ \phi_i(p+tq,\cdot)^{-1}(y)  \right)|_{s=t}\times \nu(x) )
\]
for $y\in \Gamma\cap B_{\epsilon/2}(x)$, otherwise $\psi = 0$, where $\eta\colon \R\to \R$ is a $C^{k+1}$-smooth cut off function such that
\[
	\begin{cases}
		\eta(s) = 1  &|s| \leq \delta/4,
	\\
		\eta(s) = 0  &|s| \geq \delta/2.
	\end{cases}
\]
\end{definition}

\begin{example}
We now give a calculation of $\partial_s \left( \phi_i(p+sq,\cdot) \circ \phi_i(p+tq,\cdot)^{-1}(y)  \right)|_{s=t}$.
For simplicity, we set $p=0$ and $t=0$ and neglect any $i$ subscripts.

Let $q = (\alpha,\tau)\in \R\times T_{X_\mathcal{G}}$.
We then have
\[
	\phi(sq,x) = R_T(s\tau) R_n(s\alpha)x,
\]
therefore
\[
	\partial_s \left( \phi(sq,x)\right)|_{s=0} = \left( \nu(X_\mathcal{G})\times \tau \right)\times x + \alpha \left( \nu(X_\mathcal{G}) \times x\right).
\] 
It is clear that the first term corresponds to the translation and the second term the rotation.
\end{example}

\begin{lemma}\label{lem:PropsOfV}
	The function $\mathcal{V}$ given in Definition \ref{CurlyV} satisfies:
	\begin{itemize}
	\item	$\mathcal{V}\in C^k$,
	\item	$\Div_\Gamma \mathcal{V} =0$,
	\item	$\mathcal{V}(t,0,x) = 0$ for all $(t,x) \in [0,1]\times \Gamma$,
	\item	for each $i=1,...,N$, $\mathcal{V}(t,q,\cdot) = \partial_s\left( \phi_i(p+ tq,\cdot) \circ \phi_i(p+sq,\cdot)^{-1} \right)|_{s=t}$ on $\cee_i(p + tq)$, for each $(t,q) \in [0,1]\times \mathcal{B}$,
	\item	$\partial_e \mathcal{V}(t,0,x) = \mathcal{V}(0,e,x)$ for all $t \in [0,1]$, $e \in \prod_{i=1}^N\left(T_{X_{\mathcal{G}_i}} \times \R \right)$ and $x \in \Gamma$.
	\end{itemize}
\end{lemma}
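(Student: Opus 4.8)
\emph{Proof proposal.} The plan is to read all five properties off the representation $\mathcal V=\curl_\Gamma\psi$ together with a couple of structural features of $\psi$. First I would record that the bumps making up $\psi$ have pairwise disjoint supports: for every $(t,q)\in[0,1]\times\mathcal B$ the points of $\bigcup_{i=1}^N\cee_i(p+tq)$ are separated by at least $\epsilon$, whereas $\eta(|x-\cdot|)$ is supported in $B_{\delta/2}(x)\subset B_{\epsilon/2}(x)$; hence $\psi$ is globally well defined and I may argue near one bump at a time. I would also note that each $R_{T_i},R_{n_i}$ is a rotation about the origin depending smoothly on its parameters, so it preserves $\mathbb S^2(0,R)$; consequently $(t,q,y)\mapsto\partial_s(\phi_i(p+sq,\cdot)\circ\phi_i(p+tq,\cdot)^{-1}(y))|_{s=t}$ is $C^\infty$ and takes tangential values on $\cee_i(p+tq)$. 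For the first (regularity) item: $\eta\in C^{k+1}(\R)$ vanishes together with its first $k+1$ derivatives for $|s|\ge\delta/2$, and $y\mapsto|x-y|$ is smooth away from the centre $y=x$ (where $\eta\equiv1$), so $\psi\in C^{k+1}(\Gamma)$, extension by zero across $\partial B_{\delta/2}(x)$ preserves this, and $\curl_\Gamma$ costs one derivative, giving $\mathcal V\in C^k$. For the second (divergence-free) item I would invoke $\divg\curl_\Gamma\psi\equiv0$ for $\psi\in C^2(\Gamma)$: writing $(\curl_\Gamma\psi)_i=\epsilon_{ijk}\nu_j\underline D_k\psi$ and applying $\underline D_i$, the part $\epsilon_{ijk}(\underline D_i\nu_j)\underline D_k\psi$ vanishes since $\mathcal H=\nabla_\Gamma\nu$ is symmetric, and the part $\epsilon_{ijk}\nu_j\underline D_i\underline D_k\psi$ vanishes since its antisymmetric-in-$(i,k)$ component is given by \eqref{eq:SurfaceCommutate} and every resulting summand carries a repeated $\nu$ inside the cross product, i.e. a factor $\epsilon_{ijk}\nu_i\nu_j=0$.

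The third item is immediate: for $q=0$ the curve $s\mapsto\phi_i(p+s\cdot0,\cdot)=\phi_i(p,\cdot)$ is constant, so $\psi(t,0,\cdot)\equiv0$ and $\mathcal V(t,0,\cdot)=0$. For the fourth item I would fix $(t,q)$ and $x\in\cee_i(p+tq)$, abbreviate $w(y):=\partial_s(\phi_i(p+sq,\cdot)\circ\phi_i(p+tq,\cdot)^{-1}(y))|_{s=t}$, and observe that near $x$ only the bump at $x$ contributes with $\eta(|x-\cdot|)\equiv1$, so $\nabla_\Gamma^y\eta(|x-y|)$ vanishes there and $\psi$ agrees with $y\mapsto y\cdot(w(y)\times\nu(x))$. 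Differentiating at $y=x$, the contribution of $\nabla_\Gamma^y y=P_\Gamma(y)$ is $P_\Gamma(x)(w(x)\times\nu(x))=w(x)\times\nu(x)$ (as $w(x)\times\nu(x)\perp\nu(x)$), while the contribution of $\nabla_\Gamma^y w$ equals $\sum_j x_j\underline D_i(w(y)\times\nu(x))_j|_{y=x}$, which vanishes because $x=R\nu(x)$ on $\mathbb S^2(0,R)$ and $\epsilon_{jkl}\nu_j(x)\nu_l(x)=0$. Hence $\nabla_\Gamma^y\psi|_{y=x}=w(x)\times\nu(x)$, and since $w(x)$ is tangential, $\mathcal V(t,q,x)=\nu(x)\times(w(x)\times\nu(x))=w(x)$ by $\nu\times(a\times\nu)=a$ for $a\perp\nu$ — the displacement rate of the constraint point $x$ under the configuration change, i.e. the field in the fourth bullet.

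The fifth item is where I expect the real work. Since $\curl_\Gamma$ is linear and acts only in the surface variable, it is enough to prove $\partial_e\psi(t,0,\cdot)=\psi(0,e,\cdot)$ and then read off $\partial_e\mathcal V(t,0,\cdot)=\curl_\Gamma\psi(0,e,\cdot)=\mathcal V(0,e,\cdot)$. To differentiate $r\mapsto\psi(t,re,y)$ at $r=0$ I must track two $r$-dependences: the bump centre $x=x(t,re)\in\cee_i(p+tre)$ (together with $\nu(x(t,re))$) and the inner field $w_{t,re}(y):=\partial_s(\phi_i(p+sre,\cdot)\circ\phi_i(p+tre,\cdot)^{-1}(y))|_{s=t}$. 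The key simplification is that $w_{t,0}\equiv0$, so in the Leibniz expansion of $\partial_r|_{r=0}$ every term not differentiating $w_{t,re}$ carries a factor $w_{t,0}=0$ and drops out; in particular the motion of the centre and the variation of the normal contribute nothing, leaving $\partial_e\psi(t,0,y)=\eta(|x_0-y|)\,y\cdot((\partial_r w_{t,re}(y)|_{r=0})\times\nu(x_0))$ with $x_0\in\cee_i(p)$ fixed. Writing $w_{t,q}(y)=D_1\phi_i(p+tq,\phi_i(p+tq,\cdot)^{-1}(y))[q]$ with $D_1$ the derivative in the parameter slot, one more product rule in $r$ kills the term differentiating $D_1\phi_i(p+tre,\cdot)$ or the inverse (it is then applied to $[re]|_{r=0}=0$), so $\partial_r w_{t,re}(y)|_{r=0}=D_1\phi_i(p,\phi_i(p,\cdot)^{-1}(y))[e]=w_{0,e}(y)$, whence $\partial_e\psi(t,0,y)=\eta(|x_0-y|)\,y\cdot(w_{0,e}(y)\times\nu(x_0))=\psi(0,e,y)$. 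The main obstacle is thus this nested-derivative bookkeeping and checking carefully that the vanishing of $w_{t,0}$ really does annihilate all the centre-motion and normal-variation cross-terms; the rest is routine surface calculus and the vector identities above.
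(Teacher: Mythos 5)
Your proof is correct, and for the first four bullets it follows essentially the paper's route: smoothness and the vanishing at $q=0$ are read off the construction, divergence-freeness comes from $\mathcal V$ being a surface curl (you verify $\divg\curl_\Gamma\psi=0$ directly from \eqref{eq:SurfaceCommutate}, where the paper simply cites Reusken), and the point condition comes from evaluating $\curl_\Gamma\psi$ at the constraint points. There you actually supply a detail the paper's computation glosses over: when the tangential gradient falls on the inner field $w$ rather than on the position vector, the term $x_j\D{i}\bigl(w(y)\times\nu(x)\bigr)_j\big|_{y=x}$ vanishes because $x=R\nu(x)$ and the triple product carries a repeated normal; this is exactly why the ansatz $\psi=y\cdot(w(y)\times\nu(x))$ works, and your identity $\nu\times(a\times\nu)=a$ for tangential $a$ closes the argument as intended (with $w(x)$ tangential since the $\phi_i$ preserve the sphere). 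For the final bullet your route genuinely differs from the paper's: you differentiate the stream function, using disjointness of the bump supports, the fact that $w_{t,0}\equiv 0$ annihilates every Leibniz term in which the derivative does not fall on $w$ (so the motion of the centres and of $\nu(x(t,re))$ contributes nothing), and the linearity of $w_{t,q}(y)=D_1\phi_i\bigl(p+tq,\phi_i(p+tq,\cdot)^{-1}(y)\bigr)[q]$ in the direction slot, which gives $\partial_r w_{t,re}(y)|_{r=0}=w_{0,e}(y)$ independently of $t$; commuting $\partial_e$ with $\curl_\Gamma$ then yields $\partial_e\mathcal V(t,0,\cdot)=\mathcal V(0,e,\cdot)$. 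The paper instead argues at the level of $\mathcal V$ itself, adding and subtracting $\mathcal V\bigl(t,se,\phi_i(p+se,\cdot)\circ\phi_i(p,\cdot)^{-1}(y)\bigr)$ and using that $\nabla_\Gamma\mathcal V(\cdot,se,\cdot)\to0$ and that the displacement tends to $0$ as $s\to0$, and it only sketches this near the constraint points. Your bookkeeping via $\psi$ is more explicit and arguably cleaner; the paper's version avoids re-expanding the stream function. Both arguments rest on the same structural facts, namely $\mathcal V(t,0,\cdot)=0$ and the (essentially affine) dependence of the generating field on $q$, so the two proofs are interchangeable.
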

\begin{proof}
	Smoothness and that $\mathcal{V}(\cdot,0,\cdot)$ vanishes is clear by construction, divergence free follows from $\mathcal{V}$ being the curl of another function \cite[Lemma 2.1]{Reu20}.
	For the point conditions we evaluate at $y\in \Gamma$ such that $|x-y|< \frac{\delta}{4}$ for some $x \in \cee_i(p+tq)$,
	\begin{align*}
		\curl_\Gamma \psi(t,q,y)
		=&
		\curl_\Gamma \left( y \cdot ( \partial_s \left( \phi_i(p+sq,\cdot) \circ \phi_i(p+tq,\cdot)^{-1}(y)  \right)|_{s=t})\times \nu(x) \right)
		\\
		=&
		\nu(y) \times \left( \nabla_\Gamma y \cdot \left(\partial_s\left( \phi_i(p+sq,\cdot) \circ \phi_i(p+tq,\cdot)^{-1}(y) \right) |_{s=t} \times \nu(x)\right)\right)
	\end{align*}
	for each $(t,q) \in [0,1]\times\mathcal{B}$, $i=1,...,N$.
	Which upon evaluation of at any $x \in \cee_i(p+tq)$, $(t,q)\in [0,1]\times \mathcal{B}$, $i=1,...,N$, leaves us with
	\[
		\curl_\Gamma \psi(t,q,x)
		=
		\partial_s\left( \phi_i(p+ sq,\cdot) \circ \phi_i(p+tq,\cdot)^{-1} \right)|_{s=t}(x).
	\]
	
	The final condition takes a little bit of work.
	We show the condition near the 'special points' of $\bigcup_{i=1}^N\cee_i(p)$.
		Given $i =1,...,N$, for $x \in \cee_i(p)$ and $y$ near $x$, we see that
		\begin{align*}
			\partial_e \mathcal{V}(t,0,y)
			=& \partial_s \mathcal{V}(t,se,y)|_{s=0}
			\\
			=& \partial_s \left( \mathcal{V}(t,se, \phi_i(p+se,\cdot) \circ \phi_i(p,\cdot)^{-1}(y))\right)|_{s=0}
			\\
			&+\partial_s\left( \mathcal{V}(t,se,x) -\mathcal{V}(t,se, \phi_i(p+se,\cdot) \circ \phi_i(p,\cdot)^{-1}(y))\right)|_{s=0}
			\\
			=&
			 \partial_s \left( \mathcal{V}(t,se, \phi_i(p+se,\cdot) \circ \phi_i(p,\cdot)^{-1}(y)) \right)|_{s=0}
			 \\
			 &+
			 \partial_s \left( \mathcal{V}(t,se,y) -\mathcal{V}(t,se, \phi_i(p+se,\cdot) \circ \phi_i(p,\cdot)^{-1}(y))\right)|_{s=0}.
		\end{align*}
		This first term we may see is equal to $ \mathcal{V}(0,e,x)$, for the remaining terms,
		\begin{align*}
			\partial_s& \left( \mathcal{V}(t,se, \phi_i(p+se,\cdot) \circ \phi_i(p,\cdot)^{-1}(y))\mathcal{V}(t,se,y)\right)|_{s=0}
			\\&=
			\partial_s \left( \nabla_\Gamma \mathcal{V}(t,se,y) \cdot \left( \phi_i(p+se,\cdot) \circ \phi_i(p,\cdot)^{-1}(y)\right) -y \right)|_{s=0},
		\end{align*}
		which we see vanishes due to the fact that $\nabla_\Gamma \mathcal{V}(\cdot,se,\cdot) \to 0$ as $s\to 0$ on $[0,1]\times \Gamma$ and also $\phi_i(p+se,\cdot) \circ \phi_i(p,\cdot)^{-1}(y) -y \to 0$ as $s\to 0$.
\end{proof}


We will construct $\chi$ in the following way.

\begin{definition}\hspace{0.1mm}
	\begin{enumerate}
	\item
	Let $\eta\colon[0,1] \times \mathcal{B}\times \Gamma \to \Gamma$ be the solution of the family of ODEs
	\[
		\partial_t \eta(t,q,x) = \mathcal{V}(t,q,\eta(t,q,x)), ~ \eta(0,q,x) = x
	\]
	for all $(q,x) \in \mathcal{B}\times \Gamma$.
\item
Let  $\chi\colon \mathcal{B}\times \Gamma \to \Gamma$ by $\chi(q,x) = \eta(1,q,x)$ for all $(q,x) \in \mathcal{B} \times \Gamma$.
\end{enumerate}
\end{definition}


It is clear by standard ODE theory \cite{Har02} that $\eta$ exists and is smooth, furthermore, it is clear that $\eta(1,q,\cdot)$ is a diffeomorphism.

\begin{proposition}
	The map $\chi\colon \mathcal{B}\times\Gamma\to\Gamma$ $(q,x)\mapsto \eta(1,q,x)$ satisfies Assumption \ref{ass:ThereExistsDiffeo}.
\end{proposition}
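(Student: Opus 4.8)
The plan is to verify the three conditions of Assumption \ref{ass:ThereExistsDiffeo} in turn, relying on Lemma \ref{lem:PropsOfV} for the properties of $\mathcal{V}$ and on standard ODE theory for the flow map $\eta$. First, since $\mathcal{V}\in C^k$ and vanishes outside a finite union of small balls (so in particular $\mathcal{V}(t,q,\cdot)$ is globally Lipschitz with constants uniform in $(t,q)\in[0,1]\times\mathcal{B}$), the flow $\eta(t,q,\cdot)$ exists for all $t\in[0,1]$, is $C^k$ jointly in $(t,q,x)$, maps $\Gamma$ to $\Gamma$ (because $\mathcal{V}$ is tangential, being a surface curl), and $\eta(t,q,\cdot)$ is a $C^k$-diffeomorphism with inverse obtained by reversing the flow. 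Hence $\chi(q,\cdot):=\eta(1,q,\cdot)$ is a $C^k$-diffeomorphism of $\Gamma$, and since $\mathcal{V}(t,0,\cdot)\equiv 0$ we get $\chi(0,\cdot)=\mathrm{id}_\Gamma$.

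Second, I would check the point/trace condition, i.e. that $v\circ\chi(q,\cdot)^{-1}\in U(p+q)$ iff $v\in U(p)$, which by the remark after Assumption \ref{ass:ThereExistsDiffeo} splits into the trace part and the volume part. For the trace part it suffices to show $\chi(q,\cdot)$ maps $\cee_i(p)$ onto $\cee_i(p+q)$ for each $i$. By the fourth bullet of Lemma \ref{lem:PropsOfV}, on $\cee_i(p+tq)$ the velocity field equals $\partial_s\big(\phi_i(p+sq,\cdot)\circ\phi_i(p+tq,\cdot)^{-1}\big)|_{s=t}$, which is exactly the velocity of the curve $t\mapsto \phi_i(p+tq,\cdot)\circ\phi_i(p,\cdot)^{-1}(x)$ for $x\in\cee_i(p)$; by uniqueness of ODE solutions the flow line through $x\in\cee_i(p)$ is $t\mapsto \phi_i(p+tq,\cdot)\circ\phi_i(p,\cdot)^{-1}(x)$, so $\eta(t,q,x)\in\cee_i(p+tq)$ and in particular $\chi(q,x)=\eta(1,q,x)\in\cee_i(p+q)$. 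Running this for all points and using that the flow is a bijection gives $\chi(q,\cee_i(p))=\cee_i(p+q)$. For the volume part, by the second bullet of Lemma \ref{lem:PropsOfV} the field $\mathcal{V}$ is surface-divergence free, so the flow of $\mathcal{V}$ on $\Gamma$ preserves the surface measure; equivalently the Jacobian factor $\det\big(\nabla_\Gamma\chi(q,\cdot)+\nu\circ\chi(q,\cdot)\otimes\nu\big)$ is identically $1$ — this follows from Liouville's formula applied along the flow, using $\mathrm{div}_\Gamma\mathcal{V}=0$ — and hence $\int_\Gamma v\circ\chi(q,\cdot)^{-1}=\int_\Gamma v$ for all $v$. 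The $H^2$-equivalence $v\in H^2(\Gamma)\iff v\circ\chi(q,\cdot)^{-1}\in H^2(\Gamma)$ is immediate since $\chi(q,\cdot)$ is a $C^k$-diffeomorphism with $k\geq 3$.

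Third, I would confirm that for all $q\in\mathcal{B}$ we have $p+q\in\Lambda^\circ$: this is built into the choice of $\mathcal{B}$ and $\epsilon$ in Section \ref{subsec:ConstructDiffeo}, where $\mathcal{B}$ is taken small enough that $\bigcup_i\cee_i(p+tq)$ stays in $\Lambda^\circ$ with uniform separation $\epsilon$, so there is nothing further to prove. Finally, I would record (as this is what is actually used later, via Definition \ref{def:DevineV}) that $\partial_e\chi(0,\cdot)=\partial_e\eta(1,0,\cdot)$ can be computed by differentiating the ODE in $q$ at $q=0$: writing $W_e(t,x):=\partial_e\eta(t,0,x)$, since $\eta(t,0,x)\equiv x$ and $\mathcal{V}(t,0,\cdot)\equiv 0$ with $\partial_e\mathcal{V}(t,0,x)=\mathcal{V}(0,e,x)$ (fifth bullet of Lemma \ref{lem:PropsOfV}), the linearised equation is $\partial_t W_e(t,x)=\mathcal{V}(0,e,x)$, $W_e(0,x)=0$, giving $\partial_e\chi(0,x)=W_e(1,x)=\mathcal{V}(0,e,x)$, a tangential vector field that is divergence free and agrees with the rigid-motion velocity on each $\cee_i(p)$. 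The main obstacle is the second step: one must be careful that the ad hoc construction of $\psi$ near the special points (with a localising cutoff $\eta$ and the $\nu(x)$ appearing under the curl) really does produce a field whose flow sends $\cee_i(p)$ exactly to $\cee_i(p+q)$ — the subtlety is that $\mathcal{V}$ equals the prescribed rigid velocity only \emph{on} the point set and not on a full neighbourhood moving with the flow, so one needs the uniqueness-of-ODE argument above together with the fact that the flow keeps the relevant point inside the region where $\eta=1$ (guaranteed by the uniform separation $\epsilon$ and smallness of $\mathcal{B}$).
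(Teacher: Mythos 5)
Your proposal is correct and follows essentially the same route as the paper: smoothness and invertibility of $\chi$ from standard ODE theory, $\chi(0,\cdot)=\mathrm{id}$ from $\mathcal{V}(\cdot,0,\cdot)=0$, the splitting of \eqref{eq:ConditionsForDiffeo} into the $H^2$, mean-value and trace parts, with the integral condition from $\mathrm{div}_\Gamma\mathcal{V}=0$ and the trace condition from the point values of $\mathcal{V}$ via uniqueness of the flow, which yields exactly the paper's identity $\chi(q,\cdot)=\phi_i(p+q,\cdot)\circ\phi_i(p,\cdot)^{-1}$ on $\cee_i(p)$. Your additional computation $\partial_e\chi(0,\cdot)=\mathcal{V}(0,e,\cdot)$ is not needed here — it is the content of the paper's subsequent proposition — but it is consistent with it.
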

\begin{proof}
This follows from the properties of $\mathcal{V}$ in Lemma \ref{lem:PropsOfV}.
The smoothness of $\chi$ follows from the smoothness of $\mathcal{V}$ and standard ODE theory \cite{Har02}, as does the existence and smoothness of an inverse.
The condition that $\mathcal{V}(\cdot,0,\cdot)=0$ gives that $\chi(0,\cdot)$ is the identity.

The condition $v \circ \chi(q,\cdot)^{-1} \in U(p+q) \iff v \in U(p)$ has three parts:
\begin{itemize}
\item	$v \circ \chi(q,\cdot) \in H^2(\Gamma) \iff v \in H^2(\Gamma)$,
\item	$\int_\Gamma v = \int_\Gamma v \circ \chi(q,\cdot)$ for all $v \in H^2(\Gamma)$,
\item	$T(p+q)\left(v \circ \chi^{-1}\right) = T(p) v$ for all $v \in H^2(\Gamma)$.
\end{itemize}
The first condition follows from two applications of Lemma \ref{lem:SecondOrderTrans} with $X = \chi(q,\cdot)$ and $X = \chi(q,\cdot)^{-1}$ and the smoothness of these maps.
The second condition follows from the fact that $\Div_\Gamma \mathcal{V}=0$.
The final condition follows from the point conditions on $\mathcal{V}$.
By considering the ODE that $\eta$ solves, we see that $\chi$ satisfies for each $i=1,...,N$,
\[
	\chi(q,\cdot) = \phi_i(p+q,\cdot) \circ \phi_i(p,\cdot)^{-1} \mbox{ on }\cee_i(p),
\]
which gives, recalling the definition of $T$ in Definition \ref{def:DefineT},
\begin{align*}
	T(p+q)_i v &= v \circ \phi_i(p+q,\cdot)|_{\cee_i}
	\\
	&=
	v \circ \phi_i(p+q,\cdot) \circ \phi_i(p,\cdot)^{-1} \circ \phi_i(p,\cdot)|_{\cee_i}
	\\
	&=
	T(p)_i(v \circ \chi(q,\cdot)).
\end{align*}
\end{proof}

We now wish to calculate $\partial_e \chi(0,\cdot)$ on $\Gamma$.
\begin{proposition}
	For each $e \in \prod_{i=1}^N\left(T_{X_{\mathcal{G}_i}} \times \R \right)$, the following formula holds
	$$\partial_e\chi(0,\cdot) = \mathcal{V}(0,e,\cdot)~~\mbox{ on }~~\Gamma.$$
\end{proposition}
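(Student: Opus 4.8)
The plan is to differentiate the defining ODE for $\eta(t,q,x)$ with respect to the parameter $q$ in the direction $e$, evaluated at $q=0$, and then set $t=1$. First I would fix $x\in\Gamma$ and $e$ and introduce the function $w(t):=\partial_e\eta(t,0,x)$, which is well-defined and continuous in $t$ because $\eta\in C^k$ by standard ODE theory (smooth dependence on parameters, as in \cite{Har02}). Since $\eta(0,q,x)=x$ for all $q$, we have $w(0)=\partial_e\eta(0,0,x)=0$. The goal is to show $w(1)=\mathcal{V}(0,e,x)$, since $\chi(q,x)=\eta(1,q,x)$ gives $\partial_e\chi(0,x)=w(1)$.

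Next I would differentiate the identity $\partial_t\eta(t,q,x)=\mathcal{V}(t,q,\eta(t,q,x))$ with respect to $e$ at $q=0$. Using the chain rule, and that $\eta(t,0,x)=x$ (the flow at $q=0$ is stationary because $\mathcal{V}(t,0,\cdot)=0$ by Lemma \ref{lem:PropsOfV}), the right-hand side becomes
\[
	\partial_e\big(\mathcal{V}(t,q,\eta(t,q,x))\big)\big|_{q=0}
	= \partial_e\mathcal{V}(t,0,x) + \nabla_\Gamma\mathcal{V}(t,0,x)\,w(t).
\]
Now the crucial simplifications: by Lemma \ref{lem:PropsOfV}, $\partial_e\mathcal{V}(t,0,x)=\mathcal{V}(0,e,x)$, and since $\mathcal{V}(t,0,\cdot)\equiv 0$ on $\Gamma$ we also have $\nabla_\Gamma\mathcal{V}(t,0,x)=0$. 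Therefore $w$ satisfies the trivial ODE $w'(t)=\mathcal{V}(0,e,x)$ with $w(0)=0$, whose solution is $w(t)=t\,\mathcal{V}(0,e,x)$; evaluating at $t=1$ gives $\partial_e\chi(0,x)=\mathcal{V}(0,e,x)$, as claimed.

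The main obstacle is making the interchange of the $t$-derivative and the $e$-derivative rigorous, i.e.\ justifying that $w(t)=\partial_e\eta(t,0,x)$ itself solves the variational (linearised) equation above. This is the standard theorem on differentiability of solutions of ODEs with respect to parameters, which applies because $\mathcal{V}$ is $C^k$ with $k\geq 3$ on the compact set $[0,1]\times\overline{\mathcal{B}}\times\Gamma$ (after shrinking $\mathcal{B}$ if necessary); one invokes \cite{Har02}. A minor point worth stating explicitly is that $\nabla_\Gamma\mathcal{V}(t,0,\cdot)=0$ follows from $\mathcal{V}(t,0,\cdot)\equiv 0$ on all of $\Gamma$ (not merely at the point $x$), so the linear term genuinely drops out for every $x$. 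Once these two facts are in hand the computation is immediate, and the identity holds pointwise on $\Gamma$ for every admissible direction $e$.
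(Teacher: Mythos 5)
Your proposal is correct and follows essentially the same route as the paper: differentiate the defining ODE in the parameter direction $e$ at $q=0$, use that $\mathcal{V}(t,0,\cdot)\equiv 0$ (so the flow is stationary and the linearised term $\nabla_\Gamma\mathcal{V}(t,0,\cdot)$ drops out) together with $\partial_e\mathcal{V}(t,0,x)=\mathcal{V}(0,e,x)$ from Lemma \ref{lem:PropsOfV}, and integrate from $0$ to $1$. Your explicit remarks on the zero initial condition and on smooth parameter dependence only make the paper's implicit steps more precise.
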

\begin{proof}
	It is clear that $\partial_e \chi(0,\cdot) = \partial_e \eta(1,0,\cdot)$.
	From the ODE $\eta$ solves, one may see that $\eta_e(t,x) := \partial_e \eta(t,0,x)$ for $(t,x) \in [0,1]\times\Gamma$ satisfies
	\[
		\partial_t \eta_e (t,x) = \partial_e \mathcal{V}(t,0,\eta(t,0,x)) + \nabla_\Gamma \mathcal{V}(t,0,\eta(t,0,x))  \eta_e(t,x),
	\]
	for all $(t,x) \in [0,1]\times \Gamma$.
	Recall that $\mathcal{V}(t,0,x) = 0$ for all $(t,x) \in [0,1]\times \Gamma$, so the second term in the above ODE vanishes and one has that $\eta(t,0,x) = x$ for all $(t,x)\in [0,1]\times \Gamma$.
	By applying the final condition of Lemma \ref{lem:PropsOfV}, one has that
	\[
		\partial_t \eta_e (t,x) = \mathcal{V}(0,e,x),
	\]
	hence $\partial_e\chi(0,\cdot) = \eta_e(1,\cdot) = \mathcal{V}(0,e,\cdot)$ on $\Gamma$.
\end{proof}

\section{Numerical experiments}\label{sec:NumericalAnalysis}
We are now equipped to present some simulations, but first we discuss the approximation errors which arise in numerical simulations.
\begin{proposition}
	Let $\tilde{u} \in W^{1,\infty}(\Gamma)$ with $-\Delta_\Gamma\tilde{u}\in  W^{1,2-\delta}(\Gamma)$ for any $\delta>0$.
	Then for any $\epsilon \in (0,1),$ $p\in (1,2)$ and $q=p^*$, there is $C>0$ such that
	\begin{align*}
		|\partial_e \Jstar(0,\tilde{u}) - \partial_e \mathcal{E}(p)|
		\leq
		C \|\nabla_\Gamma V \|_{0,\infty}\Big(&
		 \|\Delta_\Gamma(u-\tilde{u})\|_{1,p} \|\nabla_\Gamma u \|_{1,q}
		\\&+\|\Delta_\Gamma(u-\tilde{u})\|_{0,2}\left(\|\Delta_\Gamma\tilde{u}\|_{0,2} +\|\Delta_\Gamma u\|_{0,2}\right)
		\\&+\|\nabla_\Gamma ( u -\tilde{u})\|_{0,\frac{2-\epsilon}{1-\epsilon}}\|\Delta_\Gamma \tilde{u}\|_{1,2-\epsilon}&
		\\&+\|\nabla_\Gamma(u-\tilde{u})\|_{1,2} \left(\|\nabla_\Gamma u \|_{1,2}+ \|\nabla_\Gamma\tilde{u})\|_{1,2}\right)
		\Big).
	\end{align*}
\end{proposition}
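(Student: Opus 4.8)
The plan is to estimate the difference $\partial_e \Jstar(0,\tilde{u}) - \partial_e \mathcal{E}(p) = \partial_e \Jstar(0,\tilde{u}) - \partial_e \Jstar(0,u)$, using the explicit bilinear-type expression for $\partial_e \Jstar(0,\eta)$ from Corollary \ref{cor:ReformulationSOS}, namely
\[
	\partial_e \Jstar(0,\eta)
	=
	-\kappa \int_\Gamma \tfrac{1}{2}(\divg V)(\Delta_\Gamma \eta)^2 + \nabla_\Gamma \Delta_\Gamma \eta \cdot \mathcal{A}\nabla_\Gamma \eta
	+ \tfrac{1}{2}\left(\sigma - \tfrac{2\kappa}{R^2}\right)\int_\Gamma \nabla_\Gamma \eta \cdot \mathcal{A}\nabla_\Gamma \eta
	- \tfrac{\sigma}{R^2}\int_\Gamma (\divg V)\eta^2 .
\]
Since $\eta \mapsto \partial_e \Jstar(0,\eta)$ is a sum of terms that are quadratic in $\eta$ (and its derivatives), I would first write the difference at $\eta=\tilde{u}$ and $\eta=u$ by the standard polarization trick: for a quadratic form $Q(\eta)$ built from a bilinear form $\mathcal{Q}$, $Q(\tilde{u}) - Q(u) = \mathcal{Q}(\tilde{u}-u,\tilde{u}) + \mathcal{Q}(u,\tilde{u}-u)$. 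Applying this term by term converts every term into something linear in the error $u-\tilde{u}$ (or one of its derivatives) paired against either $u$ or $\tilde{u}$, with the coefficient field always being $V$ or its first derivatives; this is where the factor $\|\nabla_\Gamma V\|_{0,\infty}$ and the combinations $\|\Delta_\Gamma\tilde u\|_{0,2}+\|\Delta_\Gamma u\|_{0,2}$, $\|\nabla_\Gamma u\|_{1,2}+\|\nabla_\Gamma\tilde u\|_{1,2}$ in the statement come from.

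Next I would bound each resulting integral by Hölder's inequality with exponents matched to the norms appearing on the right-hand side. The term $\int_\Gamma (\divg V)(\Delta_\Gamma\eta)^2$ difference gives $\|\nabla_\Gamma V\|_{0,\infty}\|\Delta_\Gamma(u-\tilde u)\|_{0,2}(\|\Delta_\Gamma\tilde u\|_{0,2}+\|\Delta_\Gamma u\|_{0,2})$ directly. The two terms $\int_\Gamma \nabla_\Gamma\eta\cdot\mathcal{A}\nabla_\Gamma\eta$ (lower order, from the $\sigma$-term and the $\eta^2$ term) produce $\|\nabla_\Gamma V\|_{0,\infty}\|\nabla_\Gamma(u-\tilde u)\|_{1,2}(\|\nabla_\Gamma u\|_{1,2}+\|\nabla_\Gamma\tilde u\|_{1,2})$ after using the $H^1 \hookrightarrow L^4$ Sobolev embedding on $\Gamma$ (a surface in $\R^3$) to absorb the products into $H^1$-norms. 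The genuinely delicate term is $\int_\Gamma \nabla_\Gamma\Delta_\Gamma\eta\cdot\mathcal{A}\nabla_\Gamma\eta$, which contains a third derivative of $\eta$; for the piece where the error sits on the third-derivative factor I would pair $\|\Delta_\Gamma(u-\tilde u)\|_{1,p}$ (valid since $u\in W^{3,p}$ by Theorem \ref{thm:ExistenceRegularity}) with $\|\nabla_\Gamma u\|_{1,q}$ at the conjugate exponent $q=p^*$, using $W^{1,q}\hookrightarrow L^\infty$ since $q>2$; for the piece where the error sits on the first-derivative factor I would pair $\|\nabla_\Gamma(u-\tilde u)\|_{0,(2-\epsilon)/(1-\epsilon)}$ against $\|\Delta_\Gamma\tilde u\|_{1,2-\epsilon}$ (with $\nabla_\Gamma\Delta_\Gamma\tilde u \in L^{2-\epsilon}$ controlled by $\|\Delta_\Gamma\tilde u\|_{1,2-\epsilon}$) and the $L^\infty$ bound on $\mathcal{A}$ via $\|\nabla_\Gamma V\|_{0,\infty}$, the exponents $2-\epsilon$ and $(2-\epsilon)/(1-\epsilon)$ being Hölder-conjugate.

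The main obstacle is bookkeeping the exponents in the third-order term so that every factor lands in a norm that is actually finite: $u$ only has $W^{3,p}$ regularity for $p<2$ (so $\Delta_\Gamma u \notin H^1$ in general), while $\tilde{u}$ is assumed only to satisfy $-\Delta_\Gamma\tilde u \in W^{1,2-\delta}$, so one cannot symmetrically put $u$ and $\tilde u$ in the same spaces; the decomposition of $\int_\Gamma \nabla_\Gamma\Delta_\Gamma(\cdot)\cdot\mathcal{A}\nabla_\Gamma(\cdot)$ into "error on the high-derivative slot, $u$ elsewhere" versus "error on the low-derivative slot, $\tilde u$ elsewhere" must be chosen precisely so the surviving norms are exactly $\|\nabla_\Gamma u\|_{1,q}$ (finite, $q=p^*<\infty$) and $\|\Delta_\Gamma\tilde u\|_{1,2-\epsilon}$ (finite by hypothesis). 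Once the splitting is fixed, the remaining estimates are routine Hölder plus the Sobolev embedding $H^1(\Gamma)\hookrightarrow L^4(\Gamma)$ and $W^{1,q}(\Gamma)\hookrightarrow L^\infty(\Gamma)$ for $q>2$, and collecting all terms with the common factor $\|\nabla_\Gamma V\|_{0,\infty}$ pulled out yields the stated bound.
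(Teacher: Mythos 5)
Your proposal is correct and follows essentially the same route as the paper, whose proof is simply the remark that the bound follows from the explicit form of $\partial_e \Jstar$ in \eqref{eq:explicitDerivativeFormula} together with H\"older inequalities; your polarization of each quadratic term and the exponent bookkeeping (pairing $\|\Delta_\Gamma(u-\tilde u)\|_{1,p}$ with $\|\nabla_\Gamma u\|_{1,q}$ via the embedding $W^{1,q}(\Gamma)\hookrightarrow L^\infty(\Gamma)$ for $q>2$, and $\|\nabla_\Gamma(u-\tilde u)\|_{0,(2-\epsilon)/(1-\epsilon)}$ with $\|\Delta_\Gamma\tilde u\|_{1,2-\epsilon}$) just makes explicit the details the paper leaves implicit.
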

\begin{proof}
	This follows from the form $\partial_e\Jstar$ takes in \eqref{eq:explicitDerivativeFormula} and making use of H\"older inequalities.
\end{proof}
The particular form for the estimate above is chosen so that one may apply the error estimates of \cite{EllHer20-A} making use of a split formulation to approximate $u$ and $-\Delta_\Gamma u+u$ with linear finite elements.
There may be different estimates one wishes to show which relate to the formula of Proposition \ref{prop:RawDerivativeFormula}, for example, if one were to use a higher order discretisation of the membrane problem such as the method of \cite{LarLar17} which deals with a biharmonic problem on surfaces.


\subsection{Experiments}\label{sec:Example}
We now conduct a selection of numerical experiments.
These illustrate the formula and that the  method of difference quotients may be unreliable.
It is clear that the difference quotient will be slower - one would have to solve (at least) two algebraic systems, whereas when using the formula, a single algebraic system is solved and a functional evaluated.


For all of the experiments we fix $\kappa=\sigma=R=1$.
For the optimal membrane shape, $u(p)$, we approximate it by solving a penalised finite element problem, we call this $u_h(p)$.
The penalisation weakly enforces the point constraints and is done in order to ease the linear algebra.
We solve a split system for this fourth order problem, the well-posedness and analysis of the system is given in \cite{EllHer20-A} where the error due to using a penalty formulation is shown to be well controlled.
All the experiments have been implemented under the Distributed and Unified Numerics Environment (DUNE) \cite{AlkDedKlo16,BlaBurDed16}.


We begin with an experiment to demonstrate the convergence of the numerical calculation of the formula.
This is done by fixing a particle configuration and refining the computational mesh.
This experiment is then followed by some experiments where we fix the grid 
and vary the configuration to verify that the derivative we calculate matches the a difference quotient of the energy.
In these experiments we also see that the formula is a better method than using difference quotients.

We now define the quantities which we will calculate in the numerical experiments.

\begin{definition}
	Let $\Gamma_h$ be a connected, polygonal surface approximating $\Gamma$ and  $\mathcal{S}_h$ be the space of linear finite element functions on $\Gamma_h$.
	Given $v_h\in \mathcal{S}_h$ a finite element function, let $w_h\in \mathcal{S}_h$ satisfy
	\[
		\int_{\Gamma_h} \nabla_{\Gamma_h} v_h \cdot \nabla_{\Gamma_h} \eta_h + v_h \eta_h = \int_{\Gamma_h} w_h \eta_h
	\]
	for all $\eta_h\in \mathcal{S}_h$.
	We define
	\[
		J_h(v_h)
		:=
		\frac{1}{2}\int_{\Gamma_h} \kappa(w_h-v_h)^2 + \left(\sigma-\frac{2\kappa}{R^2}\right)|\nabla_{\Gamma_h} v_h|^2
		- \frac{2\sigma}{R^2}v_h^2,
	\]
	the discrete analogue of \eqref{eq:QuadraticSurfaceEnergy}.
	Define
	\[
		\mathcal{E}_h(p):= J_h(u_h(p)),
	\]
	the discrete analogue of Definition \ref{def:solution}, where $u_h(p)$ is the minimiser of $J_h$ over $\mathcal{S}_h$ such that $\int_{\Gamma_h}u_h(p)=0$ and $T(p)(u_h^l(p))=Z$.
		
	 Let $V_h = I_h V$, where $V$ is as in Definition \ref{def:DevineV} and $I_h\colon C(\Gamma) \to \mathcal{S}_h$ is the interpolation map.
	Then define $\mathcal{A}_h:= I ({\rm div}_{\Gamma_h} V_h) - \nabla_{\Gamma_h} V_h - \nabla_{\Gamma_h} V_h^T$ and
	\begin{align*}
		(\partial_e \Jstar)_h(v_h)
		:=
		-&\kappa\int_{\Gamma_h}\frac{1}{2}\left({\rm div}_{\Gamma_h}V\right) (v_h-w_h)^2 + \nabla_{\Gamma_h}(v_h-w_h)\cdot \mathcal{A}_h\nabla_{\Gamma_h} v_h
		\\
		+&
		\frac{1}{2}\left(\sigma - \frac{2\kappa}{R^2}\right)\int_{\Gamma_h} \nabla_{\Gamma_h} v_h \cdot \mathcal{A}_h \nabla_{\Gamma_h} v_h - \frac{\sigma}{R^2} \int_{\Gamma_h} \left({\rm div}_{\Gamma_h} V \right)v_h ^2,
	\end{align*}
	the discrete analogue of \eqref{eq:DerivativeFullForm}.
\end{definition}

Note that $(\partial_e J^*)_h$ is not necessarily the derivative of $\mathcal{E}_h$.
It is clear that the difference quotients we calculate will be approximations of the derivative of $\mathcal{E}_h$, should it exist, but not necessarily close to $(\partial_e J^*)_h$.

For the first three experiments we use $V(\cdot,\cdot)= \mathcal{V}(0,\cdot,\cdot)$ as in the construction in Definition \ref{CurlyV}. We take $\delta$ to be roughly $h$ so that the interpolation of $V$ has support on a  small, fixed number of vertices.
This makes the evaluation of the functional very quick.
For the remaining experiments, $V$ is constructed as in Section \ref{subsubsec:simpleChi}, where the $r$ and $\epsilon$ we use for the cut off function are taken to be $r= 0.75$ and $\epsilon = 0.15$.

For the presented convergence experiment, we do not  know the exact values of the quantities we estimate.
We take the error at level $h$ to be given by the difference between the value at level $h$ and the value on the most refined grid.
That is for quantity $F_h$ and smallest grid size $h^*$, we say the error $E_h$ is given by $|F_h-F_{h^*}|$.
For two grids with size $h_1$ and $h_2$, we say the EOC of $F_h$ is given by $\log(E_{h_1}/E_{h_2})/\log(h_1/h_2)$, we will take $h_1$ and $h_2$ to be from successively refined grids.

\subsubsection{Convergence experiment}
\label{subsec:ExperimentOne}
We begin by checking the formula and the finite element approximation. We consider $6$ particles each  consisting of a single point. The points and constraints are given by 
\begin{equation*}
	\begin{split}
	X_1=& (0,0,1)^T,\,~ Z_1 = 1;
	\\
	X_3 =& (0,1,0)^T,\,~ Z_3 = 0;
	\\
	X_5 =& (1,0,0)^T,\,~ Z_5 = 0.1;
	\end{split}
	\quad\quad
	\begin{split}
	X_2 =& (0,0,-1)^T,\, Z_2 = 0;
	\\
	X_4 =& (0,-1,0)^T,\, Z_4 = 0;
	\\
	X_6 =& (-1,0,0)^T,\, Z_6 =0.
	\end{split}
\end{equation*}
Approximate evaluations of the  derivative in the direction $e= (1,0,0)^T \in T_{X_1}\Gamma$
are computed together with  approximations of the energy. For each finite element mesh size  $h$, we calculate
\[
	\mathcal{E}_h(0),\, \mathcal{E}_h(\theta(\delta_h)),\, \mathcal{E}_h(-\theta(\delta_h)),\, (\partial_e \Jstar)_h(u_h).
\]

 Here  $\mathcal{E}_h(\theta(\delta))$ denotes the energy  where the point $X_1$ is replaced by the point
 \[
 	X_1(\theta(\delta)) := (\sin(\theta(\delta)),0,\cos(\theta(\delta)))^T
 	\mbox{ with }
	\theta(\delta):=\arcsin\left(\frac{\delta}{\sqrt{\delta^2+(\delta-1)^2}}\right).
\]
We are then able to compute another approximation to $\partial_e \mathcal{E}(0)$ using a difference quotient
\[
	DQ_h:=\frac{(\mathcal{E}_h(\theta(\delta_h))-\mathcal{E}_h(-\theta(\delta_h)))}{( \theta(\delta_h)-\theta(-\delta_h))}
\]
of the energies.
The function $\theta$ and the values of $\delta_h$ are chosen so that $X_1(\pm\theta(\delta_h))$ lie on a vertex of the grid. The results are tabulated in Table \ref{table:ExperimentOneTable1}. Observe that the energy $\mathcal{E}_h(0)$, the difference quotient $DQ_h$ and the derivative 
$(\partial_e \Jstar)_h(u_h)$ appear to converge as $h\rightarrow 0$.
The experimental order of convergence of the derivative quantities are displayed in Table \ref{table:ExperimentOneTable2}.
\begin{table}
\caption{Calculated quantities for experiment in Subsection \ref{subsec:ExperimentOne}}\label{table:ExperimentOneTable1}
\begin{center}
\begin{tabular}{|c|c|c|c|c|c|c|}
\hline
$h$ & $\delta_h$ & $\mathcal{E}_h(-\theta(\delta_h))$ & $\mathcal{E}_h(0)$ & $\mathcal{E}_h(\theta(\delta_h))$ & $(\partial_e \Jstar)_h(u_h)$ & $DQ_h$\\ \hline
0.301511  & 0.25               & 16.7958              & 17.199                & 16.3577                          & -1.2195             & -1.5438  \\\hline
0.152499  & 0.125              & 15.524               & 15.5781               & 15.3318                          & -1.33257            & -1.4439  \\\hline
0.0764719 & 0.0625             & 15.0356              & 15.0309               & 14.945                           & -1.37356            & -1.40516 \\\hline
0.0382639 & 0.03125            & 14.8615              & 14.8509               & 14.8174                          & -1.38244            & -1.39168 \\\hline
0.0191355 & 0.0078125          & 14.8006              & 14.7929               & 14.7788                          & -1.38464            & -1.38720 
\\ \hline
\end{tabular}
\end{center}
\end{table}
\begin{table}
\caption{Derived quantities for experiment in Subsection \ref{subsec:ExperimentOne}}\label{table:ExperimentOneTable2}
\begin{center}\begin{tabular}{|c|c|c|c|c|}
\hline
$h$       & $E_{\partial_e\Jstar_h}$ & $E_{DQ_h}$   & EOC$_{\partial_e\Jstar_h}$ & EOC$_{DQ_h}$ \\ \hline
0.301511  & 0.165134   & 0.156597   & --	& -- 	\\\hline
0.152499  & 0.0520672  & 0.0567013  & 1.69327    & 1.49032       \\\hline
0.0764719 & 0.0110707  & 0.0179647  & 2.24306    & 1.66523       \\\hline
0.0382639 & 0.00219195 & 0.00448579 & 2.33893    & 2.00384       \\\hline
0.0191355 & --         & --         & --         & --                      \\ \hline
\end{tabular}
\end{center}
\end{table}
%
\subsubsection{Experiment for simple particles lying on vertices}
\label{Subsec:CalibrationExperiment1}
For this experiment, we compute approximations of the energy and the derivative on  a sequence of configurations parametrised by the location of one point $X_1(t)$. The  configuration is defined for each $t$ by 
\begin{equation*}
\begin{split}
&X_1(t) = (\sin(\theta(t)),0,\cos(\theta(t)))^T,\,~ Z_1 = 0.1;
\\
&X_2 = (0,0,-1)^T,\,~ Z_2 = 0;
\\
&X_4 = (0,-1,0)^T,\,~ Z_4 = 0;
\end{split}
~~
\begin{split}
\hspace{1pt}
\\
X_3 =& (0,1,0)^T,\,~~~ Z_3 = 0;
\\
X_5 =& (-1,0,0)^T,\,~ Z_5 = 0,
\end{split}
\end{equation*}
where, $\theta$ is again defined by $\theta(t):=\arcsin\left(\frac{t}{\sqrt{t^2+(t-1)^2}}\right)$.
With this choice of $\theta$ we have that the points $X_1,...,X_5$ lie on vertices of our chosen grid for each evaluation of $t$.
We calculate $\mathcal{E}_h(t)$ and $( \partial_e\Jstar)_h(u_h(t))$
for $t \in \left\{\frac{m}{2^5}: m \in \mathbb{N}_0, m \leq 2^5 \right\}$.
In Figure \ref{fig:ExperimentFourFig1}, we plot $\mathcal{E}_h(t)$.
The values $( \partial_e\Jstar)_h(u_h(t))$ with the difference quotient of $\mathcal{E}_h(t)$ and also the difference between them are given in Figure \ref{fig:ExperimentFourFig2ForcingAndDQ}.
One may calculate that the relative error has a maximum of $2\%$ at the boundary and is below $1\%$ for the interior.

\begin{figure}
\begin{center}
\includegraphics[scale=0.5]{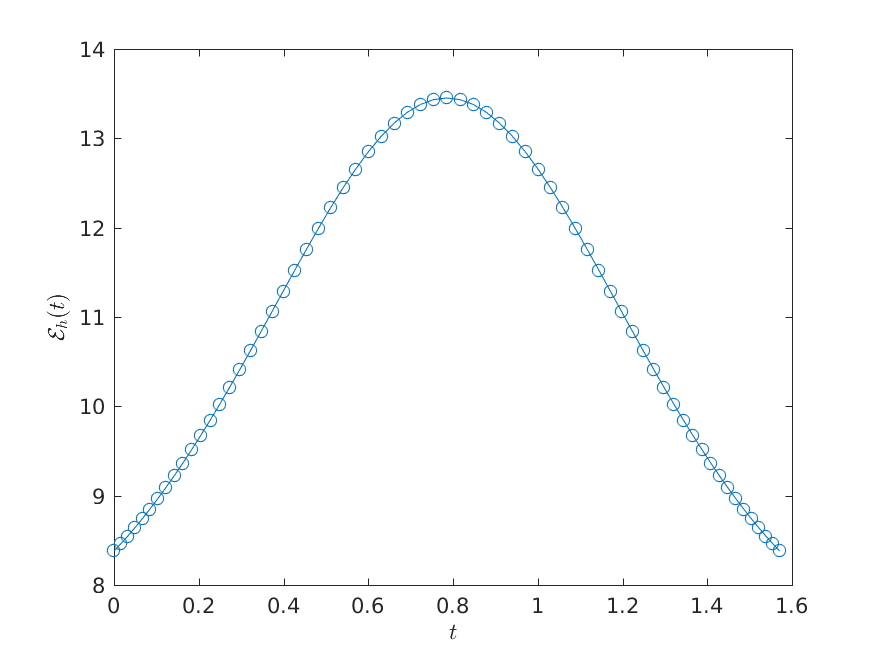}
\caption{Energy $\mathcal{E}_h(t)$ for experiment in Subsection \ref{Subsec:CalibrationExperiment1}}\label{fig:ExperimentFourFig1}
\end{center}
\end{figure}

\begin{figure}
\begin{center}
\subfigure[$(\partial_e\Jstar)_h$ and $DQ_h$]{
\includegraphics[scale=0.4]{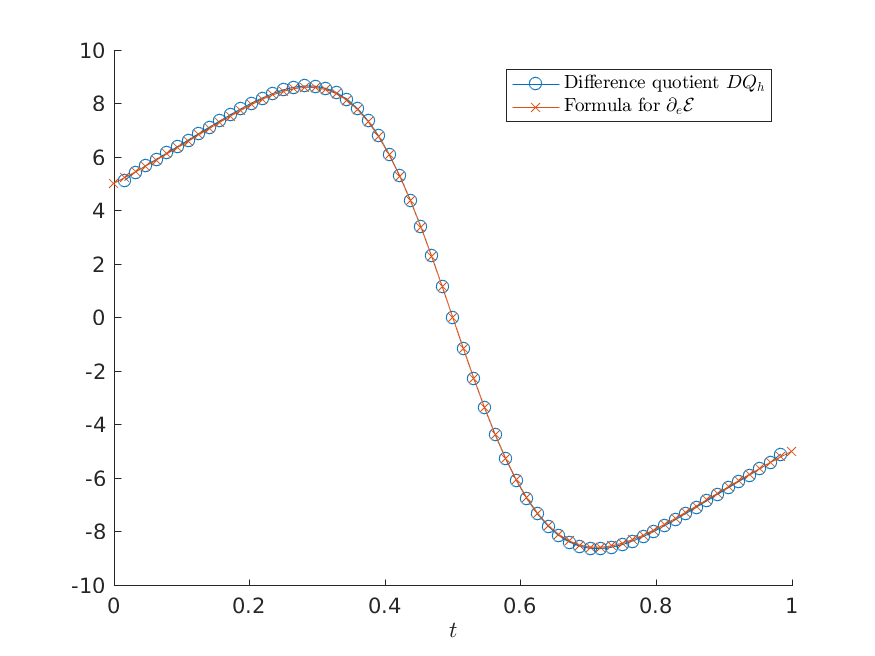}}
\subfigure[$|(\partial_e\Jstar)_h-DQ_h|$]{
\includegraphics[scale=0.4]{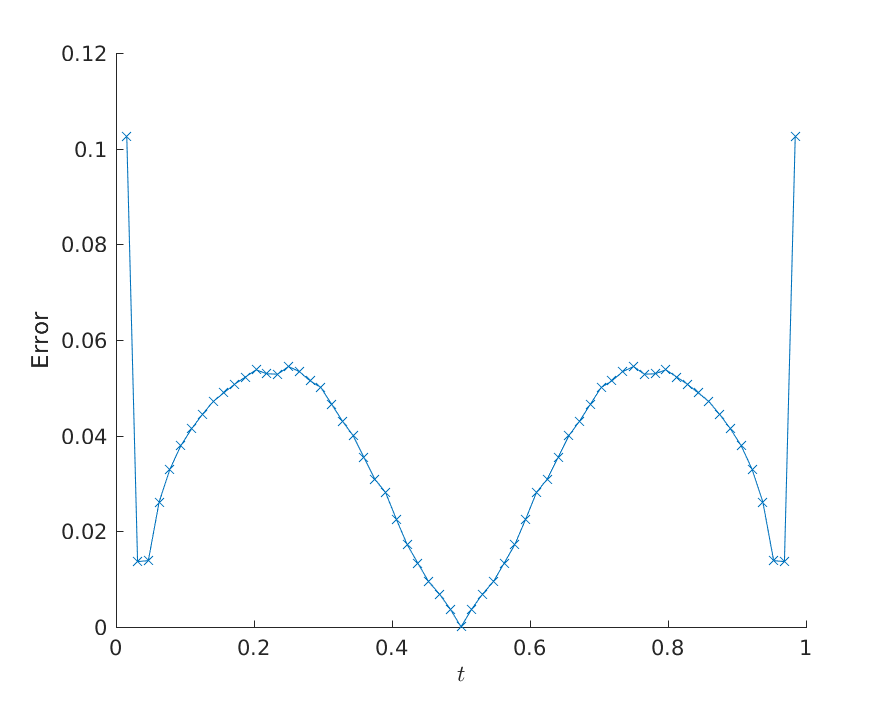}
}
\caption{Graphs of quantities from experiment in Subsection \ref{Subsec:CalibrationExperiment1}}\label{fig:ExperimentFourFig2ForcingAndDQ}
\end{center}
\end{figure}

\subsubsection{Experiment for simple particles not lying on vertices of the grid}\label{Subsec:CalibrationExperiment2}
We now provide a perturbation of the above experiment.
This experiment is to demonstrate that when the constraint points do not lie on the vertices of the grid, the difference quotient becomes a less reliable method.
For this experiment we choose $t\in \{\frac{m}{100}: m \in \mathbb{N}_0, m \leq 100\}$.
We plot the same quantities as in the previous experiment.
In Figure \ref{fig:ExperimentFiveFig1}, we plot $\mathcal{E}_h(t)$, we notice it has the same characteristic shape as the previous experiment.
For Figure \ref{fig:ExperimentFiveFig2ForcingAndDQ}, we plot $( \partial_e\Jstar)_h(u_h(t))$ with the difference quotient of $\mathcal{E}_h(t)$ and also the difference between them.
We notice that here, the difference quotient does not match the formula as well as in the previous experiment.

\begin{figure}\begin{center}
\includegraphics[scale=0.5]{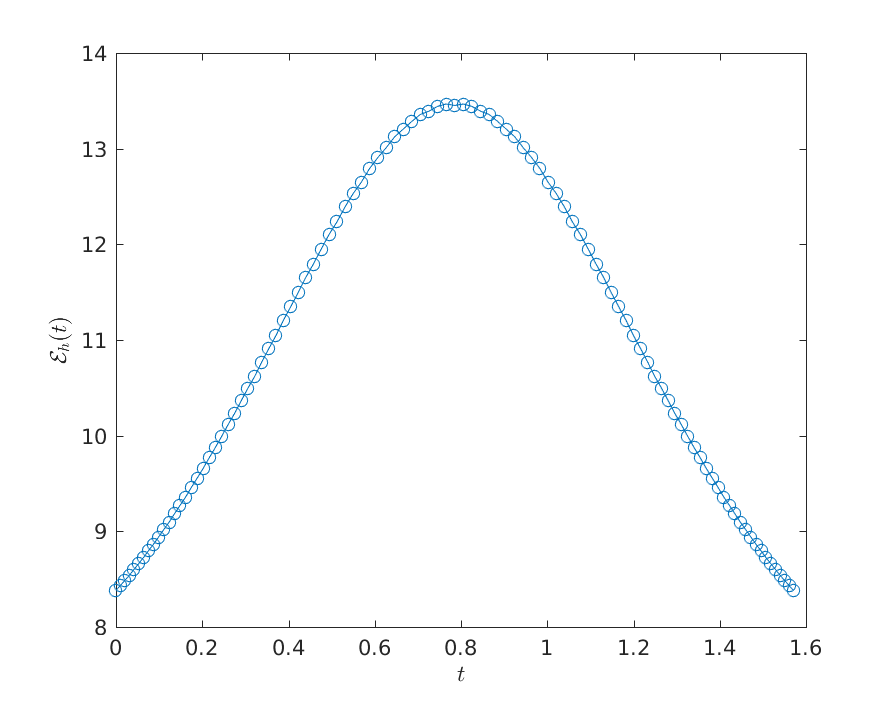}
\caption{Energy $\mathcal{E}_h(t)$ for experiment in Subsection \ref{Subsec:CalibrationExperiment2}}\label{fig:ExperimentFiveFig1}\end{center}
\end{figure}

\begin{figure}\begin{center}
\subfigure[$(\partial_e\Jstar)_h$ and $DQ_h$]{
\includegraphics[scale=0.4]{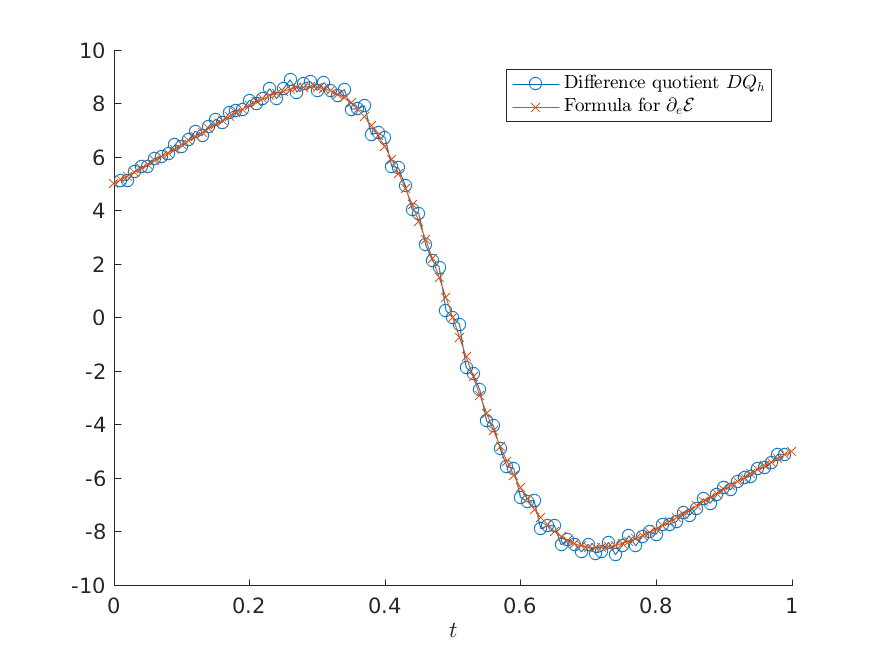}}
\subfigure[$|(\partial_e\Jstar)_h-DQ_h|$]{
\includegraphics[scale=0.4]{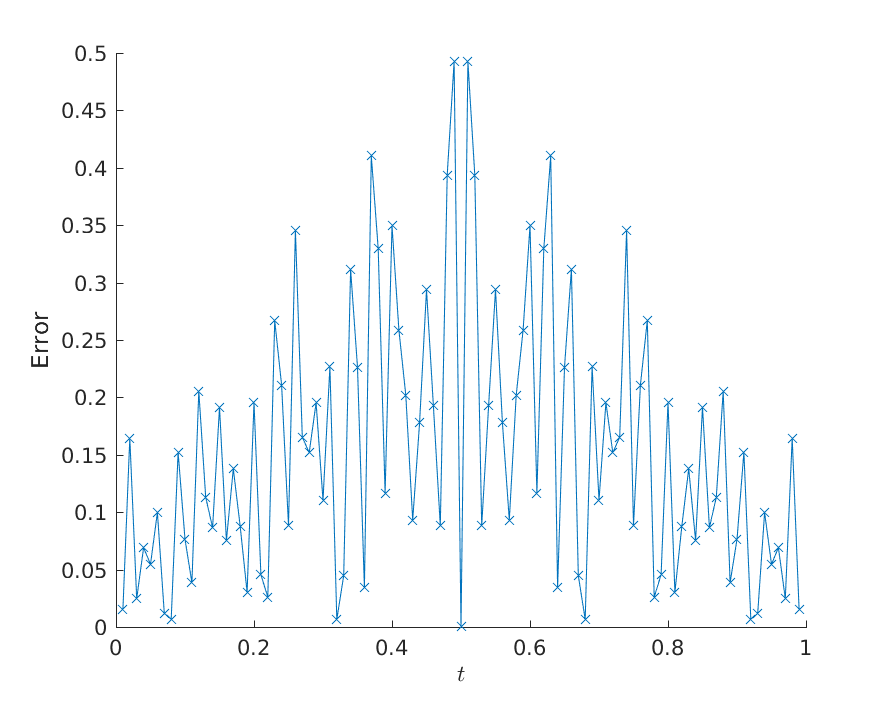}}
\caption{Graphs of quantities from experiment in Subsection \ref{Subsec:CalibrationExperiment2}}\label{fig:ExperimentFiveFig2ForcingAndDQ}
\end{center}
\end{figure}

\subsubsection{Experiment for non-trivial particles}\label{Subsec:CalibrationExperiment3}
This experiment now deals with two non-trivial particles whereby there is little chance of the points lying on vertices unless one is tailoring the grid to the points.
We will see that the difference quotients become highly unreliable.
We describe the base of the particle $\cee_1$ with centre $X_{\mathcal{G}_1} = (0,0,1)^T$.
We have that $\cee_1 = \{X_i\}_{i=1}^8$ with
\begin{equation*}
\begin{split}
	X_1 =& (0.5, 0, \sqrt{1- 0.5^2})^T,
	\\
	X_3 =& (0.25, 0.25, \sqrt{1- 0.25^2 - 0.25^2})^T,
	\\
	X_5 =& (0.25, -0.25, \sqrt{1- 0.25^2 - 0.25^2})^T,
	\\
	X_7 =& (0, 0.125, \sqrt{1 - 0.125^2})^T,
\end{split}
\quad \quad
\begin{split}
	X_2 =& (-0.5, 0, \sqrt{1- 0.5^2})^T,
	\\
	X_4 =& (-0.25, 0.25, \sqrt{1- 0.25^2 - 0.25^2})^T,
	\\
	X_6 =& (-0.25, -0.25, \sqrt{1- 0.25^2 - 0.25^2})^T,
	\\
	X_8 =& (0, -0.125, \sqrt{1- 0.125^2})^T,
\end{split}
\end{equation*}
and $(Z_1)_i = 1-\frac{1}{5}(X_i)_1^2$ for $i=1,...,8$.
We let
\[
	\cee_2:= \{ x = (x_1,x_2,x_3)^T \in \Gamma : (x_1,x_3,-x_2)^T \in \cee_1\},
\]
with $(Z_2)_i = 1-\frac{1}{5}(X_i)_1^2$ for $i=1,...,8$.

We consider the rotation of $\cee_1$ about the north pole, we write 
$\cee_1(t) := \cee(0,\frac{\pi}{2} t)$.
We calculate the quantities $\mathcal{E}_h(t)$ and $(\partial_e \Jstar)_h(u_h(t))$ for $t \in \{ \frac{m}{2^4} : m \in N_0, m \leq 2^5 \}$.
We plot $\mathcal{E}(t)$ in Figure \ref{fig:ExperimentSixEnergy}.
In Figure \ref{fig:ExperimentSixDerived} we plot $(\partial_e\Jstar)_h(u_h(t))$ and the central difference quotient for $\mathcal{E}_h(t)$.
\begin{figure}\begin{center}
\includegraphics[scale=0.5]{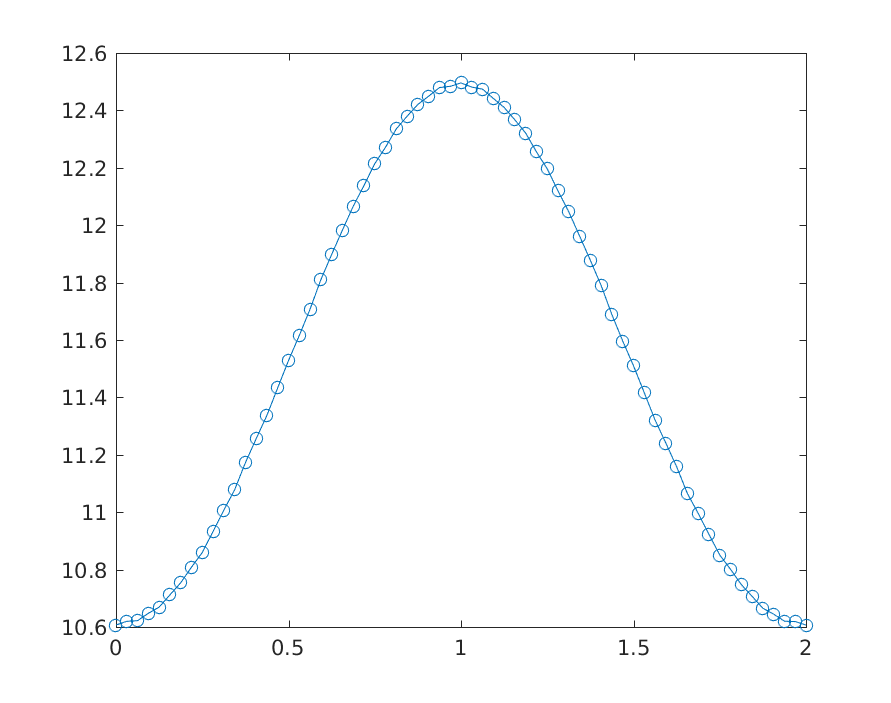}
\caption{Energy $\mathcal{E}_h(t)$ for experiment in Subsection \ref{Subsec:CalibrationExperiment3}}\label{fig:ExperimentSixEnergy}
\end{center}
\end{figure}
\begin{figure}\begin{center}
\subfigure[$(\partial_e\Jstar)_h$ and $DQ_h$]{
\includegraphics[scale=0.4]{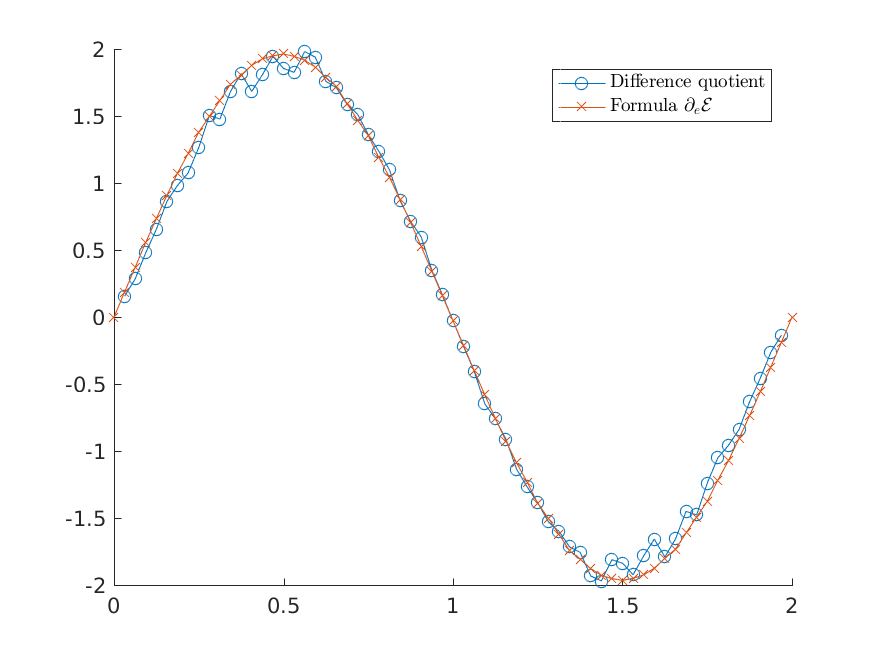}}
\subfigure[$|(\partial_e\Jstar)_h-DQ_h|$]{
\includegraphics[scale=0.4]{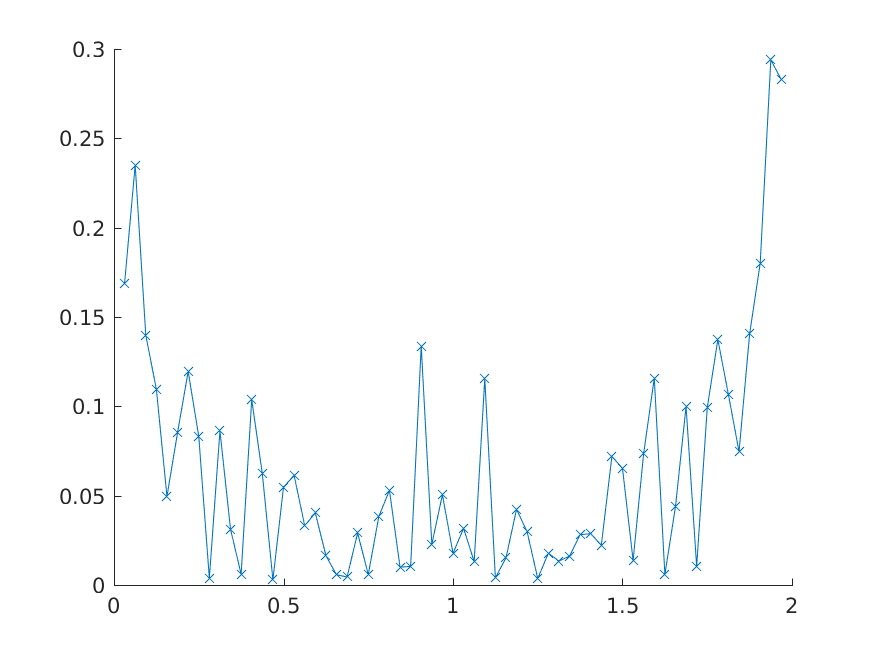}}
\caption{Graphs of quantities from experiment in experiment in Subsection \ref{Subsec:CalibrationExperiment3}}\label{fig:ExperimentSixDerived}
\end{center}
\end{figure}

\subsubsection{Experiment to observe the numerical error of a trivial system}\label{Subsec:CalibrationExperiment4}
We notice that the difference quotient in the previous experiment is extremely noisy, in this experiment, we consider a perturbation of the above experiment, where we remove $\cee_2$ so that, in light of Corollary \ref{cor:ZeroDerivative}, we are approximating zero.
The quantities from this experiment are plotted in Figure \ref{fig:ExperimentSevenDerived} where it is seen that there are moderately large perturbations from the average of the energy and the derivative is quite small, as expected.

\begin{figure}\begin{center}
\subfigure[Deviation from average of $\mathcal{E}_h(t)$]{
\includegraphics[scale=0.4]{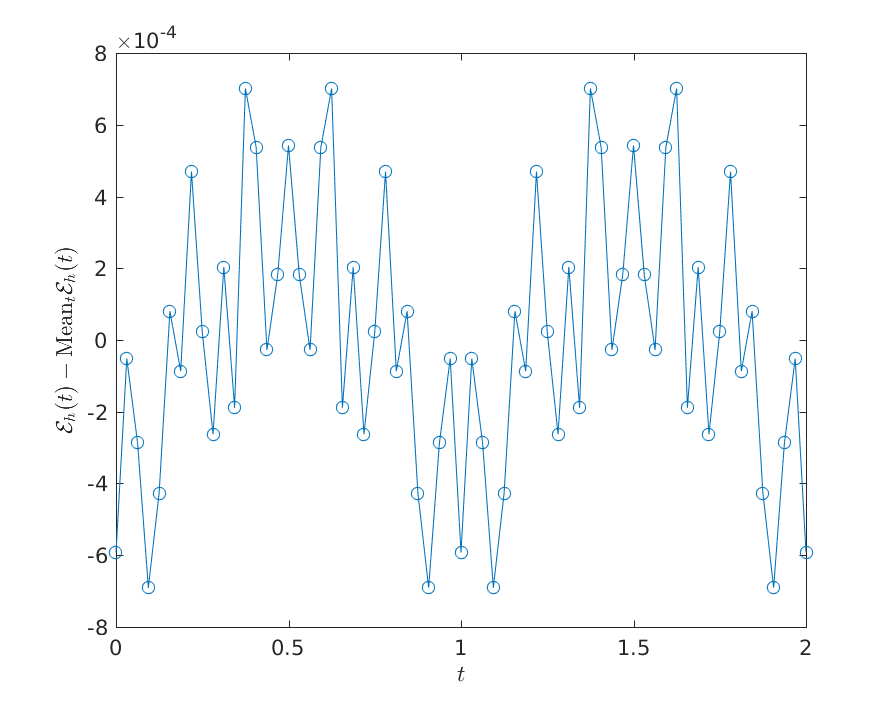}}
\subfigure[$(\partial_e \Jstar)_h$]{
\includegraphics[scale=0.4]{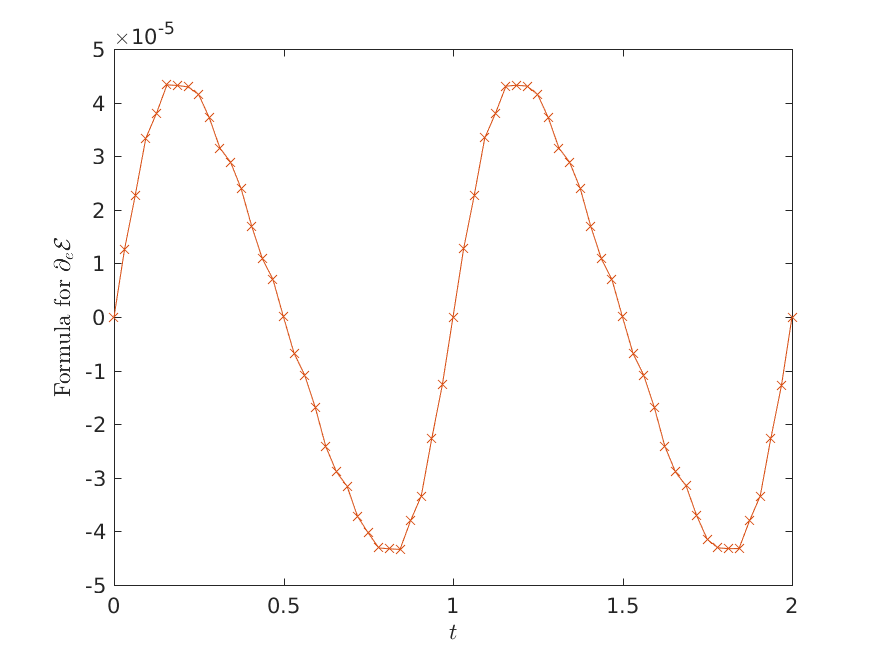}}
\caption{Graphs for experiment in Subsection \ref{Subsec:CalibrationExperiment4}}\label{fig:ExperimentSevenDerived}
\end{center}
\end{figure}

%
%

\subsubsection{Application of formula}\label{subsec:application}
We now give the results of a numerical experiment which shows that for a perturbation of our non-trivial particles, they demonstrate a preferential orientation.
The idea of our experiment is to consider a particle based at a pole and a particle based at the equator.
We then calculate the derivative of the energy as the particle at the pole is moved towards the particle at the equator.
This experiment is then redone after rotating the particle at the pole by $\frac{\pi}{2}$.
We define the particle $\cee_1 = \{X_i\}_{i=1}^8$ by
\begin{equation*}
\begin{split}
	X_1 =& (0.3, 0, \sqrt{1- 0.3^2})^T,
	\\
	X_3 =& (0.15, 0.15, \sqrt{1- 0.15^2 - 0.15^2})^T,
	\\
	X_5 =& (0.15, -0.15, \sqrt{1- 0.15^2 - 0.15^2})^T,
	\\
	X_7 =& (0, 0.075, \sqrt{1 - 0.075^2})^T,
\end{split}
\quad \quad
\begin{split}
	X_2 =& (-0.3, 0, \sqrt{1- 0.3^2})^T,
	\\
	X_4 =& (-0.15, 0.15, \sqrt{1- 0.15^2 - 0.15^2})^T,
	\\
	X_6 =& (-0.15, -0.15, \sqrt{1- 0.15^2 - 0.15^2})^T,
	\\
	X_8 =& (0, -0.075, \sqrt{1- 0.075^2})^T,
\end{split}
\end{equation*}
and $(Z_1)_i = 1-0.9(X_i)_1^2$ for $i=1,...,8$.
We give this centre $X_{\mathcal{G}_1}:= (0,0,1)^T$.
We define $\cee_2$ by
\[
	\cee_2 := \{x = (x_1,x_2,x_3)^T \in \Gamma : (x_1,x_3,-x_2)^T \in \cee_1\},
\]
with $(Z_2)_i = 1-10(X_i)_1^2$ for $i=1,...,8$ and centre .

We calculate the derivative at $0 \in \prod_{i=1}^2 (\R \times T_{X_{\mathcal{G}_i}}\Gamma)$ in direction $e=(0,\tau,0,0)$, where $\tau=(0,1,0)^T \in T_{X_{\mathcal{G}_1}}$ represents the translation of $\cee_1$ in the direction $\tau$.

We then calculate the derivative at $p:=(\frac{\pi}{2},0,0,0)$ in the same direction $e$.

We find that
\[
	(\partial_eJ^*)_h(0) \approx -10.6729 \quad \mbox{and} \quad (\partial_eJ^*)_h(p) \approx \mbox{18.5636}.
\]
This shows that the orientation affects whether the particles are attracted to each other, with one orientation being repulsive and the other attractive.
In Figure \ref{fig:MembranesForApplciation} we give the numerical approximations for membranes $u(0)$ and $u(p)$.
\begin{figure}
\begin{center}
\includegraphics[width=.45\linewidth]{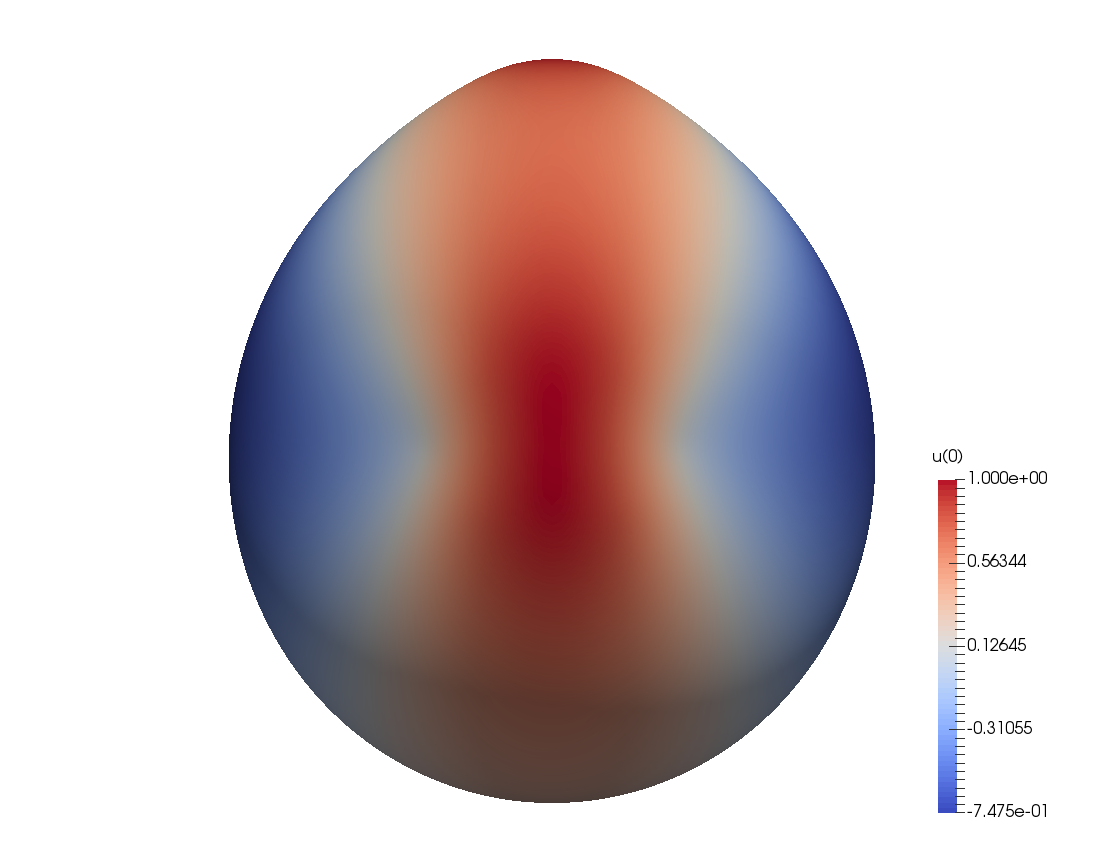}
\includegraphics[width=.45\linewidth]{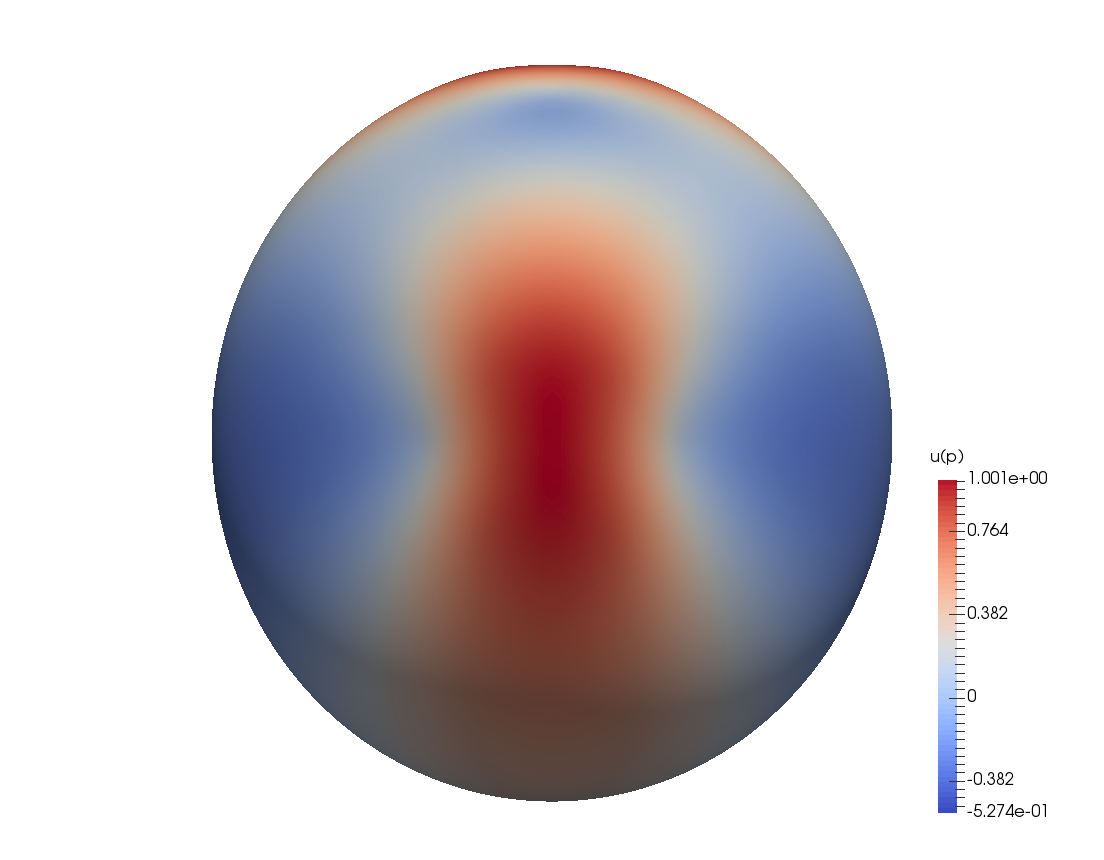}
\end{center}
\caption{The rescaled graphs of the membranes from experiment in Subsection \ref{subsec:application}, left $0.1\,u(0)$, right $0.1\,u(p)$, both with $(0,1,0)^T$ coming out of the page and $(0,0,1)^T$ pointing up.
The colours represent the magnitude of the deformation.}\label{fig:MembranesForApplciation}
\end{figure}


\section{Conclusion}
In this article we have shown the differentiability of $\mathcal{E}(p)$, the membrane mediated interaction energy for a near spherical membrane with particles attached at points which depend smoothly on $p$.
Further to showing the differentiability, we have given an explicit formula to calculate the derivative and give numerical examples which demonstrate that this formula would appear to be more robust than a difference quotient approach.

It would be of interest to extend this analysis for particles which are able to move more generally, tilting and moving out from the surface.
Furthermore it is desirable to consider the problem for inequality constraints on the 'interior' of a particle.
Finally, one could analyse higher order derivatives of the energy so that one could determine stability of a given configuration.

\section*{Acknowledgements}
The work of CME was partially supported by the Royal Society via a Wolfson Research Merit Award. 
The research of PJH was funded by the Engineering and Physical Sciences Research Council grant  EP/H023364/1 under the MASDOC centre for doctoral training at the University of Warwick.

\appendix
\newcommand{\Chi}{\mathrm{X}}
\section{The pullback to a reference domain}\label{sec:Reformulation}

We give some general results on the calculation of the composition of pullbacks and derivatives, where we consider that the image and domain of the diffeomorphism need not be the same.
As we are working with different surfaces, we will need to make clear to which surface geometric quantities belong to, this is done with a superscript of the surface, e.g. $H^{\Gamma_1}$ is the mean curvature of $\Gamma_1$ and $H^{\g}$ the mean curvature of $\g$.
Consider the case of $\Gamma_0$ and $\Gamma_1$ being $C^k$, compact surfaces, with $\Chi\colon \Gamma_0 \to \Gamma_1$ a $C^k$-diffeomorphism, where we require $k\geq 2$.

Given some function $u\colon \Gamma_1 \to \mathbb{R}$ we wish to obtain expressions for $(\nabla_{\Gamma_1} u) \circ \Chi$ and $(\underline{D}^2_{\Gamma_1} u) \circ \Chi$.
The first part of this is developed in \cite{ChuDjuEll19}, where also the trace of the second quantity, the Laplace-Beltrami, is calculated.
Although for the model we consider in this work, the surface Hessian is not required, we compute it for completion as it may arise in other elastic type models, where the  Hessian regularly arises.
We choose to do this in an method which avoids integration by parts so that surfaces with boundary may be considered.

\begin{lemma}\label{lem:FirstOrderTrans}
	Let 
	$u \in H^1(\Gamma_1)$,
	then $u \circ \Chi \in H^1(\Gamma_0)$ and
	\[
		(\nabla_{\Gamma_1} u) \circ \Chi
		=
		\left( \nabla_{\Gamma_0}\Chi + \nu^{\Gamma_1}\circ \Chi \otimes \nu^{\Gamma_0} \right)^{-T} \nabla_{\Gamma_0} (u\circ \Chi)
		=
		\nabla_{\Gamma_0}\Chi G_{\Gamma_0}^{-1} \nabla_{\Gamma_0}(u\circ \Chi),
	\]
	where $G_{\Gamma_0}:= \nabla_{\Gamma_0}\Chi^T\nabla_{\Gamma_0}\Chi + \nu^{\Gamma_0} \otimes \nu^{\Gamma_0}$.
\end{lemma}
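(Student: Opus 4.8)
The plan is to reduce to the case $u\in C^1(\Gamma_1)$ by density and then carry out the computation via smooth extensions, the two key structural facts being the invertibility of the matrix $B_{\Gamma_0}:=\nabla_{\Gamma_0}\Chi+(\nu^{\Gamma_1}\circ\Chi)\otimes\nu^{\Gamma_0}$ and the identity $(\nabla_{\Gamma_0}\Chi)^T(\nu^{\Gamma_1}\circ\Chi)=0$. First I would record these: since $\Chi$ is a $C^k$-diffeomorphism with $k\geq 2$, its differential restricts to a linear isomorphism $T_x\Gamma_0\to T_{\Chi(x)}\Gamma_1$, and this restricted map is exactly the action of $\nabla_{\Gamma_0}\Chi$ on tangent vectors, while $\nabla_{\Gamma_0}\Chi\,\nu^{\Gamma_0}=0$. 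Hence the range of $\nabla_{\Gamma_0}\Chi$ is $T_{\Chi(x)}\Gamma_1$ and the range of $(\nabla_{\Gamma_0}\Chi)^T$ is $T_x\Gamma_0$; in particular $(\nabla_{\Gamma_0}\Chi)^T$ annihilates $\nu^{\Gamma_1}\circ\Chi$, and $B_{\Gamma_0}$ is invertible because it maps $T_x\Gamma_0$ isomorphically onto $T_{\Chi(x)}\Gamma_1$ and $\nu^{\Gamma_0}$ to $\nu^{\Gamma_1}\circ\Chi$.

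For $u\in C^1(\Gamma_1)$, I would pick a $C^k$ extension $\widetilde\Chi$ of $\Chi$ to a tubular neighbourhood of $\Gamma_0$ (for instance through the closest point projections, so it maps a neighbourhood of $\Gamma_0$ into one of $\Gamma_1$) and a $C^1$ extension $\tilde u$ of $u$; then $\tilde u\circ\widetilde\Chi$ is an admissible extension of $u\circ\Chi$, so by Lemma 2.4 of \cite{DziEll13} and the Euclidean chain rule, $\nabla_{\Gamma_0}(u\circ\Chi)=P_{\Gamma_0}(\nabla\widetilde\Chi)^T(\nabla\tilde u)\circ\widetilde\Chi$. Decomposing $(\nabla\tilde u)\circ\Chi=(\nabla_{\Gamma_1}u)\circ\Chi+c\,(\nu^{\Gamma_1}\circ\Chi)$ with $c:=(\nu^{\Gamma_1}\cdot\nabla\tilde u)\circ\Chi$, and using $P_{\Gamma_0}(\nabla\widetilde\Chi)^T=(\nabla_{\Gamma_0}\Chi)^T$ componentwise, the normal part drops out since $(\nabla_{\Gamma_0}\Chi)^T(\nu^{\Gamma_1}\circ\Chi)=0$, leaving $\nabla_{\Gamma_0}(u\circ\Chi)=(\nabla_{\Gamma_0}\Chi)^T(\nabla_{\Gamma_1}u)\circ\Chi$. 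As $(\nabla_{\Gamma_1}u)\circ\Chi$ is tangential to $\Gamma_1$ it is killed by $\nu^{\Gamma_0}\otimes(\nu^{\Gamma_1}\circ\Chi)$, so the right-hand side equals $B_{\Gamma_0}^T(\nabla_{\Gamma_1}u)\circ\Chi$; inverting $B_{\Gamma_0}$ gives the first identity.

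For the second identity I would expand $B_{\Gamma_0}^TB_{\Gamma_0}$: using $(\nabla_{\Gamma_0}\Chi)^T(\nu^{\Gamma_1}\circ\Chi)=0$ and $|\nu^{\Gamma_1}\circ\Chi|=1$ the cross terms vanish, yielding $B_{\Gamma_0}^TB_{\Gamma_0}=(\nabla_{\Gamma_0}\Chi)^T\nabla_{\Gamma_0}\Chi+\nu^{\Gamma_0}\otimes\nu^{\Gamma_0}=G_{\Gamma_0}$, whence $B_{\Gamma_0}^{-T}=B_{\Gamma_0}G_{\Gamma_0}^{-1}$. Since $B_{\Gamma_0}$ and $\nabla_{\Gamma_0}\Chi$ agree on tangent vectors, and $G_{\Gamma_0}^{-1}$ preserves $T_x\Gamma_0$ (it fixes $\nu^{\Gamma_0}$ and maps $T_x\Gamma_0$ into itself), applying both sides to the tangential vector $\nabla_{\Gamma_0}(u\circ\Chi)$ gives $B_{\Gamma_0}^{-T}\nabla_{\Gamma_0}(u\circ\Chi)=\nabla_{\Gamma_0}\Chi\,G_{\Gamma_0}^{-1}\nabla_{\Gamma_0}(u\circ\Chi)$. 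Finally, the passage from $C^1$ to $H^1$ is routine: composition with the $C^k$-diffeomorphism $\Chi$ is a bounded map $H^1(\Gamma_1)\to H^1(\Gamma_0)$ by the change-of-variables formula, and the coefficients $\nabla_{\Gamma_0}\Chi$ and $G_{\Gamma_0}^{-1}$ are bounded and continuous, so the identity extends by density of $C^1(\Gamma_1)$ in $H^1(\Gamma_1)$. I expect the only real obstacle to be the bookkeeping of index conventions for $\nabla_{\Gamma_0}\Chi$ and the careful verification that $(\nabla_{\Gamma_0}\Chi)^T$ kills $\nu^{\Gamma_1}\circ\Chi$ and that $B_{\Gamma_0}$ is invertible; the rest is a mechanical chain-rule computation.
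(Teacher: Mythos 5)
Your proposal is correct, and it follows the standard route: the paper itself does not prove this lemma but defers to \cite[Lemma 3.2]{ChuDjuEll19}, whose argument is essentially the same chain-rule computation with extensions that you give, combined with the algebraic facts $B^T\left(\nu^{\Gamma_1}\circ\Chi\right)$-annihilation, $B\,\nu^{\Gamma_0}=\nu^{\Gamma_1}\circ\Chi$ and $B^TB=G_{\Gamma_0}$ (the latter is exactly the identity recorded in the paper immediately after the lemma). Your handling of the remaining points — invertibility of $B$, tangentiality of $G_{\Gamma_0}^{-1}\nabla_{\Gamma_0}(u\circ\Chi)$ so that $B$ may be replaced by $\nabla_{\Gamma_0}\Chi$, and the density argument extending the identity from $C^1$ to $H^1$ — is sound.
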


The proof is shown in Lemma 3.2 of \cite{ChuDjuEll19}.
We write $B := \nabla_{\Gamma_0} \Chi + \nu^{\Gamma_1} \circ \Chi \otimes \nu^{\Gamma_0}$, which satisfies
\[
	B^T B = G_{\Gamma_0}.
\]
This gives a simpler form of the above lemma,
\[
	(\nabla_{\Gamma_1} u) \circ \Chi = B^{-T} \nabla_{\g} (u \circ \Chi).
\]

\begin{lemma}\label{lem:SecondOrderTrans}
	Let 
	$u \in H^2(\Gamma_1)$, 
	then $u \circ \Chi \in H^2(\Gamma_0)$ and for $i,j=1,...,n+1$ 
\begin{align*}
	(\D{i}^{\Gamma_1} \D{j}^{\Gamma_1} u) \circ \Chi
	 =&
	 \frac{1}{b}{\rm div}_\g \left( b B^{-1} (B^{-T}\nabla_\g \hat{u} )_j\right)_i
	 \\
	 &+
	 (H^{\Gamma_1}\circ \Chi - H^\g)(\nu_i^{\Gamma_1} \circ \Chi)(B^{-T} \nabla_\g \hat{u})_j,
\end{align*}
where $b = \det(B),\, b_{ij} = B_{ij}$ and $b^{ij} = \left(B^{-1}\right)_{ij}$.
\end{lemma}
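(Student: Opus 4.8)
The plan is to build the second-order formula from the first-order formula of Lemma \ref{lem:FirstOrderTrans} by applying that lemma a second time, but now to each component of the gradient vector field rather than to a scalar. Write $\hat{u} := u \circ \Chi$ and recall that Lemma \ref{lem:FirstOrderTrans} gives $(\nabla_{\Gamma_1} u)\circ \Chi = B^{-T}\nabla_\g \hat u$. The components $(\nabla_{\Gamma_1} u)_j$ are themselves functions on $\Gamma_1$, so I would like to apply the first-order transformation rule to $w_j := (\nabla_{\Gamma_1} u)_j$; however $\underline D_i^{\Gamma_1}\underline D_j^{\Gamma_1} u = \underline D_i^{\Gamma_1} w_j$ only after one is careful that $\underline D_j^{\Gamma_1} u$ really is the $j$th component of the tangential gradient (it is, by definition). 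Thus $(\underline D_i^{\Gamma_1}\underline D_j^{\Gamma_1}u)\circ\Chi = (\underline D_i^{\Gamma_1} w_j)\circ\Chi = \bigl(B^{-T}\nabla_\g (w_j\circ\Chi)\bigr)_i$, and $w_j\circ\Chi = (B^{-T}\nabla_\g\hat u)_j$. So the raw object to manipulate is $\bigl(B^{-T}\nabla_\g\bigl[(B^{-T}\nabla_\g\hat u)_j\bigr]\bigr)_i$.

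Next I would convert the expression $\bigl(B^{-T}\nabla_\g v\bigr)_i$, for a scalar $v$ on $\g$, into the divergence form appearing in the statement. The key identity is the Piola-type transformation: for a tangential vector field $\g$, one has the pointwise relation between $B^{-T}\nabla_\g(\cdot)$ and $\tfrac1b\,\divg\!\bigl(bB^{-1}(\cdot)\bigr)$ when acting appropriately — more precisely the adjoint/divergence identity used in \cite{ChuDjuEll19} to derive the Laplace–Beltrami transformation. Concretely, I expect that for each fixed $j$, setting $W := (B^{-T}\nabla_\g\hat u)_j$ (a scalar on $\g$), $\bigl(B^{-T}\nabla_\g W\bigr)_i$ differs from $\tfrac1b\divg_\g\bigl(bB^{-1}(B^{-T}\nabla_\g\hat u)_j\bigr)_i$ only by lower-order curvature terms, and these must be tracked exactly. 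Taking the trace over $i=j$ must reproduce the Laplace–Beltrami formula already established in \cite{ChuDjuEll19}, which gives a useful consistency check and also tells me the form the curvature correction must take.

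The curvature correction is where the mean curvatures $H^{\Gamma_1}\circ\Chi$ and $H^\g$ enter. I would obtain it by writing $B^{-T}\nabla_\g W = \nabla_\g W + (\text{normal component})$ and carefully using that $\nu^{\Gamma_1}\circ\Chi$ appears in $B$; the non-commutativity of the surface Hessian, equation \eqref{eq:SurfaceCommutate}, will be needed to reorganise the second derivatives of $\hat u$, and the identity $\mathcal H^\g = \nabla_\g\nu^\g$ together with $H = \operatorname{tr}\mathcal H$ will convert the leftover first-derivative-of-$B$ terms into the stated $(H^{\Gamma_1}\circ\Chi - H^\g)(\nu_i^{\Gamma_1}\circ\Chi)(B^{-T}\nabla_\g\hat u)_j$ contribution. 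The regularity claim $u\circ\Chi\in H^2(\g)$ follows from the chain rule for Sobolev functions under $C^k$-diffeomorphisms with $k\ge 2$, together with Lemma \ref{lem:FirstOrderTrans} applied twice and the boundedness of $B$, $B^{-1}$ and $b$, $1/b$.

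The main obstacle I anticipate is the bookkeeping of the non-tangential/normal contributions: $B^{-T}$ and $B^{-1}$ mix tangential and normal directions through the $\nu^{\Gamma_1}\circ\Chi\otimes\nu^\g$ term, and the surface Hessian on $\g$ is not symmetric, so one must apply \eqref{eq:SurfaceCommutate} at exactly the right moment to avoid spurious asymmetric terms and to land precisely on the divergence form $\tfrac1b\divg_\g(\cdots)_i$. Checking the computation by contracting $i=j$ against the known Laplace–Beltrami transformation from \cite{ChuDjuEll19} is the safeguard I would rely on to catch sign or curvature-term errors.
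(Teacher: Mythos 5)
Your plan is essentially the paper's own argument: apply Lemma \ref{lem:FirstOrderTrans} a second time to the scalar components $(\nabla_{\Gamma_1}u)_j$, rewrite $b^{li}\D{l}^{\g}\bigl(b^{kj}\D{k}^{\g}\hat u\bigr)$ in divergence form, and reduce the leftover terms — via Jacobi-type identities for $\D{l}^{\g}b$ and $\D{l}^{\g}b^{li}$, the antisymmetrised derivative $\D{l}^{\g}b_{mf}-\D{f}^{\g}b_{ml}$ obtained from the structure of $B$ and the commutator \eqref{eq:SurfaceCommutate}, and relations such as $B^{-T}\nu^{\g}=\nu^{\Gamma_1}\circ\Chi$ — to the stated curvature correction $(H^{\Gamma_1}\circ\Chi-H^{\g})(\nu_i^{\Gamma_1}\circ\Chi)(B^{-T}\nabla_{\g}\hat u)_j$. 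What remains is only the explicit bookkeeping that the paper carries out, so the proposal is correct in outline and follows the same route.
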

\begin{proof}
We write $\hat{u}:= u \circ \Chi$ and where indices are repeated in a product, summation is assumed.
We now make use of the preceding lemma to obtain,
\begin{align*}
	\underline{D}_i^{\Gamma_1} \underline{D}^{\Gamma_1}_j u \circ \Chi
	=&
	b^{li}\D{l}^{\Gamma_0}\left( b^{kj} \D{k}^{\Gamma_0} \hat{u} \right).
\end{align*}
We now put this into something similar to a divergence form,
\begin{align*}
	\underline{D}_i^{\Gamma_1} \underline{D}_j^{\Gamma_1} u \circ \Chi
	=&
	\frac{1}{b} \D{l}^{\Gamma_0}\left( b b^{li} b^{kj}\D{k}^{\Gamma_0}\hat{u} \right) 
	- \frac{1}{b} \D{l}^{\Gamma_0}\left( b\right) b^{li} b^{kj}\D{k}^{\Gamma_0}\hat{u} - \D{l}^{\Gamma_0}\left( b^{li}\right) b^{kj}\D{k}^{\Gamma_0}\hat{u}.
\end{align*}
In \cite{ChuDjuEll19}, it is calculated
\[
	\D{l}^{\Gamma_0}b^{li} = -b^{lm}\D{l}^{\Gamma_0}b_{mf}b^{fi},~~ \frac{1}{b}\D{l}^{\Gamma_0}b = b^{fm} \D{l}^{\Gamma_0}b_{mf},
\]
inserting these into the above gives,
\begin{equation*}
	\underline{D}_i^{\Gamma_1} \underline{D}_j^{\Gamma_1} u \circ \Chi
	=
	\frac{1}{b} \D{l}^{\Gamma_0}\left( b b^{li} b^{kj}\D{k}^{\Gamma_0}\hat{u} \right) 
	- b^{fm}\D{l}^{\Gamma_0}b_{mf} b^{li} b^{kj}\D{k}^{\Gamma_0}\hat{u} + b^{lm}\D{l}^{\Gamma_0} b_{mf} b^{fi} b^{kj}\D{k}^{\Gamma_0}\hat{u}.
\end{equation*}
Since we are summing over $f,\,k,\,l$ and $m$ in the above, it is possible to swap the indices, in particular we swap $f$ and $l$ in the second term.
We now consider the terms\begin{equation}\label{eq:HessianIntermediateEquation}
\begin{split}
	b^{lm}\D{l}^{\Gamma_0} b_{mf} b^{fi} b^{kj}\D{k}^{\Gamma_0}\hat{u} -& b^{lm}\D{f}^{\Gamma_0}b_{ml} b^{fi} b^{kj}\D{k}^{\Gamma_0}\hat{u}
	\\&=
	b^{lm}b^{fi} \left( b^{kj}\D{k}^{\Gamma_0} \hat{u}\right) \left( \D{l}^{\Gamma_0} b_{mf} - \D{f}^{\Gamma_0} b_{ml} \right).
	\end{split}
\end{equation}
In order to simplify this, we will use the definition of $B$ and swap the order of derivatives.
As in \cite{ChuDjuEll19}, one calculates
\begin{align*}
	\D{l}^{\Gamma_0}b_{mf} - \D{f}^{\Gamma_0}b_{ml}
	=&
	\left( \D{l}^{\Gamma_0}(\nu_m^{\Gamma_1} \circ \Chi ) - \left(\mathcal{H}^\g \nabla_\g \Chi_m \right)_l \right)\nu_f^\g
	\\
	&+
	\left(\left(\mathcal{H}^\g \nabla_\g \Chi_m \right)_f  - \D{f}^{\Gamma_0}(\nu_m^{\Gamma_1} \circ \Chi) \right)\nu_l^\g.
\end{align*}
We now use this to simplify \eqref{eq:HessianIntermediateEquation}.
We will make use of the relation $b^{ki}\nu_k^\g = \nu_i^{\Gamma_1} \circ \Chi$.
We calculate each part,
\begin{align*}
b^{lm}b^{fi}\D{l}^\g (\nu_m^{\Gamma_1}\circ \Chi)\nu_f^\g
=&
(\nu_i^{\Gamma_1}\circ \Chi) (B^{-T}(\nabla_{\Gamma_1}\nu_m) \circ \Chi)_m
\\
=&
(H^{\Gamma_1} \nu_i^{\Gamma_1}) \circ \Chi,
\end{align*}
\begin{align*}
b^{lm}b^{fi} (\mathcal{H}^\g \nabla_\g \Chi_m)_l \nu_f^\g
=&
b^{lm}b^{fi}\mathcal{H}^\g_{lk}\D{k}^\g \Chi_m \nu_f^\g
\\
=&
b^{lm}b^{fi}\mathcal{H}^\g_{lk}b_{mk} \nu_f^\g
\\
=&
H^\g (\nu^{\Gamma_1}_i \circ \Chi),
\end{align*}
\begin{align*}
b^{lm}b^{fi} (\mathcal{H}^\g \nabla_\g \Chi_m)_f \nu_l^\g
=&
b^{lm}b^{fi} \mathcal{H}^\g_{fk} \D{k}^\g \Chi_m \nu_l^\g
\\
=&
b^{lm}b^{fi} \mathcal{H}^\g_{fk} b_{mk} \nu_l^\g
\\
=&
b^{fi} \mathcal{H}^\g_{fl} \nu_l^\g
=
0,
\end{align*}
\begin{align*}
b^{lm}b^{fi} (\nu^{\Gamma_1}_m\circ \Chi)\nu_l^{\Gamma_1}
=&
(\nu_m^{\Gamma_1}\circ \Chi)(B^{-T} \nabla_\g (\nu^{\Gamma_1} \circ \Chi) )_i
\\
=&
(\nu_m^{\Gamma_1}\circ \Chi)\mathcal{H}^{\Gamma_1}_{mi}\circ \Chi
=0.
\end{align*}
This then gives
\[
	b^{lm}b^{fi} \left( b^{kj}\D{k}^{\Gamma_0} \hat{u}\right) \left( \D{l}^{\Gamma_0} b_{mf} - \D{f}^{\Gamma_0} b_{ml} \right)
	=
	((H^{\Gamma_1} \circ \Chi) -H^\g) (\nu_i^{\Gamma_1}\circ \Chi) (B^{-T} \nabla_\g \hat{u})_j,
\]
which completes the result.
\end{proof}

\begin{remark}
	By taking the trace of $\underline{D}_{\Gamma_1}^2 u \circ \Chi$, one obtains
	\[
		(\Delta_{\Gamma_1} u) \circ \Chi
		=
		\frac{1}{b} \Div_\g ( b G_\g^{-1} \nabla_{\Gamma_0} (u\circ \Chi)).
	\]
\end{remark}


\section{Implicit function theorem}
\label{sec:ImplicitFunctionTheorem}
We give the version of the implicit function theorem we use in Theorem \ref{thm:applicationOfIFT}.
The result is taken from \cite[Theorem 7.13-1]{Cia13}.
\begin{theorem}\label{Thm:ImplictFnThm}
	Let $\mathcal X$ be a normed vector space, $\mathcal Y$ and $\mathcal Z$ Banach spaces with $\Omega\subset \mathcal X\times \mathcal Y$ open with $(a,b) \in \Omega$.
	Let $f\in C(\Omega;\mathcal Z)$ with $f(a,b)=0$,
	$\frac{\partial f}{\partial y}(x,y) \in \mathcal{L}(\mathcal Y;\mathcal Z)$ exists at all points $(x,y) \in \Omega$ and $\frac{\partial f}{\partial y} \in C(\Omega;\mathcal{L}(\mathcal Y;\mathcal Z)),$
	$\frac{\partial f}{\partial y}(a,b)$ is a bijection, so that
	$\left(\frac{\partial f}{\partial y}(a,b)\right)^{-1}\in \mathcal{L}(\mathcal Z;\mathcal Y)$.
	\begin{enumerate}
	\item
	Then there is an open neighbourhood $V$ of $a$ in $\mathcal X$, a neighbourhood $W$ of $b$ in $\mathcal Y$ and $g \in C(V;W)$ such that $V\times W\subset \Omega$ and 
	$\{(x,y)\in V\times W: f(x,y)=0\}=\{(x,y) \in V\times W : y= g(x)\}$.
	\item
	Assume in addition that $f$ is differentiable at $(a,b) \in \Omega$.
	Then $g$ is differentiable at $a$ and
	\[
		g'(a) = -\left(\frac{\partial f}{\partial y}(a,b)\right)^{-1} \frac{\partial f}{\partial x}(a,b) \in \mathcal{L}(\mathcal X;\mathcal Y).
	\]
	\item
	Assume in addition that $f \in C^k(\Omega;\mathcal Z)$ for some $k\geq 1$.
	Then there is an open neighbourhood $\tilde{V}\subset V$ of $a$ in $\mathcal X$ and neighbourhood $\tilde{W}\subset W$ of $b$ in $\mathcal Y$ such that
	$\frac{\partial f}{\partial y}(x,y)\in \mathcal{L}(\mathcal Y;\mathcal Z)$ is a bijection, so that $\left(\frac{\partial f}{\partial y}(x,y)\right)^{-1}\in \mathcal{L}(\mathcal Z;\mathcal Y)$ at each $(x,y) \in \tilde{V}\times \tilde W$,
	$g\in C^k(\tilde{V};\mathcal Y)$,
	$g'(x) = - \left(\frac{\partial f}{\partial y}(x,g(x)) \right)^{-1}\frac{\partial f}{\partial x}(x,g(x)) \in \mathcal{L}(\mathcal X;\mathcal Y)$ for each $x \in \tilde{V}$.
	\end{enumerate}
\end{theorem}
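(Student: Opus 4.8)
The statement is the Banach-space implicit function theorem, which is entirely classical; in the present excerpt it is quoted verbatim from \cite[Theorem 7.13-1]{Cia13}, so in practice one simply cites that reference. Should a self-contained argument be wanted, the plan is to reduce to a parametrised fixed-point problem and invoke the Banach contraction principle. First I would translate coordinates so that $(a,b)=(0,0)$, set $A:=\frac{\partial f}{\partial y}(0,0)\in\mathcal L(\mathcal Y;\mathcal Z)$ (invertible by hypothesis), and define $\Phi(x,y):=y-A^{-1}f(x,y)$, so that for fixed $x$ the equation $f(x,y)=0$ is equivalent to $y$ being a fixed point of $\Phi(x,\cdot)$. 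Since $\frac{\partial\Phi}{\partial y}(x,y)=I-A^{-1}\frac{\partial f}{\partial y}(x,y)$ is continuous and vanishes at $(0,0)$, one may pick a closed ball $W=\bar B_\rho(0)\subset\mathcal Y$ and a neighbourhood of $0$ in $\mathcal X$ on which $\|\frac{\partial\Phi}{\partial y}\|\le\tfrac12$; the mean value inequality on the convex set $W$ then makes $\Phi(x,\cdot)$ a $\tfrac12$-contraction uniformly in $x$. Shrinking to a neighbourhood $V$ of $0$ on which $\|\Phi(x,0)\|=\|A^{-1}f(x,0)\|\le\rho/2$ (possible as $f$ is continuous with $f(0,0)=0$) forces $\Phi(x,\cdot)$ to map $W$ into itself, and the uniform contraction mapping theorem yields a unique $g(x)\in W$ with $\Phi(x,g(x))=g(x)$, i.e.\ $f(x,g(x))=0$; this is the set equality of part (1), with $g(0)=0$. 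Continuity of $g$ follows from $\|g(x_1)-g(x_2)\|\le 2\|A^{-1}(f(x_1,g(x_2))-f(x_2,g(x_2)))\|$, obtained by writing $g(x_1)-g(x_2)=\Phi(x_1,g(x_1))-\Phi(x_2,g(x_2))$, splitting off the $y$-argument and using the contraction bound.

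For part (2), assume $f$ is differentiable at $(0,0)$ and put $L:=-A^{-1}\frac{\partial f}{\partial x}(0,0)$. Expanding $0=f(x,g(x))=\frac{\partial f}{\partial x}(0,0)x+Ag(x)+r(x,g(x))$ with $r(x,y)=o(\|x\|+\|y\|)$ gives $g(x)=Lx-A^{-1}r(x,g(x))$, so it remains to show $A^{-1}r(x,g(x))=o(\|x\|)$. Differentiability of $f$ at $(0,0)$ gives $\|f(x,0)\|\le C\|x\|$ near $0$, whence the continuity estimate above yields $\|g(x)\|\le 2\|A^{-1}f(x,0)\|\le C'\|x\|$; inserting this into the bound for $r$ shows $\|r(x,g(x))\|\le\varepsilon(\|x\|+\|g(x)\|)=o(\|x\|)$, so $g$ is differentiable at $0$ with $g'(0)=L$.

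Part (3) is a bootstrap. If $f\in C^k$ with $k\ge1$ then $\frac{\partial f}{\partial y}$ is continuous and takes the invertible value $A$ at $(0,0)$; since the invertible operators are open and inversion is smooth there, one may shrink $V,W$ to $\tilde V,\tilde W$ on which $\frac{\partial f}{\partial y}(x,y)$ is invertible with uniformly bounded inverse. The hypotheses of part (2) then hold at every $(x_0,g(x_0))$ with $x_0\in\tilde V$ (using uniqueness to identify the local solution map with $g$), so $g$ is differentiable on $\tilde V$ with $g'(x)=-\bigl(\frac{\partial f}{\partial y}(x,g(x))\bigr)^{-1}\frac{\partial f}{\partial x}(x,g(x))$. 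Reading this as a composition of the continuous maps $x\mapsto(x,g(x))$, $\frac{\partial f}{\partial x}$, $\frac{\partial f}{\partial y}$, operator inversion and operator composition shows $g'\in C(\tilde V;\mathcal L(\mathcal X;\mathcal Y))$, hence $g\in C^1$; feeding $g\in C^j$ back into the same formula (using $\frac{\partial f}{\partial x},\frac{\partial f}{\partial y}\in C^{k-1}$ and the smoothness of inversion and composition) improves $g$ to $C^{\min(j+1,k)}$, and iterating gives $g\in C^k$. I expect the only real care to be the bookkeeping of the nested neighbourhoods — arranging the contraction constant, the self-mapping property, the invertibility of $\frac{\partial f}{\partial y}$ and the various $o(\cdot)$ remainders all on one common $\tilde V\times\tilde W$ — rather than any single deep step; note that completeness of $\mathcal X$ is never used, as every fixed point is taken in the Banach space $\mathcal Y$.
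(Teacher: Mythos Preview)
Your proposal is correct. The paper does not prove this theorem at all: it is stated in the appendix purely for reference and is attributed verbatim to \cite[Theorem 7.13-1]{Cia13}, exactly as you observe in your opening sentence. Your additional self-contained sketch via the uniform contraction mapping principle is the standard argument and is sound, but it goes beyond what the paper itself supplies.
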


\section{Elliptic regularity}\label{app:EllReg}
We first show, for arbitrary surfaces, that $\Delta_\Gamma u \in W^{1,p}(\Gamma)$ for $p\leq 2$ gives $u \in W^{3,p}(\Gamma)$.

\begin{proposition}\label{app:Prop:EllReg}
	Suppose $u \in H^1(\Gamma)$ with $\Delta_\Gamma u \in W^{1,p}(\Gamma)$ for some $p\in(1,2]$ and $\Gamma$ is $C^3$, then there is a $C>0$ independent of $u$ such that for each $i,j=1,2,3$,
	\[
		\|\D{i}\D{j} u \|_{1,p} \leq C \left( \|\D{j} \Delta_\Gamma u\|_{0,p} + \|\Delta_\Gamma u\|_{0,2} + \|\nabla_\Gamma u\|_{0,2} \right).
	\]
\end{proposition}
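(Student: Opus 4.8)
The plan is to reduce the statement to the classical interior $L^p$ elliptic estimate via a partition of unity, and to recover the sharp dependence on the single component $\|\D j \Delta_\Gamma u\|_{0,p}$ by differentiating the equation once in the $j$-th direction rather than estimating the full $W^{3,p}$ norm directly.

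First I would make two harmless reductions. Since $\D i\D j$ annihilates constants and since $\nabla_\Gamma u$ and $\Delta_\Gamma u$ are unchanged under $u\mapsto u-m$ with $m:=\frac1{|\Gamma|}\int_\Gamma u$, we may assume $\int_\Gamma u=0$; the Poincar\'e inequality on the compact surface $\Gamma$ then gives $\|u\|_{0,2}\le C\|\nabla_\Gamma u\|_{0,2}$, and since $|\Gamma|<\infty$ and $p\le 2$ we have $\|f\|_{0,p}\le C\|f\|_{0,2}$ by H\"older. Hence it suffices to prove
\[
  \|\D i\D j u\|_{1,p}\le C\big(\|\D j\Delta_\Gamma u\|_{0,p}+\|\Delta_\Gamma u\|_{0,2}+\|u\|_{0,2}\big).
\]
Next I would fix a finite atlas of $C^3$ coordinate patches with a subordinate smooth partition of unity $\{\phi_\alpha\}$; in each chart the pull-back of $\Delta_\Gamma$ is a uniformly elliptic operator $a^{kl}_\alpha\partial_k\partial_l+b^k_\alpha\partial_k$ with $a^{kl}_\alpha\in C^2$ and $b^k_\alpha\in C^1$ (these arise from the $C^2$ metric, cf.\ \cite{DziEll13}). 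Since $\Delta_\Gamma u\in W^{1,p}\hookrightarrow L^p$ and $u\in H^1\hookrightarrow L^p$ (using $p\le2$, $|\Gamma|<\infty$), writing $\Delta_\Gamma(\phi_\alpha u)=\phi_\alpha\Delta_\Gamma u+2\nabla_\Gamma\phi_\alpha\cdot\nabla_\Gamma u+u\,\Delta_\Gamma\phi_\alpha\in L^p$, the classical interior $L^p$ (Calder\'on--Zygmund) estimate in each chart, summed over $\alpha$, gives $u\in W^{2,p}(\Gamma)$ with $\|u\|_{2,p}\le C(\|\Delta_\Gamma u\|_{0,p}+\|u\|_{0,p})\le C(\|\Delta_\Gamma u\|_{0,2}+\|u\|_{0,2})$.

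Now $w:=\D j u\in W^{1,p}(\Gamma)$, and $D_\Gamma^2 u\in L^p(\Gamma)$. Using the commutation relation \eqref{eq:SurfaceCommutate} twice to move $\D j$ past $\Delta_\Gamma=\D i\D i$, one obtains, in the weak sense,
\[
  \Delta_\Gamma w=\D j(\Delta_\Gamma u)+R_j,
\]
where $R_j$ is a sum of terms of the form (bounded coefficient)$\,\times\,(\nabla_\Gamma u$ or $D_\Gamma^2 u)$, the coefficients being built algebraically from $\nu$, $\mathcal H=\nabla_\Gamma\nu$ and $\nabla_\Gamma\mathcal H$; because $\Gamma$ is $C^3$ these coefficients lie in $C^1\hookrightarrow L^\infty$, so $\|R_j\|_{0,p}\le C\|u\|_{2,p}$. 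Hence $\Delta_\Gamma w\in L^p$ with $\|\Delta_\Gamma w\|_{0,p}\le\|\D j\Delta_\Gamma u\|_{0,p}+C\|u\|_{2,p}$, and a second application of the interior $L^p$ estimate (again via the charts and cut-offs $\phi_\alpha w$) yields $w\in W^{2,p}(\Gamma)$ with $\|w\|_{2,p}\le C(\|\Delta_\Gamma w\|_{0,p}+\|w\|_{0,p})$. Since $\|\D i\D j u\|_{1,p}=\|\D i w\|_{1,p}\le\|w\|_{2,p}$ and $\|w\|_{0,p}\le\|u\|_{2,p}$, combining with the previous paragraph and the initial reduction gives the claim.

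The main obstacle is the careful identification of the remainder $R_j$ and the verification that none of its terms requires more than two weak derivatives of $u$ or more than $C^1$ control of the ambient geometry — it is exactly the hypothesis $\Gamma\in C^3$ that keeps $\nabla_\Gamma\mathcal H$ bounded. A secondary technical point is that, as $u$ is a priori only in $W^{2,p}$, the identity $\Delta_\Gamma(\D j u)=\D j\Delta_\Gamma u+R_j$ must be read distributionally; this is obtained either by first proving it for smooth $u$ and approximating, or directly by a difference-quotient argument in each chart. The localization and the summation of the local estimates are routine once the atlas is fixed, and the nontrivial kernel of $\Delta_\Gamma$ on the closed surface causes no difficulty, since only interior regularity estimates (not solvability) are invoked.
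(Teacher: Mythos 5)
Your argument is essentially correct, but it follows a genuinely different route from the paper. You localize to a $C^3$ atlas, apply interior Calder\'on--Zygmund estimates twice (first to $u$ to get the $W^{2,p}$ bound, then to $w=\D{j}u$ after commuting $\D{j}$ past $\Delta_\Gamma$ via \eqref{eq:SurfaceCommutate} to produce a curvature remainder $R_j$ of at most second order), and you handle the distributional meaning of the commuted equation by approximation or difference quotients. The paper instead works globally and intrinsically: it invokes a $W^{1,p}$--$W^{1,q}$ inf-sup inequality from \cite{EllFriHob19}, bounds the zeroth-order part $\|\D{i}\D{j}u\|_{0,p}$ by $\|\D{i}\D{j}u\|_{0,2}$ and the standard $L^2$ identity $\|D_\Gamma^2 u\|_{0,2}\le \|\Delta_\Gamma u\|_{0,2}+C\|\nabla_\Gamma u\|_{0,2}$, and then estimates $\int_\Gamma \nabla_\Gamma \D{i}\D{j}u\cdot\nabla_\Gamma\eta$ for $\eta\in W^{1,q}$ by a single integration-by-parts/derivative-swapping identity whose extra terms carry $\mathcal{H}$ and $\nabla_\Gamma\mathcal{H}$, finishing with H\"older — no charts, no CZ theory, and the commutation happens at the weak (dual) level against the test function rather than at the level of the equation for $\D{j}u$. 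What each buys: the paper's duality argument is short, stays in the surface-intrinsic calculus of \cite{DziEll13}, and reuses machinery already established for their numerics, while quietly treating the bound as an a priori estimate; your proof is more classical and self-contained, yields the intermediate $W^{2,p}$ regularity explicitly, and is where you correctly flag the only delicate points — justifying $\Delta_\Gamma(\D{j}u)=\D{j}\Delta_\Gamma u+R_j$ distributionally and upgrading a priori estimates to genuine regularity. One cosmetic gloss: the localized estimates $\|\phi_\alpha v\|_{2,p}\le C(\|\Delta_\Gamma(\phi_\alpha v)\|_{0,p}+\|\phi_\alpha v\|_{0,p})$ produce an extra $\|\nabla_\Gamma v\|_{0,p}$ from the cut-off commutator, which you should either absorb by interpolation or keep, noting it is already controlled by the right-hand side; this is routine and does not affect the conclusion.
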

\begin{proof}
	We make use of the following inf-sup condition, shown in \cite{EllFriHob19}:
	\[
		\exists \gamma>0 : \gamma\|\xi\|_{1,p} \leq \sup_{\eta \in W^{1,q}(\Gamma)} \frac{\int_\Gamma \nabla_\Gamma \eta \cdot \nabla_\Gamma \xi + \eta \xi}{\|\eta\|_{1,q}}\quad\forall \xi \in W^{1,p}(\Gamma).
	\]
	By the fact that $\Gamma$ has finite measure, it holds that $\|\D{i}\D{j} u \|_{0,p} \leq C \|\D{i}\D{j}u\|_{0,2}$ which we know is controlled by $\|\Delta_\Gamma u \|_{0,2} + \sqrt{\|\mathcal{H}H - 2\mathcal{H}^2\|_{0,\infty}}\|\nabla_\Gamma u\|_{0,2}$, \cite{DziEll13}.
	It is then sufficient to show that $\int_\Gamma \nabla_\Gamma \D{i}\D{j} u \cdot\nabla_\Gamma \eta$ is bounded appropriately.
	One may calculate
	\begin{align*}
		\int_\Gamma \nabla_\Gamma \D{i}\D{j} u \cdot\nabla_\Gamma \eta
		=
		\int_\Gamma &\D{j} \Delta_\Gamma u \D{i} \eta
		\\&+
		\left( \left( \mathcal{H}\nabla_\Gamma \D{k} u\right)_j\nu_k- \left(\mathcal{H}\nabla_\Gamma \D{k} u \right)_k\nu_i  -\D{k}\left[ \left(\mathcal{H}\nabla_\Gamma u \right)_k\nu_j\right]\right) \D{i}\eta
		\\
		&-\D{k}\D{j}u \left(\mathcal{H}\nabla_\Gamma \eta\right)_k \nu_i - \left( \mathcal{H}\nabla_\Gamma \D{j} u\right)_k\nu_i \D{k}\eta.
	\end{align*}
	This follows from repeatedly applying integration by parts and swapping the order of derivatives.
	Applying H\"older's inequality, the result immediately follows.
\end{proof}

\begin{proposition}
	Let $u \in H^2(\Gamma)$ be the unique solution of Problem \ref{prob:SingleParticleEnergyMinimisation}, then it holds that for any $p<2$,  $u\in W^{3,p}(\Gamma)$.
\end{proposition}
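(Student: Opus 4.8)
The plan is to characterise $u$ through the Lagrange multipliers associated with the finitely many linear constraints, and then to bootstrap the regularity of $u$ by a duality argument that feeds into the elliptic estimate of Proposition \ref{app:Prop:EllReg}.

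First, since $u$ minimises the quadratic functional $J$ over $H^2(\Gamma)$ subject to the constraints $\int_\Gamma v = 0$ and $v|_\cee = Z$, which together define a surjection $H^2(\Gamma)\to\R\times\R^K$ (this is where $K\geq4$ and non-coplanarity enter, cf.\ \cite[Theorem 5.1]{EllHer20-A}), standard Karush--Kuhn--Tucker theory for quadratic problems (e.g.\ \cite{ErnGue04}) provides $\bar p\in\R$ and $\lambda\in\R^K$ with
\[
	a(u,v) + \bar p\int_\Gamma v + \lambda\cdot v|_\cee = 0 \qquad \forall\, v\in H^2(\Gamma).
\]
Next, set $\eta := -\Delta_\Gamma u - \tfrac{2}{R^2}u \in L^2(\Gamma)$. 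Integrating by parts in the definition \eqref{eq:BilinearForm} of $a$ gives $a(u,v) = \int_\Gamma(-\kappa\Delta_\Gamma v + \sigma v)\,\eta$ for every $v\in H^2(\Gamma)$, hence
\[
	\int_\Gamma(-\kappa\Delta_\Gamma v + \sigma v)\,\eta = -\lambda\cdot v|_\cee - \bar p\int_\Gamma v \qquad \forall\, v\in H^2(\Gamma).
\]

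Now fix $q>2$ and let $G$ denote the solution operator of the Helmholtz-type problem $-\kappa\Delta_\Gamma v + \sigma v = \phi$ on $\Gamma$. A local elliptic regularity argument for this operator, together with the two-dimensional embedding $W^{1,q}(\Gamma)\hookrightarrow C(\Gamma)$ \cite{AdaFou03}, yields the bound $\|G\phi\|_{0,\infty}\leq C\|\phi\|_{-1,q}$; it is precisely here that dimension two and the admissibility of the point evaluations $v|_\cee$ are used, and this is the main technical obstacle. Granting this, for $\phi\in C^\infty(\Gamma)$ and $v=G\phi$ we compute
\[
	\Bigl|\int_\Gamma \phi\,\eta\Bigr| = \Bigl|\int_\Gamma(-\kappa\Delta_\Gamma v + \sigma v)\,\eta\Bigr| = \bigl|\lambda\cdot v|_\cee + \bar p\textstyle\int_\Gamma v\bigr| \leq \|\lambda\|_{\R^K}\|v\|_{0,\infty} + |\bar p|\,\|v\|_{0,1} \leq C\|\phi\|_{-1,q}.
\]
By density of $C^\infty(\Gamma)$ in $W^{-1,q}(\Gamma)$ this shows that $\eta$ induces a bounded linear functional on $W^{-1,q}(\Gamma)$, i.e.\ $\eta\in W^{1,q^*}(\Gamma)$ with $q^*=q/(q-1)\in(1,2)$ the conjugate exponent.

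Finally, since $u\in H^2(\Gamma)\subset W^{1,q^*}(\Gamma)$, we have $-\Delta_\Gamma u = \eta + \tfrac{2}{R^2}u\in W^{1,q^*}(\Gamma)$, so Proposition \ref{app:Prop:EllReg} applies with $p=q^*$ and gives $u\in W^{3,q^*}(\Gamma)$. As $q>2$ was arbitrary, $q^*$ exhausts $(1,2)$, and the claim holds for every $p<2$.
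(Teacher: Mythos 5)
Your proposal is correct and follows essentially the same route as the paper's own proof: Lagrange multipliers for the finitely many constraints, the substitution $\eta=-\Delta_\Gamma u-\tfrac{2}{R^2}u$, duality against the Helmholtz solution operator with the bound $\|v\|_{0,\infty}\leq C\|\phi\|_{-1,q}$ (which the paper likewise only sketches, citing local elliptic regularity), and then Proposition \ref{app:Prop:EllReg} to conclude $u\in W^{3,q^*}(\Gamma)$ for every $q>2$.
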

\begin{proof}
By \cite[Theorem 2.34]{ErnGue04} and the arguments presented in \cite[Section 5]{EllHer20-A}, it is clear that 
	there is $\bar{p}\in \R$ and $\lambda \in \R^K$ such that
	\[		a(u,v) + \bar{p}\int_\Gamma v + \lambda \cdot v|_{\mathcal{C}} =0~~ \forall v \in H^2(\Gamma).
	\]
	Let $\eta:= -\Delta_\Gamma u - \frac{2}{R^2}u \in L^2(\Gamma)$, then for any $v \in H^2(\Gamma)$,
	\[
		a(u,v) = \int_\Gamma (-\kappa \Delta_\Gamma v + \sigma v)\eta = -\lambda \cdot v|_{\mathcal{C}} -\bar{p}\int_\Gamma v.
	\]
	Let $\phi \in C^\infty(\Gamma)$ and consider the inverse Laplace type map $G\colon L^2(\Gamma) \to H^2(\Gamma)$ such that $G\colon \phi \mapsto v$ where $-\kappa\Delta_\Gamma v + \sigma v = \phi$.
	Via a local argument, it may be seen that for any $q>2$, $\|v\|_{0,\infty} \leq C\|\phi\|_{-1,q}$ \cite{Nec11}.
	Hence
	\begin{align*}
		\langle \phi,\eta \rangle
		&=
		\int_\Gamma \phi \eta
		=
		\int_\Gamma (-\kappa\Delta_\Gamma v + \sigma v)\eta
		\\
		&=
		-\lambda\cdot v|_{\mathcal{C}} - \bar{p}\int_\Gamma v
		\\
		&\leq \|\lambda\|_{\R^M} \|v\|_{0,\infty} + |\bar{p}| \|v\|_{0,1}
		\\
		&\leq C \|\phi\|_{-1,q}.
	\end{align*}
	Thus we have shown that $\eta$ represents a bounded linear operator on $W^{-1,q}(\Gamma)$, thus we have shown that $-\Delta_\Gamma u -\frac{2}{R^2}u \in W^{1,q^*}(\Gamma)$.
	In particular, by Proposition \ref{app:Prop:EllReg}, it holds that $u \in W^{3,q^*}(\Gamma)$.
	Since $q^*<2$ is arbitrary, the result is complete.
\end{proof}

\bibliographystyle{siam}

%
\end{document}